\documentclass[12pt]{amsart}
\usepackage{hyperref}
\usepackage{indentfirst}
\usepackage{appendix}
\usepackage{amsmath,amsthm,amsfonts,amssymb,stmaryrd,mathrsfs,enumitem}
\usepackage{latexsym}
\usepackage{a4,color,palatino,fancyhdr}
\usepackage{graphicx}
\usepackage{float}  
\usepackage[small, bf, margin=90pt, tableposition=bottom]{caption}
\RequirePackage{amssymb}
\RequirePackage[T1]{fontenc}
\usepackage{amscd}
\setlength{\oddsidemargin}{0cm}
\setlength{\evensidemargin}{0cm}
\setlength{\textheight}{22cm}
\setlength{\textwidth}{16cm}
\usepackage{xypic}
\input{xy}
\xyoption{all}
\xyoption{poly}
\usepackage[all]{xy}
\usepackage{tikz}
\usepackage{graphicx}

\usepackage{amsmath}
\usepackage{tikz}
\usepackage{mathtools}


\newtheorem{theorem}{Theorem}[section]
\newtheorem{proposition}[theorem]{Proposition}
\newtheorem{definition}[theorem]{Definition}
\newtheorem{lemma}[theorem]{Lemma}
\newtheorem{corollary}[theorem]{Corollary}
\newtheorem{problem}{Problem}
\newtheorem{remark}{Remark}
\newtheorem{example}{Example}[section]

\numberwithin{equation}{section}


\def\cl#1{{\langle #1 \rangle}}

\def\mod{{\mathrm {mod}}}

\newcommand\N{{\mathbb{N}}}
\newcommand\Z{{\mathbb{Z}}}
\newcommand\Q{{\mathbb{Q}}}

\newcommand\R{{\mathbb{R}}}
\newcommand\CC{{\mathbb{C}}}

\newcommand\poincare{Poincar\'{e}}

\newcommand\bI{\mathbf{I}}

\newcommand {\be}{\begin{equation}}
\newcommand {\ee}{\end{equation}}

\newcommand\cooloverbrace[2]{\mathrlap{\smash{\overbrace{\phantom{\begin{matrix}#2\end{matrix}}}^{\mbox{$#1$}}}}#2}

\newcommand\coolrightbrace[2]{\left.\vphantom{\begin{matrix}#1\end{matrix}}\right\}#2}

\begin{document}

\title{Twisted Milnor hypersurfaces I.}

 \author{Jingfang Lian }
\address{School of Mathematical Sciences, Fudan University, Shanghai, 200433, P.R. China. }
\email{18110180033@fudan.edu.cn}

\author{Fei Han}
\address{Department of Mathematics,
National University of Singapore, Singapore 119076}
\email{mathanf@nus.edu.sg}

\author{Hao Li}
\address{Department of Mathematics,
National University of Singapore, Singapore 119076}
\email{matlihao@nus.edu.sg}

 \author{Zhi L\"u}
\address{School of Mathematical Sciences, Fudan University, Shanghai, 200433, P.R. China. }
\email{zlu@fudan.edu.cn}

\keywords{Twisted Milnor hypersurface, $\widehat{A}$-genus, $\alpha$ invariant, positive scalar curvature.}



\maketitle

\begin{abstract}
In this paper, we study {\bf twisted Milnor hypersurfaces} and compute their $\hat A$-genus and Atiyah-Singer-Milnor $\alpha$-invariant. Our tool to compute the $\alpha$-invariant is Zhang's analytic Rokhlin congruence formula. We also give some applications about group actions and metrics of  positive scalar curvature on twisted Milnor hypersurfaces.

\end{abstract}

\tableofcontents
\section{Introduction}


\subsection{Twisted Milnor hypersurfaces} Denote by $H_{n_1,n_2}$ the Milnor hypersurface, which is the smooth hypersurface in $\CC P^{n_1}\times\CC P^{n_2}$, the product  of two complex projective spaces,  defined by the equation
\be \label{eqnforMilnor}
x_0y_0+\cdots+x_ky_k=0,\ k=\min\{n_1, n_2\},
\ee
where $[x_0: x_1: \cdots: x_{n_1}]$ and $[y_0: y_1: \cdots: y_{n_2}]$ are the homogeneous coordinates on $\CC P^{n_1}$ and $\CC P^{n_2}$ respectively.
Then $H_{n_1,n_2}$ is Poincar\'e dual to the cohomology class $u+v\in H^2(\CC P^{n_1}\times\CC P^{n_2})$, where  $u$ and $v$ are the generators of $H^*(\CC P^{n_1}, \Z)$ and $H^*(\CC P^{n_2}, \Z)$ respectively.  It is well-known that Milnor hypersurfaces can be used as generators in the unitary bordism ring $\Omega^{U}$ (cf \cite[Section 4.1]{HBJ94}).


In this paper, we consider a generalization of the Milnor hypersurfaces, namely hypersurfaces in certain classes of quasitoric manifolds with polytope being the product of two simplicies $\Delta^{n_1}\times\Delta^{n_2}$ and characteristic matrices being block lower triangular rather than  block diagonal. For details and the background of quasitoric manifold, see Section \ref{toric}.

The quasitoric manifold discussed here can also be regarded as the projective bundle over $\CC P^{n_{1}}$ with fiber $\CC P^{n_{2}}$, i.e.
$$V=\CC P(\eta^{\otimes i_{1}}\oplus\cdots\oplus\eta^{\otimes i_{n_{2}}}\oplus\underline{\CC})\rightarrow \CC P^{n_1},$$
where $\eta$ is the tautological line bundle over $\CC P^{n_{1}}$ and $\underline{\CC}$ is the trivial line bundle. Let $\gamma$ be the vertical tautological line bundle over $V$. Let $u=c_1(\overline{\eta}), v=c_1(\overline{\gamma})\in H^2(V, \Z)$ be the first Chern classes of $\overline{\eta}$ and $\overline{\gamma}$, the complex conjugations of $\eta$ and $\gamma$ respectively. Denote $\mathbf{I}=(i_{1},\cdots ,i_{n_{2}})$.

\begin{definition} Denote by $H_{n_1,n_2}^{\bI}(d_1,d_2)$ a smooth hypersurface in $V$ \poincare\ dual to $d_1u+d_2v$, which we call a {\bf twisted Milnor hypersurface}.
\end{definition}

\begin{remark} {\ }
\begin{enumerate} \item When $\bI=\mathbf{0}=(0, \cdots, 0)$, $H_{n_1,n_2}^{\mathbf{0}}( 1, 1)$ is the classical Milnor hypersurface $H_{n_1, n_2}$.
\vskip.2cm
 \item When $\bI=\mathbf{0}$,\ $n_1\leq n_2$, the smooth hypersurface $H_{n_1,n_2}^{\mathbf{0}}(d_1,1)$ has a model as the zero locus of equation
$$x_0^{d_1}y_0+x_1^{d_1}y_1+\cdots x_{n_1}^{d_1}y_{n_1}=0, $$
which is the generalization of equation (\ref{eqnforMilnor}). However, for general  twisted Milnor hypersurface $H_{n_1,n_2}^{\bI}(d_1,d_2)$, such algebraic geometry models do not exist.

\vskip.2cm

\item $V=\CC P(\eta^{\otimes i_{1}}\oplus\cdots\oplus\eta^{\otimes i_{n_{2}}}\oplus\underline{\CC})$ is a quasitoric manifold over the product $\Delta^{n_1}\times \Delta^{n_2}$ of two simplices. In particular, by the classification of two stage generalized Bott manifolds in \cite{CMS},  $V$ can represent all two stage Bott towers up to diffeomorphism. Furthermore, each $V$ is also a projective toric variety \cite[p. 306]{GK} and when $\bI$ is negative, the complex structures on these projective toric varieties coincide with the natural complex structures on the projective bundles.
\vskip.2cm
 \item When $\bI=(1, 0, \cdots, 0)$, $V$  becomes $L(n_1,n_2)=\CC P(\eta\oplus\underline{\CC}^{n_2})$. It was shown in  \cite{LP} that any generator of the unitary bordism ring $\Omega^{U}$ can be found in $\Z\langle L(n_1,n_2 )\rangle$.
\end{enumerate}
\end{remark}

The main purpose  of this paper is  to study some index theoretical invariants of the twisted Milnor hypersurfaces $H_{n_1,n_2}^{\bI}(d_1,d_2)$. More precisely, we will first pay much attention on the calculations of the $\widehat{A}$-genus and the Atiyah-Milnor-Singer $\alpha$-invariant for $H_{n_1,n_2}^{\bI}(d_1,d_2)$ and then give some applications by applying classical results in geometry.

\subsection{$\widehat{A}$-genus and Atiyah-Milnor-Singer $\alpha$-invariant}
Let $M$ be a $4k$-dimensional closed oriented smooth manifold. Let the formal Chern roots of $TM\otimes \CC$ be $\{\pm  x_j, 1\leq j
\leq 2k\}$. The Hirzebruch $\widehat{A}$-genus is the characteristic number of $M$ defined by
$$ \widehat{A}(M)=\left\langle \prod_{j=1}^{2k} \frac{x_j/2}{\sinh(x_j/2)}, [M] \right\rangle.$$ If the dimension of $M$ is not divisible by 4, define the $\widehat A$-genus of $M$ to be 0. The $\widehat A$-genus
gives a ring homomorphism $\widehat{A}: \Omega_*^{SO}\to \Q.$

Let $M$ be an $n$-dimensional closed smooth spin manifold. The projection map $\pi: M\to pt$ induces an Umkehr homomorphism
$$\pi_!: KO(M) \to KO^{-n}(pt)=KO_n(pt),$$
which is constructed by  using  the  Thom isomorphism for spin bundles in $KO$-theory.
The Atiyah-Milnor-Singer $\alpha$-invariant is defined to be $\alpha(M)=\pi_!(1)$. The $\alpha$-invariant gives a ring homomorphism
$$\alpha: \Omega_*^{spin}\to KO_*(pt)$$
(\cite[\S6, Chapter V]{Bo66}, c.f. \cite[\S16, Chapter III]{LM}).

For spin manifolds, the $\widehat A$-genus and $\alpha$-invariant are  the topological indices of the Atiyah-Singer Dirac operators. By the Bott periodicity, $KO$-theory is 8 periodic. One has
\begin{equation*}
KO_{n}(pt)=\begin{cases}
0, & \mathrm{for} \ n\equiv 3,5,6,7\ \mod\ 8,\\
\Z, &  \mathrm{for}\  n\equiv 0, 4\ \mod\ 8, \\
\Z_2, &  \mathrm{for}\ n\equiv 1, 2\ \mod\ 8,
\end{cases}
\end{equation*}
and
\begin{equation*}
\alpha(M)=\begin{cases}
\widehat{A}(M), & \mathrm{for} \ n\equiv 0\ \mod\ 8,\\
\frac{1}{2}\widehat{A}(M) &  \mathrm{for}\  n\equiv 4\ \mod\ 8.

\end{cases}
\end{equation*}
See  \cite[\S7, Chapter II]{LM} for details.

The $\widehat A$-genus and $\alpha$-invariants have profound applications in geometry. Atiyah and Hirzebruch \cite{AH70} proved that if a compact group acts non-trivially on a compact spin manifold, then the equivariant index of the spin Dirac operator vanishes,
and in particular, the $\widehat A$-genus of the compact manifold vanishes.
Gromov-Lawson \cite{GL} proved that a simply connected closed non-spin manifold of dimension $\geq 5$ always carries a Riemannian metric of positive scalar curvature. In the spin case, it is well known that the $\alpha$-invariant vanishes when the manifolds carry Riemannian metric of positive scalar curvature (due to Lichnerowicz  \cite{Li}  in dimension $4k$ and Hitchin  \cite{Hit} in dimension $8k+1, 8k+2$).
Stolz \cite{Sto}  proved that a simply connected, closed  spin manifold of dimension$\geq 5$ carries a Riemannian metric of positive scalar curvature if and only if the $\alpha$-invariant vanishes.

The $\widehat A$-genus and $\alpha$-invariant have been computed on some classes of manifolds. Brooks \cite{Br83} computed the $\widehat{A}$-genus of complex hypersurfaces and complete intersections in complex projective spaces. Applying his analytic Rokhlin congruence formula established in \cite{Zh93, Zh96R}, Zhang \cite{Zh96E} computed the $\alpha$-invariant of hypersurfaces in complex projective spaces and characterized all the $8k+2$ dimensional hypersurfaces carrying a Riemannian metric of positive scalar curvature. In \cite{FZ}, Feng and  B. H. Zhang  generalized the result in \cite{Zh96E} to $8k+2$ dimensional complete intersections of two hypersurfaces in complex projective spaces. Applying the analytic Rokhlin congruence formula to general complete intersections and using the Selberg-Witten invariant in dimension 4, Fang and Shao \cite{FS} gave the the necessary and sufficient condition for a complete intersection complex projective spaces carrying a Riemannian metric of positive scalar curvature. Recently, Baraglia \cite{Bar} recovered the formula for the $\alpha$-invariant of general complete intersections obtained in \cite{FS} with a different approach.

In this paper, we will compute the $\widehat A$-genus as well as the $\alpha$-invariant of twisted Milnor hypersurfaces and give some applications. The real dimension of twisted Milnor hypersurfaces is always even. We will compute the $\widehat A$-genus without mentioning dimensions (when the dimension is not divisible by 4, the $\widehat A$-genus is automatically 0), and compute the $\alpha$-invariant when the dimension is congruent to 2 mod 8, i.e. when $n_1+n_2\equiv 2\ \mod\, 4$.

In the famous book \cite[\S3.2]{HBJ94}, the $\widehat A$-genus and the elliptic genus of Milnor hypersurfaces have been computed by using the {\bf universal genus}. The universal genus method in \cite{HBJ94} does not work for the computation of the $\widehat A$-genus of twisted Milnor hypersurfaces due to the twistings. We will directly compute it here by combining the characteristic functions and the twisting information. To compute the $\alpha$-invariant, our main tool is Zhang's analytic Rokhlin congruence formula. In the application to the existence of Riemannian metric of positive scalar curvature on twisted Minor hypersurfaces, we express the $\alpha$-invariants by using the dyadic expansion following Zhang \cite{Zh96E}. During the calculation, we discover a very interesting number $A(n,l)$, which is closely related to several classical numbers in number theory. These are summarized in Appendix A.

In a forthcoming paper, we will study the elliptic genus and Witten genus of the twisted Milnor hypersurfaces.

\subsection{Main results}Given $\mathbf{I}=(i_{1},\cdots ,i_{n_{2}})$.
Denote $\sigma_1=\sum_{j=1}^{n_2}i_j.$
Set
\be \label{Fi}
\begin{split}
&F_{n_1,n_2, \bI}(d_1,d_2)\\
=&\sum_{\substack{0\leq r\leq n_2\\ \forall 1\leq j\leq r\\  l_j\geq 1,\sum_{j=1}^{r}l_j\leq n_1,0\leq m_j\leq l_j\\ 1\leq s_1<s_2<\cdots\ s_r \leq n_2}}
(-1)^{\sum_{j=1}^rm_j}\binom{\vec l}{ \vec m}\binom{\frac{d_1+n_1-1+\sigma_1}{2}-\vec s\cdot \vec m}{n_1}\binom{\frac{d_2+n_2-1 }{2}+\sum_{j=1}^rl_j-r}{n_2+\sum_{j=1}^rl_j},
\end{split}
\ee
where  $$\vec l=(l_1, \cdots, l_r), \vec m=(m_1, \cdots, m_r), \vec s=(i_{s_1},\cdots, i_{s_r})$$
and
$$ \binom{\vec l}{\vec m}\coloneqq\binom{l_1}{m_1}\cdots\binom{l_r}{m_r}, \  \vec s \cdot \vec m \coloneqq\sum_{j=1}^ri_{s_j}m_j. $$

\begin{theorem}[ \protect Theorem \ref{Ahat} ] \label{Ahat} For $n_1+n_2\equiv 1\ \mod\ 2$, one has
\be \widehat{A}(H_{n_1,n_2}^{\bI}(d_1,d_2))=F_{n_1,n_2, \bI}(d_1,d_2)-F_{n_1,n_2, \bI}(-d_1 ,-d_2 ).\ee
\end{theorem}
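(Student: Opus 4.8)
The plan is to express $\widehat A(H)$ as a single characteristic number of the ambient space $V$ and then read off the combinatorics. I would begin from the two structural facts about $V$ that belong to the quasitoric description in Section~\ref{toric}. Since $V$ lives over the product of simplices $\Delta^{n_1}\times\Delta^{n_2}$ it has $n_1+n_2+2$ facets, which yields
\[
H^{*}(V;\Z)=\Z[u,v]\big/\big(u^{n_1+1},\ v\cdot{\textstyle\prod_{j=1}^{n_2}}(v-i_ju)\big),\qquad \big\langle u^{n_1}v^{n_2},[V]\big\rangle=1 ,
\]
and, with $\pi:V\to\CC P^{n_1}$ the projection, a stable tangent splitting
\[
TV\oplus\underline{\CC}^2\ \cong\ (\pi^{*}\overline\eta)^{\oplus(n_1+1)}\ \oplus\ \overline\gamma\ \oplus\ \bigoplus_{j=1}^{n_2}\big(\overline\gamma\otimes\pi^{*}\eta^{\otimes i_j}\big),
\]
coming from the $n_1+1$ facets over $\Delta^{n_1}$ (each with $c_1=u$) and the $n_2+1$ facets over $\Delta^{n_2}$ (with $c_1=v$ and $c_1=v-i_ju$). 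Writing $g(x)=\dfrac{x/2}{\sinh(x/2)}$, multiplicativity of the $\widehat A$-class on $K$-theory then gives $\widehat A(TV)=g(u)^{n_1+1}\,g(v)\prod_{j=1}^{n_2}g(v-i_ju)$.

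Next I would invoke adjunction. Since $H$ is \poincare\ dual to $d_1u+d_2v$, its normal bundle is the restriction of the line bundle $\mathcal O_V(H)$ with $c_1=d_1u+d_2v$, so $TV|_H\cong TH\oplus\mathcal O_V(H)|_H$ and hence $\widehat A(TH)=\widehat A(TV)|_H\cdot g(d_1u+d_2v)^{-1}|_H$. Combining this with $\langle\alpha|_H,[H]\rangle=\langle\alpha\,(d_1u+d_2v),[V]\rangle$ and the identity $g(x)^{-1}x=2\sinh(x/2)$ gives the closed ambient expression
\[
\widehat A(H)=\Big\langle\,\widehat A(TV)\cdot 2\sinh\!\Big(\tfrac{d_1u+d_2v}{2}\Big),\,[V]\Big\rangle .
\]
Now $2\sinh\!\big(\tfrac{d_1u+d_2v}{2}\big)=e^{(d_1u+d_2v)/2}-e^{-(d_1u+d_2v)/2}$, so setting $F_{n_1,n_2,\bI}(d_1,d_2):=\big\langle\widehat A(TV)\,e^{(d_1u+d_2v)/2},[V]\big\rangle$ we obtain at once $\widehat A(H)=F_{n_1,n_2,\bI}(d_1,d_2)-F_{n_1,n_2,\bI}(-d_1,-d_2)$, which is the shape of the asserted identity. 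The problem is thereby reduced to the purely computational statement that this characteristic number equals the sum (\ref{Fi}).

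To compute $F_{n_1,n_2,\bI}(d_1,d_2)$ I would iterate the fibration $V\xrightarrow{\pi}\CC P^{n_1}$: push forward along the $\CC P^{n_2}$-fibre and then pair with $[\CC P^{n_1}]$. Using $g(x)=\dfrac{x\,e^{-x/2}}{1-e^{-x}}$, the fibre part of $\widehat A(TV)\,e^{(d_1u+d_2v)/2}$ equals $v\prod_j(v-i_ju)$ times $\dfrac{e^{-(n_2+1-d_2)v/2}}{(1-e^{-v})\prod_j\big(1-e^{-(v-i_ju)}\big)}$, up to a factor depending only on $u$, and the Gysin map of $\CC P(E)\to\CC P^{n_1}$ is the formal residue $\pi_{*}f(v)=\operatorname{Res}_{v=0}\dfrac{f(v)\,dv}{v\prod_j(v-i_ju)}$ (the Laurent series in $v^{-1}$ obtained by treating each $i_ju$ as nilpotent in $\Q[u]/(u^{n_1+1})$), which cancels the factor $v\prod_j(v-i_ju)$. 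Expanding $\prod_j\big(1-e^{-(v-i_ju)}\big)^{-1}$ as a geometric-type series — finite, because each $e^{i_ju}-1$ is nilpotent — introduces a multi-index; recording the positions $s_1<\dots<s_r$ of its nonzero entries and their sizes $l_t\ge1$ produces exactly the index set of (\ref{Fi}), and the $v$-residue, computed by the substitution $1-e^{-v}=\tau$, evaluates to a binomial $\binom{\bullet}{n_2+\sum_t l_t}$. Pairing with $[\CC P^{n_1}]$ amounts to taking $\operatorname{Res}_{u=0}$ after clearing the $u^{-(n_1+1)}$ pole: expanding each factor $(1-e^{-i_{s_t}u})^{l_t}$ by the binomial theorem brings in the indices $\vec m$ with $0\le m_t\le l_t$ together with the weight $(-1)^{\sum_t m_t}\binom{\vec l}{\vec m}$, and the substitution $e^{u}-1=w$ turns the $u$-residue into a binomial $\binom{\bullet}{n_1}$, while the pole order $n_1+1$ forces the constraint $\sum_t l_t\le n_1$. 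The three binomial coefficients and the weight then assemble into the summand of (\ref{Fi}).

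I expect the main obstacle to be this last step: tracking all the bookkeeping exponentials — the twist contributes a factor $e^{\pm\sigma_1 u/2}$, and each of the substitutions $\tau=1-e^{-v}$ and $w=e^{u}-1$ generates further shifts — so that the arguments of the three binomial coefficients come out with exactly the signs $\pm\sigma_1$, $\mp\vec s\cdot\vec m$ and the shift $+\sum_t l_t-r$ displayed in (\ref{Fi}). Several intermediate forms of $F_{n_1,n_2,\bI}$ are possible that look different from (\ref{Fi}) but agree with it after a reindexing such as $m_t\mapsto l_t-m_t$ and Vandermonde-type identities, so some care is needed to land on precisely the normalization in the statement. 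A subsidiary point worth checking is that the formal-residue description of $\pi_{*}$, rather than a Lagrange-interpolation one, is the correct tool here, since the Chern roots $-i_ju$ of $E$ need not be distinct — for instance when some $i_j$ vanish or coincide.
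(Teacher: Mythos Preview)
Your reduction of $\widehat A(H)$ to the ambient characteristic number and the splitting via $2\sinh(\cdot)=e^{(\cdot)}-e^{-(\cdot)}$ is exactly what the paper does in Section~3.3, so the structural part of the argument matches.

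Where you diverge is in the evaluation of $F_{n_1,n_2,\bI}(d_1,d_2)=\langle\widehat A(TV)\,e^{(d_1u+d_2v)/2},[V]\rangle$. The paper does \emph{not} push forward along the $\CC P^{n_2}$-fibre nor expand $(1-e^{-(v-i_ju)})^{-1}$ geometrically. Instead it pairs against $[V]$ using $\langle u^{n_1-k}v^{n_2+k},[V]\rangle=\beta_k$, Taylor-expands each $T(v-i_ju)$ in powers of $u$, and then invokes the technical identity of Lemma~\ref{deriviate of g},
\[
\sum_{m=0}^{n}\frac{(-v)^m}{m!}T^{(m)}(v)=(T(v)-vT'(v))\sum_{l=1}^{n}A(n,l)(-v)^{n-l}T(v)^{l-1},
\]
to convert the Taylor sums into polynomials in $T(v)$. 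Only in a second step (Lemma~\ref{5}) is the generating function $\sum_m A(m,l)x^m=(e^x-1)^l$ used to resum, producing the factors $(e^{-i_{s_t}u}-1)^{l_t}$. Your route reaches nilpotent factors $(e^{i_{s_t}u}-1)^{l_t}$ \emph{directly} from the geometric series and never needs the numbers $A(n,l)$ at all; this is genuinely shorter and more conceptual.

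The price is the one you already flagged: your expansion naturally yields
\[
\sum (-1)^{L-M}\binom{\vec l}{\vec m}\binom{n_1+k_1+\vec s\cdot\vec m}{n_1}\binom{n_2+k_2}{\,n_2+L\,},
\]
whereas the paper's detour through $A(n,l)$ produces the extra factor $(T-vT')^r/T^{2r}=e^{-rv}$ and hence lands on $\binom{n_2+k_2+L-r}{n_2+L}$ and $\binom{n_1+k_1-\vec s\cdot\vec m}{n_1}$ as in~(\ref{Fi}). The two closed forms agree numerically but are not term-by-term identical, so if you insist on the exact normalization~(\ref{Fi}) you will need a short combinatorial argument beyond the reindexing $m_t\mapsto l_t-m_t$; alternatively you could simply state your version of $F_{n_1,n_2,\bI}$ as the definition, since both serve equally well for Theorems~\ref{Ahat} and~\ref{13}. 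Your caution about using the formal-residue form of $\pi_*$ rather than Lagrange interpolation is well placed and correct: the residue description is valid regardless of coincidences among the $i_j$.
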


Since $V$ is a complex manifold, it carries a canonical spin$^c$ structure.

\begin{proposition}[ \protect Proposition \ref{spincondition} ] \label{spin}  If  there exists $\  k_{1},k_{2}\in\Z$ such that
$$d_1= 2k_{1}+n_1+1-\sigma_{1} ,d_2= 2k_{2}+n_{2}+1,$$ then $H_{n_1,n_2}^{\bI}(d_1,d_2)$ is a spin manifold carrying the induced spin structure from the spin$^c$ structure of $V$.
\end{proposition}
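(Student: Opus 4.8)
The plan is to realize the twisted Milnor hypersurface as a transverse zero locus, apply the adjunction formula, and reduce the vanishing of $w_2$ to a congruence between $c_1(TV)$ and $d_1u+d_2v$. First I would fix a complex line bundle $L\to V$ with $c_1(L)=d_1u+d_2v$ and take $H\subset V$ to be the zero locus of a smooth section of $L$ transverse to the zero section; such a section exists by Thom transversality, and its zero locus is a closed smooth submanifold \poincare\ dual to $c_1(L)$, i.e.\ a model of $H_{n_1,n_2}^{\bI}(d_1,d_2)$. The differential of the section identifies the normal bundle $N$ of $H$ in $V$ with $L|_H$, a complex line bundle; in particular $N$ is oriented, and since $V$ is complex hence oriented, $H$ inherits a compatible orientation, so $w_1(TH)=0$. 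The adjunction splitting $TV|_H\cong TH\oplus N$ of real oriented bundles gives, in $H^*(H;\Z/2)$, the identity $w(TH)\,w(N)=i^*w(TV)$ for the inclusion $i\colon H\hookrightarrow V$; comparing degree-$2$ parts and using that all first Stiefel--Whitney classes in sight vanish, $w_2(TH)=i^*\big(w_2(TV)-w_2(N)\big)$. Because $TV$ is a complex vector bundle and $N$ a complex line bundle, $w_2(TV)$ is the mod-$2$ reduction of $c_1(TV)$ and $w_2(N)$ that of $c_1(N)=(d_1u+d_2v)|_H$.

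It then remains to compute $c_1(TV)$ from the projective-bundle description $V=\CC P(\eta^{\otimes i_1}\oplus\cdots\oplus\eta^{\otimes i_{n_2}}\oplus\underline{\CC})\overset{\pi}{\to}\CC P^{n_1}$. Writing $E=\eta^{\otimes i_1}\oplus\cdots\oplus\eta^{\otimes i_{n_2}}\oplus\underline{\CC}$, one has $TV\cong\pi^*T\CC P^{n_1}\oplus T_{V/\CC P^{n_1}}$, and the relative Euler sequence gives $T_{V/\CC P^{n_1}}\oplus\underline{\CC}\cong\overline{\gamma}\otimes\pi^*E$, where $\gamma\subset\pi^*E$ is the vertical tautological line bundle. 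Taking first Chern classes, with $c_1(\overline{\eta})=u$ (so $c_1(\eta)=-u$), $c_1(\overline{\gamma})=v$, and $c_1(T\CC P^{n_1})=(n_1+1)u$, this yields
\be
c_1(TV)=(n_1+1)u+\Big((n_2+1)v-\sum_{j=1}^{n_2}i_j\,u\Big)=(n_1+1-\sigma_1)\,u+(n_2+1)\,v.
\ee
Hence $w_2(TH)$ is the restriction to $H$ of the mod-$2$ reduction of $(n_1+1-\sigma_1-d_1)u+(n_2+1-d_2)v$. Under the hypothesis $d_1=2k_1+n_1+1-\sigma_1$ and $d_2=2k_2+n_2+1$, the coefficients $n_1+1-\sigma_1-d_1=-2k_1$ and $n_2+1-d_2=-2k_2$ are both even, so that class already vanishes mod $2$ on $V$; therefore $w_2(TH)=0$, and combined with $w_1(TH)=0$ this shows $H$ is spin.

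Finally, to identify this with the spin structure induced from the spin$^c$ structure of $V$: the complex structure on $V$ determines a canonical spin$^c$ structure whose determinant line bundle is $\det_{\CC}TV$, with $c_1$ equal to $c_1(TV)$. Restricting it to $H$ and ``subtracting off'' the canonical spin$^c$ structure on the complex line bundle $N$ (via $TV|_H\cong TH\oplus N$) produces a spin$^c$ structure on $H$ whose determinant has $c_1=(c_1(TV)-c_1(N))|_H=-2(k_1u+k_2v)|_H$. Being twice an integral class, this spin$^c$ structure is precisely the one obtained from a genuine spin structure on $H$ by tensoring with the line bundle $\mathcal{L}$ with $c_1(\mathcal{L})=-(k_1u+k_2v)|_H$; untwisting by $\mathcal{L}^{-1}$ then picks out the spin structure on $H$ induced from that of $V$.

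I expect the delicate points to be bookkeeping rather than conceptual: keeping the signs straight in the relative Euler sequence and in passing between $\eta,\gamma$ and their conjugates $\overline{\eta},\overline{\gamma}$, and making the phrase ``induced spin structure'' precise as in the last paragraph. The single genuinely nontrivial ingredient, the adjunction splitting $TV|_H\cong TH\oplus N$, is available here even for a merely smooth defining section, since its differential already exhibits the complex line bundle isomorphism $N\cong L|_H$.
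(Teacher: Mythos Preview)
Your proof is correct and follows essentially the same route as the paper: realize the normal bundle as a complex line bundle via adjunction, compute $c_1(TV)=(n_1+1-\sigma_1)u+(n_2+1)v$ from the projective-bundle description, and observe that the hypothesis makes $c_1(TV)-(d_1u+d_2v)$ even, forcing $w_2(TH)=0$. You actually go further than the paper in spelling out what ``induced spin structure from the spin$^c$ structure of $V$'' means, whereas the paper simply cites \cite{Zh96R} for this point.
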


\noindent{\bf Convention} Throughout this paper, when we say $H_{n_1,n_2}^{\bI}(d_1,d_2)$ is spin, it always mean $H_{n_1,n_2}^{\bI}(d_1,d_2)$ carries the induced spin structure from the spin$^c$ structure of $V$.

\begin{remark}
If $\bI$ is negative and $d_1,d_2$ are positive, then $H_{n_1,n_2}^{\bI}(d_1,d_2)$ is a hyperplane section of $V$.
By the Lefschetz hyperplane theorem (c.f. \cite[Chapter V]{Le}), $i^* : H^2(V, \Z)\rightarrow H^{2}(H_{n_1,n_2}^{\bI}(d_1,d_2), \Z)$ is an isomorphism for $n_1+n_2>3$. Then Proposition \ref{spin} actually gives the necessary and suffcient condition for $H_{n_1,n_2}^{\bI}(d_1,d_2)$ to be spin. Furthermore, the embedding $i$ induces an isomorphism on their fundamental groups. Since $\pi_1(V)=0$, $H_{n_1,n_2}^{\bI}(d_1,d_2)$ is simply connected, and hence the spin structure is unique.
 \end{remark}
\vskip.3cm

\begin{theorem}[ Theorem \ref{13} ] \label{alpha} Assume that $H_{n_1,n_2}^{\bI}(d_1,d_2)$ is spin and $n_1+n_2=4k+2$ (i.e.,  $\dim H_{n_1,n_2}^{\bI}(d_1,d_2) \equiv 2\ \mod\ 8$). Then one has
\be \alpha(H_{n_1,n_2}^{\bI}(d_1,d_2))\equiv F_{n_1,n_2, \bI}(d_1,d_2)\ \mod\ 2. \ee
\end{theorem}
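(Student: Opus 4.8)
The plan is to combine Theorem~\ref{Ahat} (the closed formula for $\widehat{A}(H_{n_1,n_2}^{\bI}(d_1,d_2))$) with Zhang's analytic Rokhlin congruence formula, which relates the $\alpha$-invariant of a spin submanifold dual to a characteristic class with the $\widehat{A}$-genus of the ambient manifold twisted by an appropriate line bundle. Concretely, when $\dim H \equiv 2 \mod 8$, the $\alpha$-invariant lives in $KO_2(pt) = \Z_2$, and the Rokhlin congruence identifies $\alpha(H)$ modulo $2$ with a $\mathbb{Q}$-valued characteristic number on the ambient $V$ (a twisted $\widehat{A}$-type integral over $V$), reduced mod $2$. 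The first step is therefore to set up this formula precisely in our situation: $H = H_{n_1,n_2}^{\bI}(d_1,d_2)$ is Poincar\'e dual to $d_1 u + d_2 v$ in $V$, and the spin hypothesis (Proposition~\ref{spin}) gives $d_1 = 2k_1 + n_1 + 1 - \sigma_1$, $d_2 = 2k_2 + n_2 + 1$, so that $d_1 u + d_2 v$ agrees mod $2$ with $w_2(V)$ (equivalently with $c_1(V) = (n_1+1-\sigma_1)u + (n_2+1)v + \sigma_1 u$ suitably accounted) — this is exactly the integrality needed to apply the congruence.

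Next I would feed the normal bundle data of $H$ in $V$, namely $\nu = \mathcal{O}(d_1 u + d_2 v)$, into the Rokhlin formula to rewrite $\alpha(H) \bmod 2$ as a characteristic number of $V$: schematically
\[
\alpha(H) \;\equiv\; \left\langle \widehat{A}(TV)\, \mathrm{ch}(L)\, \bigl(\text{correction from } e^{d_1 u + d_2 v}\bigr), [V] \right\rangle \pmod 2,
\]
where $L$ is the square root of the normal line bundle permitted by the spin condition. The crucial observation is that this ambient integral is, up to the mod-$2$ reduction, the \emph{same} expression that arises when one computes $\widehat{A}(H)$ itself by pushing forward from $V$: indeed $\widehat{A}(H) = \langle \widehat{A}(TV) \cdot \tfrac{(d_1 u + d_2 v)}{?}\cdot(\cdots), [V]\rangle$ via the adjunction $TH \oplus \nu|_H = TV|_H$ and the Gysin sequence. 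The two expressions $F_{n_1,n_2,\bI}(d_1,d_2)$ and $F_{n_1,n_2,\bI}(-d_1,-d_2)$ in Theorem~\ref{Ahat} come precisely from splitting $\widehat{A}(H) = \langle \text{(something)}\,(e^{x} - e^{-x})/?\,, [V]\rangle$-type integrals; modulo $2$ one of the two terms is killed or the two collapse. So the heart of the argument is: (i) identify $F_{n_1,n_2,\bI}(d_1,d_2)$ as the value of a specific ambient $V$-integral of a polynomial in $u, v$ (which we can extract by tracking the derivation of Theorem~\ref{Ahat} — the binomial coefficients $\binom{\tfrac{d_1+n_1-1+\sigma_1}{2} - \vec s\cdot\vec m}{n_1}$ etc.\ are residues of rational functions built from the characteristic classes of $V$ and of the line bundle $d_1 u + d_2 v$); (ii) show that under the spin substitution $d_i = 2k_i + (\text{odd})$, the "twisted" ambient integral appearing in the Rokhlin formula differs from this $F$ only by terms divisible by $2$; (iii) check that the second term $F_{n_1,n_2,\bI}(-d_1,-d_2)$ does not contribute mod $2$ — this should follow because it corresponds to $\mathrm{ch}$ of the \emph{dual} line bundle, and mod $2$ a line bundle and its dual have the same effect (their difference is $2 \times$ (an integral class) in the relevant $K$-theory expression), or because the Rokhlin formula only sees one of the two summands.

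The main obstacle I anticipate is step (ii)–(iii): keeping careful track of the half-integers and $2$-adic denominators. The coefficients in $F$ involve $\tfrac{d_1 + n_1 - 1 + \sigma_1}{2}$ and $\tfrac{d_2 + n_2 - 1}{2}$, which are genuine integers exactly under the spin hypothesis; the congruence $\widehat{A}(TV)\,\mathrm{ch}(L)$ integrand, expanded in $u$ and $v$, produces rational coefficients whose $2$-adic valuations must be shown to be non-negative except for a single term that reproduces $F \bmod 2$. This is the kind of bookkeeping where Zhang's formula in \cite{Zh96E} (and the complete-intersection refinements in \cite{FZ}, \cite{FS}, \cite{Bar}) provides the template, but the twistings $\eta^{\otimes i_j}$ make the ambient cohomology ring $H^*(V) = \Z[u,v]/(u^{n_1+1}, \prod_j(v + i_j u)\cdot v)$ (rather than a product of truncated polynomial rings), so every residue computation acquires the extra factors $v + i_j u$, and one must verify that the mod-$2$ reduction is insensitive to these. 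I would handle this by reducing everything mod $2$ at the level of the relation ideal first — over $\mathbb{F}_2$ the class $i_j u$ is either $0$ or $u$ — which should drastically simplify the residue and make the match with $F_{n_1,n_2,\bI}(d_1,d_2) \bmod 2$ transparent, with the $F(-d_1,-d_2)$ term visibly dropping out because $-d_i \equiv d_i$ is false in $\Z$ but the relevant Chern character contribution becomes an even class.
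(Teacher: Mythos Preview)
Your proposal has the right starting point---Zhang's Rokhlin congruence formula is indeed the key tool---but the route you sketch takes an unnecessary and problematic detour through Theorem~\ref{Ahat}.

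First, note that Theorem~\ref{Ahat} is stated for $n_1+n_2 \equiv 1 \pmod 2$ (so that $\dim H \equiv 0 \pmod 4$), whereas the $\alpha$-statement concerns $n_1+n_2 \equiv 2 \pmod 4$; the two live in disjoint parities, so you cannot literally ``combine'' them. More importantly, the paper's argument does not pass through $\widehat{A}(H)$ at all. The central fact (Proposition~\ref{mainprop}) is the \emph{exact} identity
\[
F_{n_1,n_2,\bI}(d_1,d_2) \;=\; \Big\langle \widehat{A}(V)\, e^{\frac{d_1u+d_2v}{2}},\, [V] \Big\rangle,
\]
valid for all $n_1,n_2,d_1,d_2$. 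Zhang's formula (Theorem~\ref{Zhang-Rokhlin}) says precisely that $\alpha(H) \equiv \langle \widehat{A}(V)\, e^{c_1(\xi)/2}, [V]\rangle \pmod 2$, and here $c_1(\xi)=d_1u+d_2v$. Substituting gives $\alpha(H) \equiv F_{n_1,n_2,\bI}(d_1,d_2) \pmod 2$ in one line.

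So your steps (ii) and (iii)---matching the Rokhlin integrand to $F$ only up to terms divisible by $2$, and arguing that $F_{n_1,n_2,\bI}(-d_1,-d_2)$ drops out mod $2$---are both superfluous. The term $F(-d_1,-d_2)$ simply never enters the $\alpha$-computation: it appears in the $\widehat{A}$-genus because the adjunction identity $Q(x)^{-1}\cdot x = e^{x/2}-e^{-x/2}$ produces two exponentials, but Zhang's formula involves only $e^{c_1/2}$. There is no $2$-adic bookkeeping to do, no need to reduce the relation ideal of $H^*(V)$ mod $2$, and no subtlety from the twistings $i_j$ beyond what is already absorbed into the proof of Proposition~\ref{mainprop}. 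Your step (i) is the whole story; once you have that identification as an exact equality (not merely mod $2$), the theorem is immediate.
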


\begin{corollary} [ Example \ref{3.1}, \ref{3.2}] Take $\bI=\mathbf{0}$. For $n_1+n_2\equiv 1\ \mod\ 2$, we have
\be \widehat{A}(H_{n_1,n_2}^{\mathbf{0}}(d_1,d_2))=2\binom{n_1+k_1}{n_1}\binom{n_2+k_2}{n_2}; \ee
and when $H_{n_1,n_2}^{\mathbf{0}}(d_1,d_2)$ is spin and $n_1+n_2\equiv2\ \mod\ 4$, we have
\be \alpha(H_{n_1,n_2}^{\mathbf{0}}(d_1,d_2))\equiv \binom{n_1+k_1}{n_1}\binom{n_2+k_2}{n_2}\ \mod\ 2.\ee
In particular, we have $\widehat{A}(H_{n_1,n_2}^{\mathbf{0}}(1,1))=0$, which coincides with the result in \cite[p. 40]{HBJ94} that the $\widehat{A}$-genus of Milnor hypersurface always vanishes.
\end{corollary}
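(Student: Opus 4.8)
The plan is to read everything off Theorems \ref{Ahat} and \ref{alpha} by specializing the master sum $F_{n_1,n_2,\bI}$ at $\bI=\mathbf 0$, where it collapses to a single product of two (generalized) binomial coefficients. First I would note that $\bI=\mathbf 0$ forces $\sigma_1=0$ and $\vec s\cdot\vec m=0$ for every admissible index, so in (\ref{Fi}) the factor $\binom{\frac{d_1+n_1-1}{2}-\vec s\cdot\vec m}{n_1}$ degenerates to the constant $\binom{\frac{d_1+n_1-1}{2}}{n_1}$ and pulls out of the sum. What is left is $\binom{\frac{d_1+n_1-1}{2}}{n_1}$ times a sum, over $0\le r\le n_2$, over $1\le s_1<\cdots<s_r\le n_2$, over $\vec l$ with $l_j\ge 1$ and $\sum l_j\le n_1$, and over $0\le m_j\le l_j$, of $(-1)^{\sum m_j}\binom{\vec l}{\vec m}\binom{\frac{d_2+n_2-1}{2}+\sum l_j-r}{n_2+\sum l_j}$.

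The key observation is that this summand depends on $\vec m$ only through $\prod_{j=1}^r(-1)^{m_j}\binom{l_j}{m_j}$, so for fixed $r,\vec s,\vec l$ the $\vec m$-sum factors as $\prod_{j=1}^r\big(\sum_{m_j=0}^{l_j}(-1)^{m_j}\binom{l_j}{m_j}\big)=\prod_{j=1}^r(1-1)^{l_j}$, which vanishes as soon as some $l_j\ge 1$, hence for every $r\ge 1$. Only the $r=0$ term (an empty product equal to $\binom{\frac{d_2+n_2-1}{2}}{n_2}$) survives, giving
\[
F_{n_1,n_2,\mathbf 0}(d_1,d_2)=\binom{\tfrac{d_1+n_1-1}{2}}{n_1}\binom{\tfrac{d_2+n_2-1}{2}}{n_2}.
\]
Inserting the spin parametrization $d_1=2k_1+n_1+1$, $d_2=2k_2+n_2+1$ (using $\sigma_1=0$ in Proposition \ref{spin}) turns $\tfrac{d_i+n_i-1}{2}$ into $n_i+k_i$, so $F_{n_1,n_2,\mathbf 0}(d_1,d_2)=\binom{n_1+k_1}{n_1}\binom{n_2+k_2}{n_2}$.

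For the $\widehat A$-genus I would combine this with Theorem \ref{Ahat}, which needs $F_{n_1,n_2,\mathbf 0}(-d_1,-d_2)$. Here I would use the reflection identity $\binom{\frac{-d+n-1}{2}}{n}=(-1)^n\binom{\frac{d+n-1}{2}}{n}$: the product defining the generalized binomial is, up to the factor $2^{-n}$, a product of $(\pm d+e)$ over the set $\{n-1,n-3,\dots,-(n-1)\}$, which is symmetric about $0$, so replacing $d$ by $-d$ merely reindexes $e\mapsto-e$ at the cost of $(-1)^n$. Hence $F_{n_1,n_2,\mathbf 0}(-d_1,-d_2)=(-1)^{n_1+n_2}F_{n_1,n_2,\mathbf 0}(d_1,d_2)$, which equals $-F_{n_1,n_2,\mathbf 0}(d_1,d_2)$ since $n_1+n_2$ is odd, so Theorem \ref{Ahat} yields $\widehat A(H_{n_1,n_2}^{\mathbf 0}(d_1,d_2))=2F_{n_1,n_2,\mathbf 0}(d_1,d_2)=2\binom{n_1+k_1}{n_1}\binom{n_2+k_2}{n_2}$. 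The $\alpha$-invariant statement is then immediate from Theorem \ref{alpha}, namely $\alpha\equiv F_{n_1,n_2,\mathbf 0}(d_1,d_2)=\binom{n_1+k_1}{n_1}\binom{n_2+k_2}{n_2}\bmod 2$. Finally, for $H_{n_1,n_2}^{\mathbf 0}(1,1)$ the parity hypothesis makes one of $n_1,n_2$, say $n_1$, even and hence $\ge 2$; then $k_1=-n_1/2$ and $\binom{n_1+k_1}{n_1}=\binom{n_1/2}{n_1}=0$, because the index $n_1/2\in\{0,1,\dots,n_1-1\}$ produces a zero factor in the numerator. Thus $\widehat A(H_{n_1,n_2})=0$, recovering \cite[p.~40]{HBJ94}.

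The argument is essentially mechanical once the right reduction is seen; the only real point is spotting that the $m_j$-sums decouple into factors $(1-1)^{l_j}$ that annihilate all $r\ge 1$ contributions, and being a little careful with the generalized-binomial reflection in the $\widehat A$-step (and with the normalization constants $2^{-n_i}$ there, which cancel).
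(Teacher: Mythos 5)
Your proposal is correct and follows essentially the same route as the paper (Example \ref{3.1}, Example \ref{3.2} and equation (\ref{Ffac})): specialize $F_{n_1,n_2,\bI}$ at $\bI=\mathbf 0$, then feed the result into Theorems \ref{Ahat} and \ref{13}. The only cosmetic difference is the mechanism by which $F$ collapses to $\binom{n_1+k_1}{n_1}\binom{n_2+k_2}{n_2}$ --- the paper invokes its Observation on the intermediate expression (\ref{observe}), where $r\geq 1$ terms vanish because of the explicit factor $i_{s_1}^{p_1}\cdots i_{s_r}^{p_r}=0$, whereas you read the vanishing directly off (\ref{Fi}) via the decoupled $\vec m$-sums $\prod_j(1-1)^{l_j}=0$; similarly, for $\widehat A(H_{n_1,n_2}^{\mathbf 0}(1,1))=0$ the paper cancels the two $F$-evaluations using $\binom{n_i/2-1}{n_i}=-\binom{n_i/2}{n_i}$, while you observe that the factor $\binom{n_i/2}{n_i}$ corresponding to the even $n_i\geq 2$ already vanishes. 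Both are valid and equivalent in substance.
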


\begin{corollary}[Example \ref{3.4}]  $H_{2,n_2}^{(j,0,\cdots,0)}(1,n_2+1)$ is spin for $j\equiv 0\ \mod\ 2$, and \\
$$\widehat{A}(H_{2,n_2}^{(j,0,\cdots,0)}(1,n_2+1))\neq0 \iff j\neq 0,-2.$$
\end{corollary}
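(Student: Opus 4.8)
The plan is to deduce both statements by specializing results already in hand: Proposition~\ref{spin} for the spin claim and Theorem~\ref{Ahat} for the $\widehat{A}$-genus, the latter requiring an evaluation of the sum $F_{n_1,n_2,\bI}$ of~\eqref{Fi} in the very restricted situation $n_1=2$, $\bI=(j,0,\dots,0)$, $(d_1,d_2)=(1,n_2+1)$. Here $\sigma_1=i_1+\cdots+i_{n_2}=j$. Since $n_1+n_2=n_2+2$, the manifold has real dimension $2(n_2+1)$, divisible by $4$ only when $n_2$ is odd; when $n_2$ is even the $\widehat{A}$-genus vanishes identically, so I assume $n_2$ odd, which is exactly the congruence $n_1+n_2\equiv1\bmod 2$ needed for Theorem~\ref{Ahat}.

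The spin claim is immediate from (the sufficiency direction of) Proposition~\ref{spin}: the two conditions there become $1=2k_1+2+1-j$ and $n_2+1=2k_2+n_2+1$, i.e.\ $k_1=\tfrac{j-2}{2}$ and $k_2=0$, and such integers $k_1,k_2$ exist precisely when $j$ is even.

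For the $\widehat{A}$-genus, Theorem~\ref{Ahat} gives $\widehat{A}(H_{2,n_2}^{\bI}(1,n_2+1))=F_{2,n_2,\bI}(1,n_2+1)-F_{2,n_2,\bI}(-1,-n_2-1)$, and the point is that~\eqref{Fi} collapses to a handful of terms; write $L:=l_1+\cdots+l_r$, so $L\ge r$ because each $l_p\ge1$. At $(d_1,d_2)=(1,n_2+1)$ one has $\tfrac{d_1+n_1-1+\sigma_1}{2}=1+\tfrac j2$ and $\tfrac{d_2+n_2-1}{2}=n_2$, so the last binomial factor of~\eqref{Fi} is $\binom{n_2+L-r}{n_2+L}$, whose top is $\ge n_2\ge0$ and whose bottom exceeds its top by $r$; it therefore vanishes unless $r=0$, leaving the single summand $\binom{1+j/2}{2}\binom{n_2}{n_2}$, so $F_{2,n_2,\bI}(1,n_2+1)=\binom{1+j/2}{2}$. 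At $(d_1,d_2)=(-1,-n_2-1)$ one has $\tfrac{-d_1+n_1-1+\sigma_1}{2}=\tfrac j2$ and $\tfrac{-d_2+n_2-1}{2}=-1$, so the last factor becomes $\binom{L-r-1}{n_2+L}$, which vanishes unless $L=r$, i.e.\ every $l_p=1$, in which case it equals $\binom{-1}{n_2+r}=(-1)^{n_2+r}$. With every $l_p=1$ we have $\binom{\vec l}{\vec m}=1$ and $\vec m$ runs over $\{0,1\}^r$; since $\bI$ has the single nonzero entry $i_1=j$, the summand involves $\vec m$ only through $(-1)^{\sum m_p}$ and through $\vec s\cdot\vec m$, which equals $j\,m_1$ when $s_1=1$ and vanishes otherwise. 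Hence the $\vec m$-sum factors, and every index $s_k\neq1$ contributes a vanishing factor $\sum_{m_k\in\{0,1\}}(-1)^{m_k}=0$; as the $s_k$ are distinct, the only surviving terms are those with $r=0$, together with those with $r=1$ and $s_1=1$. Collecting these, and using $n_2$ odd, yields $F_{2,n_2,\bI}(-1,-n_2-1)=-\binom{j/2}{2}+\bigl(\binom{j/2}{2}-\binom{-j/2}{2}\bigr)=-\binom{-j/2}{2}$. Therefore $\widehat{A}=\binom{1+j/2}{2}+\binom{-j/2}{2}=\tfrac j2\bigl(\tfrac j2+1\bigr)$, which vanishes precisely when $j=0$ or $j=-2$, proving the equivalence.

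The main obstacle is the bookkeeping rather than anything deep: one must spot the two vanishing mechanisms for the last binomial factor of~\eqref{Fi} — the convention $\binom{a}{b}=0$ for integers $0\le a<b$ at $(1,n_2+1)$, and the evaluation $\binom{-1}{k}=(-1)^{k}$ together with $\sum_{m\in\{0,1\}}(-1)^{m}=0$ at $(-1,-n_2-1)$ — since these turn an a priori unwieldy multi-index sum into two or three surviving terms. After that one only needs the elementary identity $\binom{1+t}{2}+\binom{-t}{2}=t(t+1)$ and a little care with the signs coming from $n_2$ odd.
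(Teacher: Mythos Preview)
Your argument is correct and reaches the same value $\widehat{A}=\tfrac{j}{2}\bigl(\tfrac{j}{2}+1\bigr)$ as the paper, by essentially the same method: specialization of Theorem~\ref{Ahat} and formula~\eqref{Fi} to $n_1=2$, $\bI=(j,0,\dots,0)$, $(d_1,d_2)=(1,n_2+1)$. The only minor difference is in the order of simplification: the paper first invokes the Observation (drawn from~\eqref{observe}) that for $\bI=(j,0,\dots,0)$ only $r=0$ and $r=1$, $s_1=1$ can contribute, writes the resulting two-term formula for general $(d_1,d_2)$, and then plugs in; you instead work directly from~\eqref{Fi}, killing all $r\ge1$ at $(1,n_2+1)$ by the binomial vanishing $\binom{n_2+L-r}{n_2+L}=0$, and at $(-1,-n_2-1)$ first forcing $L=r$ and then recovering the same ``only $r\le1$, $s_1=1$'' reduction via the factorization of the $\vec m$-sum. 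Both routes are short and equivalent.
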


\begin{remark}
For non-twisted case, one has $\widehat{A}(H_{2,n_2}^{\bf 0}(1,n_2+1))=0$. This provides good examples to illustrate the difference between twisted Milnor hypersurfaces and the usual Milnor hypersurfaces.
Actually, we give a family of twisted Milnor hypersurfaces with non vanishing $\widehat{A}$-genus, see Subsection \ref{examples}.
\end{remark}

\begin{corollary}[Example \ref{3.5}]
For $n_1=1, n_2\equiv 1\ \mod\ 4$,
$$\alpha(H_{n_1,n_2}^{\bI}(d_1,d_2))\equiv \binom{k_1+1}{1}\binom{n_2+k_2}{n_2}+\sigma_1\binom{n_2+k_2}{n_2+1}\ \mod\ 2.$$
\end{corollary}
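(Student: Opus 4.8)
The plan is to reduce the statement to a direct evaluation of $F_{1,n_2,\bI}(d_1,d_2)$ modulo $2$ via Theorem~\ref{alpha}. First I would check the dimension hypothesis: with $n_1=1$ and $n_2\equiv 1\ \mod\ 4$ we get $n_1+n_2=4k+2$, so $\dim H_{1,n_2}^{\bI}(d_1,d_2)\equiv 2\ \mod\ 8$ and Theorem~\ref{alpha} gives $\alpha(H_{1,n_2}^{\bI}(d_1,d_2))\equiv F_{1,n_2,\bI}(d_1,d_2)\ \mod\ 2$. Since $H_{1,n_2}^{\bI}(d_1,d_2)$ is assumed spin, Proposition~\ref{spin} supplies $k_1,k_2\in\Z$ with $d_1=2k_1+2-\sigma_1$ and $d_2=2k_2+n_2+1$; substituting into (\ref{Fi}) clears the fractions, since $\tfrac{d_1+n_1-1+\sigma_1}{2}=k_1+1$ and $\tfrac{d_2+n_2-1}{2}=k_2+n_2$.

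The second step is to collapse the sum in (\ref{Fi}) using $n_1=1$. The summation constraints demand $l_j\geq 1$ for all $1\leq j\leq r$ together with $\sum_{j=1}^r l_j\leq n_1=1$; this forces $r\in\{0,1\}$, and when $r=1$ it forces $l_1=1$. So only two groups of terms survive. For $r=0$ all vectors are empty and the term is $\binom{k_1+1}{1}\binom{k_2+n_2}{n_2}$. For $r=1$ one sums over $1\leq s_1\leq n_2$ and $m_1\in\{0,1\}$; since the binomials in (\ref{Fi}) are the generalized ones, $\binom{k_1+1-i_{s_1}m_1}{1}=k_1+1-i_{s_1}m_1$, the inner $m_1$-sum is $\sum_{m_1=0}^1(-1)^{m_1}\binom{1}{m_1}(k_1+1-i_{s_1}m_1)=(k_1+1)-(k_1+1-i_{s_1})=i_{s_1}$, and the remaining binomial factor is $\binom{k_2+n_2}{n_2+1}$ for every $s_1$. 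Hence the $r=1$ contribution is $\binom{k_2+n_2}{n_2+1}\sum_{s_1=1}^{n_2}i_{s_1}=\sigma_1\binom{k_2+n_2}{n_2+1}$.

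Adding the two contributions gives
$$F_{1,n_2,\bI}(d_1,d_2)=\binom{k_1+1}{1}\binom{n_2+k_2}{n_2}+\sigma_1\binom{n_2+k_2}{n_2+1},$$
and reducing modulo $2$ with Theorem~\ref{alpha} yields the corollary. There is no analytic difficulty here — all of that is packaged in Theorem~\ref{alpha}; the only point requiring care is the combinatorial bookkeeping: reading off from the constraints in (\ref{Fi}) that $n_1=1$ caps the outer index at $r\le 1$ and pins $l_1=1$, and then noticing the exact cancellation in the $r=1$ sum, where summing the constant $k_1+1$ over the $\binom{n_2}{1}$ subsets $\{s_1\}$ cancels against the sum of the $k_1+1-i_{s_1}$, leaving only $\sum_{s_1}i_{s_1}=\sigma_1$. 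I would also double-check the edge behaviour, namely that the generalized binomial $\binom{x}{1}=x$ is the one in use, so the identity holds with no sign restriction on $k_1+1-i_{s_1}$.
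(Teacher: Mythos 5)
Your proposal is correct and takes essentially the same route as the paper's Example~\ref{3.5}: invoke Theorem~\ref{13} to reduce the $\alpha$-invariant to $F_{1,n_2,\bI}(d_1,d_2)\bmod 2$, use the constraint $\sum_j l_j\le n_1=1$ to collapse the sum in (\ref{Fi}) to the $r=0$ term $\binom{k_1+1}{1}\binom{n_2+k_2}{n_2}$ plus the $r=1$, $l_1=1$ terms, and observe that the inner alternating $m_1$-sum telescopes to $i_{s_1}$, which summed over $s_1$ gives $\sigma_1\binom{n_2+k_2}{n_2+1}$. The bookkeeping and the use of the generalized binomial $\binom{x}{1}=x$ are exactly as in the paper.
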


\begin{corollary}[Example \ref{3.6}]
For $n_1=2, n_2\equiv 0\ \mod\ 4$, denote $\sigma_2=\underset{1\leq j<k\leq n_2}{\sum} i_j\cdot i_k$,
\begin{align*}
 \alpha(H_{n_1,n_2}^{\bI}(d_1,d_2))
\equiv& \binom{k_1+2}{2}\binom{n_2+k_2}{n_2}+\left(-\frac{\sigma_1^2-2\sigma_2}{2}+\frac{(2k_1+3)\sigma_1}{2}\right)\binom{n_2+k_2}{n_2+1}\\
& \ \ +(\sigma_1^2-2\sigma_2)\binom{n_2+k_2+1}{n_2+2}+\sigma_2\binom{n_2+k_2}{n_2+2}\ \mod\ 2.
\end{align*}
\end{corollary}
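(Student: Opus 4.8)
The plan is to start from Theorem~\ref{alpha}. Since $n_2\equiv 0\ \mod 4$ forces $n_1+n_2=2+n_2\equiv 2\ \mod 4$, the hypersurface has real dimension $2(n_1+n_2-1)\equiv 2\ \mod 8$ (and $n_1+n_2=4k+2$), so Theorem~\ref{alpha} applies and gives $\alpha(H_{2,n_2}^{\bI}(d_1,d_2))\equiv F_{2,n_2,\bI}(d_1,d_2)\ \mod 2$. By Proposition~\ref{spin} the spin hypothesis (implicit in the use of $k_1,k_2$) means $d_1=2k_1+3-\sigma_1$ and $d_2=2k_2+n_2+1$; substituting these into \eqref{Fi} turns the factor $\binom{(d_1+n_1-1+\sigma_1)/2-\vec s\cdot\vec m}{n_1}$ into $\binom{k_1+2-\vec s\cdot\vec m}{2}$ and the factor $\binom{(d_2+n_2-1)/2+\sum l_j-r}{n_2+\sum l_j}$ into $\binom{n_2+k_2+\sum l_j-r}{n_2+\sum l_j}$. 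So everything reduces to evaluating $F_{2,n_2,\bI}$ explicitly with $n_1=2$.

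Next I would enumerate the admissible index data. The constraint $\sum_{j=1}^{r}l_j\le n_1=2$ with each $l_j\ge 1$ leaves exactly four shapes: (i) $r=0$; (ii) $r=1$, $l_1=1$; (iii) $r=1$, $l_1=2$; (iv) $r=2$, $l_1=l_2=1$. Shape (i) contributes the single term $\binom{k_1+2}{2}\binom{n_2+k_2}{n_2}$, which is the first summand claimed. In the other three shapes the second binomial factor depends neither on $\vec m$ nor on the chosen subset $s_1<\cdots<s_r$: it equals $\binom{n_2+k_2}{n_2+1}$ for (ii), $\binom{n_2+k_2+1}{n_2+2}$ for (iii), and $\binom{n_2+k_2}{n_2+2}$ for (iv). One can therefore pull it out and is left, for each shape, with a sum over $\vec m$ (weighted by $\binom{\vec l}{\vec m}(-1)^{\sum m_j}$) followed by a sum over the subsets $s_1<\cdots<s_r$ of $\{1,\dots,n_2\}$.

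The heart of the matter is then three elementary identities obtained by expanding $\binom{x}{2}=\tfrac12(x^2-x)$: writing $a\coloneqq k_1+2$, the first difference gives $\binom{a}{2}-\binom{a-i}{2}=ai-\binom{i+1}{2}$, the second difference gives $\binom{a}{2}-2\binom{a-i}{2}+\binom{a-2i}{2}=i^2$, and the mixed difference gives $\binom{a}{2}-\binom{a-i}{2}-\binom{a-j}{2}+\binom{a-i-j}{2}=ij$. Consequently shape (ii) yields $\bigl(\sum_s a i_s-\sum_s\binom{i_s+1}{2}\bigr)\binom{n_2+k_2}{n_2+1}$, shape (iii) yields $\bigl(\sum_s i_s^2\bigr)\binom{n_2+k_2+1}{n_2+2}$, and shape (iv) yields $\bigl(\sum_{s<t}i_si_t\bigr)\binom{n_2+k_2}{n_2+2}$. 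Finally I would substitute $\sum_s i_s=\sigma_1$, $\sum_s i_s^2=\sigma_1^2-2\sigma_2$, $\sum_{s<t}i_si_t=\sigma_2$, and $\sum_s\binom{i_s+1}{2}=\tfrac12(\sigma_1^2-2\sigma_2+\sigma_1)$, and use $(k_1+2)\sigma_1-\tfrac12(\sigma_1^2-2\sigma_2+\sigma_1)=\tfrac{(2k_1+3)\sigma_1}{2}-\tfrac{\sigma_1^2-2\sigma_2}{2}$; this assembles the four terms of the statement exactly, hence a fortiori mod $2$.

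There is no real obstacle here: once Theorem~\ref{alpha} and Proposition~\ref{spin} are granted, the whole thing is bookkeeping. The only points demanding a bit of care are the complete and non-redundant enumeration of the tuples $(r,\vec l,\vec m,\vec s)$ when $n_1=2$ (in particular the three values $m_1\in\{0,1,2\}$ with weights $\binom{2}{m_1}$ in shape (iii)), and the observation that the ostensibly half-integral quantities that appear — such as $\sum_s\binom{i_s+1}{2}$ and the coefficient of $\binom{n_2+k_2}{n_2+1}$ — are genuine integers, which follows because each $i_s(i_s+1)$ is even.
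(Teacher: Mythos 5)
Your proof is correct, and it follows the same route as the paper: specialize Theorem \ref{13} and Proposition \ref{spincondition} to $n_1=2$, enumerate the four admissible index shapes $(r,\vec l)$, observe that the $v$-binomial depends only on $(r,\sum l_j)$, and evaluate the inner alternating sums using the second-difference identities for $\binom{x}{2}$ to produce $\sigma_1$, $\sigma_1^2-2\sigma_2$, and $\sigma_2$. The bookkeeping all checks out — in particular your shape (ii) coefficient $(k_1+2)\sigma_1-\tfrac12(\sigma_1^2-2\sigma_2+\sigma_1)$ does reduce to $\tfrac{(2k_1+3)\sigma_1}{2}-\tfrac{\sigma_1^2-2\sigma_2}{2}$, and your integrality remark for $\sum_s\binom{i_s+1}{2}$ is the right sanity check.
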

\vskip.2cm

The Atiyah-Hirzebruch vanishing theorem \cite{AH70} asserts that if the circle $S^1$ acts nontrivially on a connected spin manifold $M$, then $\widehat{A}(M)=0$. We therefore have
\begin{corollary} If $H_{n_1,n_2}^{\bI}(d_1,d_2)$ is spin and $F_{n_1,n_2, \bI}(d_1,d_2)-F_{n_1,n_2, \bI}(-d_1 ,-d_2 )$ does not vanish, then $H_{n_1,n_2}^{\bI}(d_1,d_2)$ does not admit a nontrivial circle action.
\end{corollary}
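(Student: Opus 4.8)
The plan is to obtain the statement directly from Theorem~\ref{Ahat} and the Atiyah--Hirzebruch vanishing theorem recalled just above; no new machinery is required. Write $M:=H_{n_1,n_2}^{\bI}(d_1,d_2)$. First I would place myself in the range $n_1+n_2\equiv 1\ \mod\ 2$, which is the range in which Theorem~\ref{Ahat} applies and the only one of interest here: in the complementary range $\dim M=2(n_1+n_2-1)$ is not divisible by $4$, so $\widehat A(M)=0$ by convention. In the range $n_1+n_2\equiv 1\ \mod\ 2$, Theorem~\ref{Ahat} gives
\[
\widehat A(M)=F_{n_1,n_2,\bI}(d_1,d_2)-F_{n_1,n_2,\bI}(-d_1,-d_2),
\]
so the hypothesis ``$F_{n_1,n_2,\bI}(d_1,d_2)-F_{n_1,n_2,\bI}(-d_1,-d_2)\neq 0$'' is precisely the assertion $\widehat A(M)\neq 0$.

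Second, I would check that $M$ satisfies the two hypotheses of the Atiyah--Hirzebruch theorem. By assumption $M$ is spin, which by the Convention means it carries the spin structure induced from the spin$^c$ structure of $V$ (cf.\ Proposition~\ref{spin}); in particular it is a genuine closed spin manifold, so $\widehat A(M)$ is the $\Z$-valued index of its Dirac operator. Moreover $M$ is connected — this is part of the standing setup for twisted Milnor hypersurfaces as smooth hypersurfaces in the connected variety $V$, and in the cases covered by the Remark following Proposition~\ref{spin} it is even simply connected. Thus $M$ is a closed connected spin manifold with $\widehat A(M)\neq 0$.

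Finally, I would invoke the Atiyah--Hirzebruch vanishing theorem \cite{AH70} in contrapositive form: a connected spin manifold with non-vanishing $\widehat A$-genus admits no non-trivial $S^1$-action. Hence $M=H_{n_1,n_2}^{\bI}(d_1,d_2)$ admits no non-trivial circle action, which is the claim. I do not expect a genuine obstacle here; the only points deserving a word are the verification that ``spin'' in our sense really produces a spin manifold to which \cite{AH70} applies (immediate from Proposition~\ref{spin} and the Convention) and the connectedness of $M$, both of which are either standard for hypersurfaces of this type or have effectively been recorded already.
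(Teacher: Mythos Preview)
Your proposal is correct and follows exactly the paper's approach: the corollary is stated immediately after recalling the Atiyah--Hirzebruch vanishing theorem, and the paper's implicit proof is simply to combine that theorem with Theorem~\ref{Ahat}, precisely as you do. Your extra remarks on the parity of $n_1+n_2$ and on connectedness are reasonable housekeeping that the paper leaves tacit.
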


Based on formula (\ref{Fi}),  we have
\begin{corollary}[ Example \ref{3.3} ] \label{vanishing}
If $n_2$ is even, then $\widehat{A}(H_{n_1,n_2}^{\bI}(d_1,1))=0,\ \forall \ \bI. $
\end{corollary}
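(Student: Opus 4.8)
The plan is to read the vanishing off directly from Theorem \ref{Ahat}. Since $\widehat{A}(H_{n_1,n_2}^{\bI}(d_1,1))=F_{n_1,n_2,\bI}(d_1,1)-F_{n_1,n_2,\bI}(-d_1,-1)$, it is enough to show that, when $n_2$ is even, both $F_{n_1,n_2,\bI}(d_1,1)$ and $F_{n_1,n_2,\bI}(-d_1,-1)$ vanish. I would in fact prove more: every individual summand of the defining sum in (\ref{Fi}) is zero, already because of the binomial factor
$$
\binom{\tfrac{d_2+n_2-1}{2}+\sum_{j=1}^{r}l_j-r}{\,n_2+\sum_{j=1}^{r}l_j\,}
$$
involving $d_2$ and $n_2$, as soon as $d_2\in\{1,-1\}$ and $n_2$ is even.

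To see this, I would fix a summand and set $L=\sum_{j=1}^{r}l_j$. The summation ranges force $l_j\ge 1$ for each $1\le j\le r$, hence $L\ge r$ (with $L=r=0$ in the term $r=0$). Writing $d_2=\varepsilon\in\{1,-1\}$, put $a=\tfrac{\varepsilon+n_2-1}{2}+L-r$ and $b=n_2+L$. Then (i) $\varepsilon+n_2-1$ is even because $n_2$ is even and $\varepsilon$ is odd, so $a\in\Z$; (ii) $a\ge\tfrac{\varepsilon+n_2-1}{2}\ge\tfrac{n_2}{2}-1\ge 0$, using $L\ge r$ and $n_2\ge 2$; and (iii) $a<b$, since $\tfrac{\varepsilon+n_2-1}{2}-r<n_2$ for both values of $\varepsilon$. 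Thus $\binom{a}{b}$ is a generalized binomial coefficient with integers $0\le a<b$, so one of the factors $a,a-1,\dots,a-b+1$ in its numerator vanishes and $\binom{a}{b}=0$. Hence the summand is zero.

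Summing over all summands gives $F_{n_1,n_2,\bI}(d_1,1)=0$; replacing $d_1$ by $-d_1$ and taking $d_2=-1$ (still odd, still with $n_2$ even) gives $F_{n_1,n_2,\bI}(-d_1,-1)=0$ by the same argument, and Theorem \ref{Ahat} then yields $\widehat{A}(H_{n_1,n_2}^{\bI}(d_1,1))=0$ for every $\bI$. I do not anticipate a genuine obstacle here: the only points that need care are that (\ref{Fi}) uses the \emph{generalized} binomial coefficient, which is in general nonzero when the top entry is a negative integer, so that step (ii)---the bound $a\ge 0$---really is needed; the bookkeeping of the degenerate term $r=0$; and the harmless observation that ``$n_2$ even'' is used in the form $n_2\ge 2$ (for $n_2=0$ the space $V$ degenerates to $\CC P^{n_1}$ and the statement is not claimed). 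Everything else is a one-line bound on the range of the top entry of a single binomial factor.
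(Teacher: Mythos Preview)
Your argument is correct and is essentially the same as the paper's own proof in Example~\ref{3.3}: both reduce the vanishing of $F_{n_1,n_2,\bI}(d_1,\pm 1)$ to the observation that the binomial factor $\binom{\frac{\pm 1+n_2-1}{2}+L-r}{\,n_2+L\,}$ has nonnegative integer top entry strictly smaller than the bottom when $n_2$ is even. The paper only writes out the case $d_2=1$ in detail and says ``Similarly'' for $F_{n_1,n_2,\bI}(-d_1,-1)$, whereas you handle both cases uniformly via $\varepsilon\in\{1,-1\}$; otherwise the proofs match line for line.
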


In fact, when $\bI=0$, Corollary \ref{vanishing} has a geometric interpretation. Observe that if $n_1\leq n_2$ and $d_2=1$, the smooth hypersurface $H_{n_1,n_2}^{\mathbf{0}}(d_1,1)$ can be described as the zero locus of equation
$$x_0^{d_1}y_0+x_1^{d_1}y_1+\cdots x_{n_1}^{d_1}y_{n_1}=0, $$
where $[x_0: x_1: \cdots: x_{n_1}]$ and $[y_0: y_1: \cdots: y_{n_2}]$ are the homogeneous coordinates on $\CC P^{n_1}$, reps. $\CC P^{n_2}$. There exists a natural circle action on $H_{n_1,n_2}^{\mathbf{0}}(d_1,1)$ defined by
$$\lambda\cdot [x_0: x_1: \cdots: x_{n_1}]=[x_0: \lambda x_1: \cdots: \lambda x_{n_1}] $$
$$\lambda\cdot [y_0: y_1: \cdots: y_{n_2}]=[y_0: \lambda^{-d_1} y_1: \cdots: \lambda^{-d_1} y_{n_2}]$$
where $\lambda\in S^1$.

Naturally we would like to ask
\begin{problem}   Does there exist a non-trivial circle action on $H_{n_1,n_2}^{\bI}(d_1,1)$ for $\bI\neq {\bf 0}$?
\end{problem}


\vskip.3cm

Assume $H_{n_1,n_2}^{\mathbf{I}}(d_1,d_2)$ is spin and $\dim H_{n_1,n_2}^{\mathbf{I}}(d_1,d_2)\geq5$.  By \cite[Theorem A]{KM}, for any oriented manifold $M^{2n}$, any codimension 2 homology class is represented by a submanifold $K\subset M$, and $(M,K)$ is $n$-connected. In the following, let us assume $H_{n_1,n_2}^{\bI}(d_1,d_2)$ to be simply connected. 

Applying Stolz theorem \cite{Sto}, $H_{n_1,n_2}^{\mathbf{I}}(d_1,d_2)$  admits a Riemannian metric of positive scalar curvature (PSC) if and only if  $ \alpha(H_{n_1,n_2}^{\mathbf{I}}(d_1,d_2))=0$.

\begin{corollary}
The spin twisted Milnor hypersurface admits a Riemannian metric of PSC iff 

\noindent $F_{n_1,n_2, \bI}(d_1,d_2)\equiv 0 \ \mod \ 2$. 
\end{corollary}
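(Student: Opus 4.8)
The plan is to stitch together the pieces already in place: the statement concerns PSC metrics on a simply connected closed spin manifold of dimension $\geq 5$, so Stolz's theorem \cite{Sto} applies and says that such a metric exists if and only if the Atiyah-Milnor-Singer $\alpha$-invariant vanishes. Thus the whole question reduces to deciding when $\alpha(H_{n_1,n_2}^{\bI}(d_1,d_2))=0$, and Theorem \ref{alpha} already answers this congruence-wise. I would present the argument in three short steps.

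First I would pin down the dimension and the target group. The twisted Milnor hypersurface sits inside the complex $(n_1+n_2)$-dimensional manifold $V$ as a divisor, so $\dim_{\R} H_{n_1,n_2}^{\bI}(d_1,d_2)=2(n_1+n_2)-2$. In the range relevant to Theorem \ref{alpha}, namely $n_1+n_2=4k+2$, this dimension is $8k+2$, and $KO_{8k+2}(pt)=\Z_2$. Hence $\alpha(H_{n_1,n_2}^{\bI}(d_1,d_2))$ is an element of $\Z_2$, and the assertion ``$\alpha=0$'' is literally the same as ``$\alpha\equiv 0 \bmod 2$''.

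Next I would invoke Theorem \ref{alpha}, which (via Zhang's analytic Rokhlin congruence formula and the explicit evaluation packaged into $F_{n_1,n_2,\bI}$) gives $\alpha(H_{n_1,n_2}^{\bI}(d_1,d_2))\equiv F_{n_1,n_2,\bI}(d_1,d_2)\bmod 2$. Combined with the previous step, $\alpha(H_{n_1,n_2}^{\bI}(d_1,d_2))=0$ in $KO_{8k+2}(pt)$ if and only if $F_{n_1,n_2,\bI}(d_1,d_2)\equiv 0\bmod 2$. Finally I would check that the hypotheses of Stolz's theorem are genuinely satisfied: the manifold is closed; it is spin in the sense of the Convention, where the induced spin structure from the spin$^c$ structure of $V$ is a bona fide spin structure, so Stolz applies; its dimension $8k+2$ is $\geq 5$ by the standing assumption; and it is simply connected, again by the standing assumption recorded just before the corollary (together with the Kervaire-Milnor connectivity of the pair and $\pi_1(V)=0$). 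Applying Stolz's theorem then yields the stated equivalence.

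I do not expect any real obstacle here: all the substance was already carried out in Theorem \ref{alpha} and in the setup establishing simple connectivity. If anything needs care, it is the bookkeeping that ensures Stolz's theorem is legitimately applicable, i.e.\ the simple-connectedness and the dimension bound $\geq 5$ (which, given $\dim=8k+2$, just amounts to $k\geq 1$); but both are folded into the hypotheses stated in the paragraph preceding the corollary, so the proof is essentially a one-line combination of Theorem \ref{alpha} with \cite{Sto}.
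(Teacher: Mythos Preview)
Your proposal is correct and matches the paper's approach exactly: the corollary is stated without proof in the paper, being an immediate consequence of the preceding paragraph (Stolz's theorem under the standing hypotheses of simple connectivity and $\dim\geq 5$) combined with Theorem~\ref{alpha}. Your write-up spells out the same logic with the added bookkeeping that $\alpha$ lands in $KO_{8k+2}(pt)=\Z_2$, which is precisely what is needed.
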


Motivated by Zhang's results in \cite{Zh96E}, we use the dyadic expansion coefficients to characterize the existence of Riemannian metric of PSC for twisted Milnor hypersurfaces. Let $a_{k}(n)$ be  the coefficient in the dyadic expansion of $n\in\Z$:
$$
n=a_{0}(n)+a_{1}(n)2^{1}+a_{2}(n)2^{2}+\cdots+a_{k}(n)2^{k}, \exists\ k\in \Z.
$$
We are able to give the characterisation for the existence of PSC on two types of twisted Milnor hypersurfaces.
\begin{corollary} [\protect Corollary \ref{n_1=1}]
Assume $n_1=1$,\ $n_1+n_2\equiv 2\ \mod\ 4$, and $k_1=-\frac{n_1+1-d_1-\sigma_1}{2},$\ $k_2=-\frac{n_2+1-d_2}{2}$ are integers. Then the spin $H_{ 1,n_2}^{\mathbf{I}}(d_1,d_2)$ does not admit a Riemannian metric of PSC if and only if one of the following conditions holds,
\begin{itemize}
  \item $k_{2}\geq 0$, $k_{2}\equiv 0\ \mod\ 4$, $k_{1}\equiv 0\ \mod\ 2$, and $\forall\ i, a_{i}([\frac{k_{2}}{4}])+a_{i}([\frac{n_{2}}{4}])\leq 1$;
  \item $k_{2}\geq 0$, $k_{2}\equiv 1\ \mod\ 4$, $\sigma_{1}\equiv 1\ \mod\ 2$ , and $\forall\ i, a_{i}([\frac{k_{2}}{4}])+a_{i}([\frac{n_{2}}{4}])\leq 1$;
  \item $k_{2}\geq 0$, $k_{2}\equiv 2\ \mod\ 4$, $k_{1}+\sigma_{1}\equiv 0\ \mod\ 2$, and $\forall\ i, a_{i}([\frac{k_{2}}{4}])+a_{i}([\frac{n_{2}}{4}])\leq 1$;
  \item $k_{2}\leq -n_{2}-1$, $-k_{2}\equiv 0\ \mod\ 4$, $k_{1}\equiv 0\ \mod\ 2$, and $\forall\ i,a_{i}([\frac{-k_{2}-1-n_{2}}{4}])+a_{i}([\frac{n_{2} }{4}])\leq 1$;
  \item $k_{2}\leq -n_{2}-1$, $-k_{2}\equiv 2\ \mod\ 4$, $k_{1}+\sigma_{1}\equiv 0\ \mod\ 2$, $\forall\ i, a_{i}([\frac{-k_{2}-1-n_{2}}{4}])+a_{i}([\frac{n_{2} }{4}])\leq 1$;
  \item $k_{2}\leq -n_{2}-1$, $-k_{2}\equiv 3\ \mod\ 4$, $\sigma_{1}\equiv 1\ \mod\ 2,$ and $\forall\ i, a_{i}([\frac{-k_{2}-1-n_{2}}{4}])+a_{i}([\frac{n_{2} }{4}])\leq 1$.
\end{itemize}
\end{corollary}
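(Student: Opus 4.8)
The plan is to combine the criterion established just above — the spin $H_{n_1,n_2}^{\bI}(d_1,d_2)$ carries a metric of positive scalar curvature if and only if $F_{n_1,n_2,\bI}(d_1,d_2)\equiv 0\pmod 2$ — with the explicit mod-$2$ evaluation of $F$ available when $n_1=1$, and then to run a $2$-adic analysis of the two binomial coefficients that appear. Since $n_1=1$ and $n_1+n_2\equiv 2\pmod 4$ force $n_2\equiv 1\pmod 4$, so that $\dim H_{1,n_2}^{\bI}(d_1,d_2)=2n_2\equiv 2\pmod 8$, Example~\ref{3.5} together with Theorem~\ref{alpha} gives
$$F_{1,n_2,\bI}(d_1,d_2)\equiv (k_1+1)\binom{n_2+k_2}{n_2}+\sigma_1\binom{n_2+k_2}{n_2+1}\pmod 2,$$
and the hypersurface fails to admit a PSC metric precisely when the right-hand side is odd. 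Writing $b_1,b_2\in\{0,1\}$ for the residues mod $2$ of the two binomial coefficients, the task becomes: describe, in terms of the dyadic digits of $k_2$ and $n_2$, when $(k_1+1)b_1+\sigma_1b_2$ is odd.

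First I would dispose of the range $-n_2\le k_2\le-1$: there $0\le n_2+k_2<n_2$, so $b_1=b_2=0$, $F$ is even, and a PSC metric exists — which is why that range is absent from the six bullets. For $k_2\ge 0$ the two binomials already have nonnegative entries; for $k_2\le-n_2-1$ I would rewrite them via $\binom{m}{j}=(-1)^j\binom{j-m-1}{j}$ as $(-1)^{n_2}\binom{-k_2-1}{n_2}$ and $(-1)^{n_2+1}\binom{-k_2}{n_2+1}$, which have nonnegative entries since $-k_2-1\ge n_2$ and whose parities are unaffected by the signs. In either regime Kummer's theorem applies: $\binom{a+b}{a}$ is odd iff $a$ and $b$ have disjoint binary supports. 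The structural input is that $n_2\equiv 1\pmod 4$ has dyadic digit $1$ in position $0$, digit $0$ in position $1$, and higher digits equal to those of $[n_2/4]$, whereas $n_2+1\equiv 2\pmod 4$ has digit $0$ in position $0$, digit $1$ in position $1$, and the same higher digits. Running over the residue of $k_2$ (respectively of $-k_2$) modulo $4$, one finds: (a) in two of the four residues a forced digit clash at position $0$ or position $1$ kills one of $b_1,b_2$, and $F$ is then automatically even; (b) when $k_2\equiv 0\pmod 4$, $b_2$ is forced even and $b_1$ is odd iff $[k_2/4]$ and $[n_2/4]$ have disjoint supports, i.e. $a_i([k_2/4])+a_i([n_2/4])\le 1$ for all $i$; (c) when $k_2\equiv 2\pmod 4$, both $b_1$ and $b_2$ are odd iff that same disjointness holds — the point being that in $\binom{n_2+k_2}{n_2+1}=\binom{(n_2+1)+(k_2-1)}{n_2+1}$ the shift from $k_2$ to $k_2-1$ alters only the two lowest dyadic digits and leaves $[k_2/4]$ fixed when $k_2\equiv 2\pmod 4$. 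In the range $k_2\le-n_2-1$ the quantity $-k_2-1-n_2$ plays the role of $k_2$ and $[(-k_2-1-n_2)/4]$ that of $[k_2/4]$.

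Substituting the resulting pairs $(b_1,b_2)$ into $(k_1+1)b_1+\sigma_1b_2$ and asking when it is odd then yields exactly the six listed conditions: for $k_2\ge 0$, the residues $k_2\equiv 0,1,2\pmod 4$ give respectively the extra parity constraints $k_1\equiv 0$, $\sigma_1\equiv 1$, $k_1+\sigma_1\equiv 0\pmod 2$, together with disjointness of $[k_2/4]$ and $[n_2/4]$; for $k_2\le-n_2-1$, the residues $-k_2\equiv 0,2,3\pmod 4$ give $k_1\equiv 0$, $k_1+\sigma_1\equiv 0$, $\sigma_1\equiv 1\pmod 2$, together with disjointness of $[(-k_2-1-n_2)/4]$ and $[n_2/4]$; in the remaining residues $F$ is even. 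The main obstacle is the digit bookkeeping of step (c): checking that in the residue $k_2\equiv 2\pmod 4$ (and its negative analogue $-k_2\equiv 2\pmod 4$) the two coefficients $\binom{n_2+k_2}{n_2}$ and $\binom{n_2+k_2}{n_2+1}$ have the \emph{same} parity, so that their contributions merge into the single congruence $k_1+\sigma_1\equiv 0\pmod 2$, together with carefully tracking the off-by-one carries when passing between $\binom{n_2+k_2}{n_2}$, $\binom{n_2+k_2}{n_2+1}$ and their nonnegative-entry reductions when $k_2\le-n_2-1$.
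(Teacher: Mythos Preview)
Your approach is the same as the paper's: both start from the mod-$2$ formula of Example~\ref{3.5}, split into the ranges $k_2\ge0$, $-n_2\le k_2<0$, and $k_2\le-n_2-1$ (using the reflection $\binom{m}{j}=(-1)^j\binom{j-m-1}{j}$ in the last range), and then case on $k_2\bmod4$ with Lucas/Kummer, exploiting $n_2\equiv1\pmod4$ to reduce the binomial parities to the digit-disjointness of $[k_2/4]$ (resp.\ $[(-k_2-1-n_2)/4]$) and $[n_2/4]$. One expository slip to correct: your clause (a) asserts that in \emph{two} residues $F$ is automatically even, but in fact only $k_2\equiv3\pmod4$ (resp.\ $-k_2\equiv1$) kills both $b_1$ and $b_2$; for $k_2\equiv1$ (resp.\ $-k_2\equiv3$) only $b_1$ vanishes and $F\equiv\sigma_1 b_2$ --- which is precisely what your final six-bullet summary correctly records, so this is an inconsistency in the narrative rather than in the argument itself.
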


\begin{corollary}  [\protect Corollary \ref{n_1=2}]
Assume  $n_1=2$,\ $n_1+n_2\equiv 2\ \mod\ 4$,  and $k_1=-\frac{n_1+1-d_1-\sigma_1}{2},$ \ $k_2=-\frac{n_2+1-d_2}{2}$ are integers. Then the spin $H_{2,n_2}^{\mathbf{I}}(d_1,d_2)$ does not admit a Riemannian metric of PSC if and only if one of the following condition holds
\begin{itemize}
  \item $k_{2}\geq 0$, $k_{2}\equiv 0\ \mod\ 4$, $k_{1}\equiv 0\ \textrm{or}\ 1\ \mod\ 4$, $ \forall\ i, a_{i}([\frac{k_{2} }{4}])+a_{i}(\frac{n_{2}}{4})\leq 1$;
  \item $k_{2}\geq 0$, $k_{2}\equiv 1\ \mod\ 4$, $ \binom{k_1+2}{2}  +\frac{\sigma_1^2-2\sigma_2}{2}+\frac{(2k_1+3)\sigma_1}{2}  \equiv 1\  \mod\ 2,\forall\ i, a_{i}([\frac{k_{2} }{4}])+a_{i}(\frac{n_{2}}{4})\leq 1;$
  \item  $k_{2}\geq 0$, $k_{2}\equiv 2\ \mod\ 4$, $  \binom{k_1+2}{2} +\sigma_1^2-\sigma_2 \equiv 1\  \mod\ 2,\forall\ i, a_{i}([\frac{k_{2} }{4}])+a_{i}(\frac{n_{2}}{4})\leq 1;$
  \item $ k_{2}\geq 0$, $k_{2}\equiv 3\ \mod\ 4$, $ \binom{ k_{1}+2} {2}+\frac{\sigma_{1}(2k_{1}+3-\sigma_{1})}{2} \equiv 1\  \mod\ 2,\forall\ i, a_{i}([\frac{k_{2} }{4}])+a_{i}(\frac{n_{2}}{4})\leq 1;$
   \item $k_{2}= -n_{2}-1$, $ \frac{(k_{1}+1)(k_{1}+2)}{2}+\frac{\sigma_{1}(\sigma_{1}-2k_{1}-3)}{2} \equiv 1\  \mod\ 2;$

  \item $k_{2}\leq -n_{2}-2$, $-k_{2}\equiv 0\ \mod\ 4$, $ k_{1}\equiv 0\ \mathrm{or}\ 1\ \mod\ 4,\forall\ i, a_{i}([\frac{-k_{2}-1-n_{2}}{4}])+a_{i}(\frac{n_{2}}{4})\leq 1;$
  \item $k_{2}\leq -n_{2}-2$, $-k_{2}\equiv 1\ \mod\ 4$, $ \binom{k_1+2}{2}+\frac{\sigma_{1}(\sigma_{1}-2k_{1}-3)}{2} \equiv  1\ \mod\ 2,\forall\ i, a_{i}([\frac{-k_{2}-1-n_{2}}{4}])+a_{i}(\frac{n_{2}}{4})\leq 1;
 $
  \item  $k_{2}\leq -n_{2}-2$, $-k_{2}\equiv 2\ \mod\ 4$, $ \binom{k_1+2}{2}+\sigma_1^2-\sigma_2 \equiv  1\ \mod\ 2,\forall \ i,a_{i}([\frac{-k_{2}-1-n_{2}}{4}])+a_{i}(\frac{n_{2}}{4})\leq 1;$
  \item$k_{2}\leq -n_{2}-2$, $-k_{2}\equiv 3\ \mod\ 4$, $ \binom{k_1+2}{2}+\frac{(2k_1+3+3\sigma_1)\sigma_1}{2}-\sigma_2 \equiv  1\ \mod\ 2,\forall\ i, a_{i}([\frac{-k_{2}-1-n_{2}}{4}])\\+a_{i}(\frac{n_{2}}{4})\leq 1;$
\end{itemize}
\end{corollary}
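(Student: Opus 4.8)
The plan is to turn the statement into a congruence for $\alpha$ modulo $2$ and then run a Lucas-type dyadic computation. By Stolz's theorem \cite{Sto} together with the corollary above characterising positive scalar curvature --- a simply connected spin twisted Milnor hypersurface of dimension $\ge 5$ admits such a metric iff $F_{n_1,n_2,\bI}(d_1,d_2)\equiv 0\pmod 2$ --- the simply connected spin manifold $H_{2,n_2}^{\bI}(d_1,d_2)$ fails to carry such a metric exactly when $F_{2,n_2,\bI}(d_1,d_2)\equiv 1\pmod 2$, that is, by Theorem~\ref{alpha}, exactly when $\alpha(H_{2,n_2}^{\bI}(d_1,d_2))\equiv 1\pmod 2$. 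Since $n_1=2$ and $n_1+n_2\equiv 2\pmod 4$ force $n_2\equiv 0\pmod 4$, and the spin hypothesis of Proposition~\ref{spin} is precisely the integrality of $k_1=-\tfrac{3-d_1-\sigma_1}{2}$ and $k_2=-\tfrac{n_2+1-d_2}{2}$, I substitute the formula of Example~\ref{3.6}:
\[\alpha\equiv\binom{k_1+2}{2}\binom{n_2+k_2}{n_2}+c_1\binom{n_2+k_2}{n_2+1}+c_2\binom{n_2+k_2+1}{n_2+2}+c_3\binom{n_2+k_2}{n_2+2}\pmod 2,\]
with $c_1=-\tfrac{\sigma_1^2-2\sigma_2}{2}+\tfrac{(2k_1+3)\sigma_1}{2}$, $c_2=\sigma_1^2-2\sigma_2$, $c_3=\sigma_2$; all three are integers, using $\sigma_1^2-2\sigma_2=\sum_j i_j^2\equiv\sigma_1\pmod 2$ for $c_1$.

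The heart of the argument is the mod-$2$ evaluation of these four binomials. Because $n_2\equiv 0\pmod 4$, Lucas' theorem gives, for every integer $a$,
\[\binom{a}{n_2}\equiv\binom{[a/4]}{n_2/4},\quad\binom{a}{n_2+1}\equiv\varepsilon_0(a)\binom{[a/4]}{n_2/4},\quad\binom{a}{n_2+2}\equiv\varepsilon_1(a)\binom{[a/4]}{n_2/4}\pmod 2,\]
where $\varepsilon_0(a),\varepsilon_1(a)\in\{0,1\}$ are the two lowest binary digits of $a$. For $k_2\ge 0$ one takes $a=n_2+k_2$ or $a=n_2+k_2+1$; here $[a/4]=[k_2/4]+n_2/4$ except when $k_2\equiv 3\pmod 4$ and one adds $1$ (a single carry), and $\binom{[a/4]}{n_2/4}\not\equiv 0$ is equivalent to $a_i([k_2/4])+a_i(n_2/4)\le 1$ for all $i$. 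For $k_2<0$ one rewrites the binomials with negative top via $\binom{-j}{m}=(-1)^m\binom{j+m-1}{m}$, using that $n_2$ is even; this shows: for $-n_2\le k_2\le -1$ all four binomials vanish mod $2$ (so PSC always exists and this range is absent from the statement); for $k_2=-n_2-1$ the three binomials with top $n_2+k_2$ become $\pm 1$ while the fourth is $0$; and for $k_2\le -n_2-2$ the relevant quantity is $\binom{[(-k_2-1)/4]}{n_2/4}$ and its shifts, which is non-zero mod $2$ iff $a_i([\tfrac{-k_2-1-n_2}{4}])+a_i(n_2/4)\le 1$ for all $i$.

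Feeding these reductions into the displayed formula for $\alpha$, one finds in each of the nine cases (namely $k_2\ge 0$ with $k_2\bmod 4\in\{0,1,2,3\}$; $k_2=-n_2-1$; and $k_2\le -n_2-2$ with $(-k_2)\bmod 4\in\{0,1,2,3\}$) that $\alpha\equiv 1\pmod 2$ is equivalent to a product condition: the relevant digit condition holds, and an explicit integer combination of $\binom{k_1+2}{2},c_1,c_2,c_3$ is odd. The combination is $\binom{k_1+2}{2}$ when $k_2\equiv 0$, $\binom{k_1+2}{2}+c_1+c_2$ when $k_2\equiv 1$, $\binom{k_1+2}{2}+c_2+c_3$ when $k_2\equiv 2$, and $\binom{k_1+2}{2}+c_1+c_3$ when $k_2\equiv 3\pmod 4$ (and $\binom{k_1+2}{2}+c_1+c_3$ at $k_2=-n_2-1$, with no digit condition since the binomials are $\pm 1$). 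It remains to rewrite these: $\binom{k_1+2}{2}$ is odd iff $k_1\equiv 0$ or $1\pmod 4$; $c_1+c_2=\tfrac{\sigma_1^2-2\sigma_2}{2}+\tfrac{(2k_1+3)\sigma_1}{2}$; $c_2+c_3=\sigma_1^2-\sigma_2$; and $c_1+c_3\equiv\tfrac{\sigma_1(2k_1+3-\sigma_1)}{2}\pmod 2$, the last congruence using $2\sigma_2\equiv 0$. These are exactly the four types of conditions in the statement (each written there in one of several equivalent forms).

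I expect no single conceptual obstacle: the work is essentially bookkeeping. The delicate points, where errors would most easily creep in, are (i) tracking the carry when $1$ is added in the subcases $k_2\equiv 3\pmod 4$; (ii) handling the sign $(-1)^m$ of the negative-top binomials together with the evenness of $n_2$, which is what makes all signs collapse modulo $2$; and (iii) matching the nine coefficient conditions of the statement term by term, for which one repeatedly invokes the mod-$2$ (and occasionally mod-$4$) identities among $\sigma_1$, $\sigma_2$ and $\sum_j i_j^2=\sigma_1^2-2\sigma_2$.
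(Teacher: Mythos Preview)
Your approach is essentially the same as the paper's: both invoke Stolz's theorem to reduce to $\alpha\equiv 1\pmod 2$, substitute the formula from Example~\ref{3.6}, split into cases according to the sign of $k_2$ and the residue of $k_2$ (respectively $-k_2$) modulo $4$, and use Lucas' theorem (in the paper via Proposition~\ref{16}) to reduce each of the four binomial coefficients modulo $2$ to a single factor $\binom{[\cdot/4]+n_2/4}{n_2/4}$ times a $\{0,1\}$ indicator; the resulting coefficient in front of this factor is then matched to the listed conditions. The paper carries out the $k_2\ge 0$ subcases explicitly and declares the remaining ones ``similar'', while you sketch the negative range via $\binom{-j}{m}=(-1)^m\binom{j+m-1}{m}$---note only that after this rewrite the four tops become $-k_2-1,\ -k_2,\ -k_2,\ -k_2+1$ (not all equal), which is exactly the bookkeeping point you flag in (ii).
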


$\, $

This paper is organized as follows. In Section 2, we give some topological preliminaries on twisted Milnor hypersurfaces.
In Section 3, we give explicit formulas for $\widehat{A}$-genus and $\alpha$ invariant of $H_{n_1,n_2}^{\bI}(d_1,d_2)$ by $F_{n_1,n_2,\bI}(d_1,d_2)$. During the computation, we use the binomial number $A(n,l)$ and its properties.
In Section 4, we give some applications of these two invariants to twisted Milnor hypersurfaces.

$\, $

\noindent {\bf Acknowledgement} Fei Han would like to thank Prof. Kefeng Liu and Prof. Weiping Zhang for helpful discussions. 
Jingfang Lian  and Zhi L\"u are partially supported by the grant from NSFC (No. 11971112); Fei Han is partially supported by the grant AcRF R-146- 000-218-112 from National University of Singapore; Hao Li is partially supported by the grant R-146-000-322-114 from National University of Singapore.

\vskip .5cm

\section{Topological preliminaries on twisted Milnor hypersurfaces}\label{toric}

Let $P^n$ be a  simple convex polytope of dimension $n$. A {\bf quasitoric manifold} \cite{DJ} $M^{2n}$ over $P^n$ is a smooth $T^n$-manifold $M^{2n}$  satisfying the following two conditions :
\begin{enumerate}
  \item the action is locally standard;
  \item there is continuous projection $\pi: M^{2n}\rightarrow P^n$ whose fibers are $T^n$-orbits.
\end{enumerate}
Note that $(2)$ says that the orbit space of $T^n$ on $M^{2n}$ is homeomorphic to $P^n$.

Denote by $\mathcal{F}$ the set of  codimension one faces of $P^n$. For every $F\in \mathcal{F}, x\in \mathrm{Int}(F)$. The isotropy group of $x$ is a codimension one subgroup of $T^n$; this isotropy subgroup is determined by a primitive vector $v\in \Z^{n}$; thus define a {\bf characteristic function }$\lambda:\mathcal{F}\rightarrow \Z^{n}$, and we call its corresponding matrix the  {\bf characteristic matrix} (detail can be found in \cite{DJ}.)

For example, identify $T^{n}$ with $T^{n+1}/\langle(g,\cdots,g),g\in T^n\rangle$ and $T^{n}$ acts on $\CC P^n$ in the usual manner, the quotient space is then the simplex $\Delta^n$, where $$\Delta^n=\{(x_1,\cdots,x_n)\in \R^n| x_i \geq 0,\ i=1,2,\cdots,n, \    x_1+\cdots+x_n \leq 1   \}.$$
Then $\CC P^n$
can be viewed a quasitoric manifold over the simplex $\Delta^n$ with characteristic matrix:
\begin{gather*}
\begin{pmatrix}
1     & 0 &\cdots & 0 & -1      \\
0     & 1 &\cdots &   & -1        \\
\vdots&   &\ddots &   & \vdots   \\
0     &   &       & 1 & -1      \\
\end{pmatrix}
\end{gather*}

Now let's consider more complicated combinatorics: the product of two simplices $\Delta^{n_1}\times\Delta^{n_2}$. For example, $\Delta^1\times\Delta^2$ looks like

\begin{center}
\begin{tikzpicture}
\draw (-0.5,0)--(1.5,0);

\draw  (2.7,0.3)--(3.3,-0.3);
\draw  (2.7,-0.3)--(3.3,0.3);

\draw (4,-1.5)--(7,-1.5);
\draw (4,-1.5)--(5.5,1.5);
\draw (7,-1.5)--(5.5,1.5);

\draw (7.6,0.1)--(8,0.1);
\draw (7.6,-0.1)--(8,-0.1);

\draw(9,1)--(12,1);
\draw (9,-1.5)--(12,-1.5);
\draw (9,1)--(9,-1.5);
\draw (12,1)--(12,-1.5);
\draw(9,1)--(11,1.5);
\draw (11,1.5)--(12,1);
\draw[dashed] (11,1.5)--(11,-1);
\draw[dashed] (9,-1.5)--(11,-1);
\draw[dashed] (12,-1.5)--(11,-1);
\end{tikzpicture}
\end{center}
\vskip.3cm
The product of two projective spacess $\CC P^{n_1}\times\CC P^{n_2}$ is a quasitoric manifold over the polytope $\Delta^{n_1}\times\Delta^{n_2}$ with the block diagonal characteristic matrix:
$\, $

\begin{gather*}
\begin{pmatrix}
\cooloverbrace{n_1}{1& 0 &\cdots & 0} & -1 &   &        &   &     \\
0     & 1 &\cdots &   & -1      &   &        &   &    \\
\vdots&   &\ddots &   & \vdots  &   &        &   &   \\
0     &   &       & 1 & -1      &   &        &   &   \\
      &   &       &   &      & 1 &        &   & -1 \\
      &   &       &   &      &   &\ddots  &   & \cdots \\
      &   &       &   &     &     &        & 1 & -1\\
\end{pmatrix}
\begin{matrix}
    \\
    \\
    \\
    \\
    \coolrightbrace{-1 \\ \cdots \\ -1}{n_2}
  \end{matrix}
\end{gather*}

Twisting the block diagonal characteristic matrix to be a block lower triangular characteristic matrices can give interesting new quasitoric manifolds. Let $V$ be a quasitoric manifold, whose corresponding  polytope is $ \Delta^{n_1}\times\Delta^{n_2}$ while the characteristic matrix is

\begin{gather*}
\begin{pmatrix}
\cooloverbrace{n_1}{1& 0 &\cdots & 0 }& -1 &   &        &   &     \\
0     & 1 &\cdots &   & -1      &   &        &   &   \\
\vdots&   &\ddots &   & \vdots  &   &        &   & \\
0     &   &       & 1 & -1      &   &        &   &   \\
      &   &       &   & i_1     & 1 &        &   & -1\\
      &   &       &   & \vdots  &   &\ddots  &   & \cdots \\
      &   &       &   & i_{n_2} &   &        & 1 & -1 \\
\end{pmatrix}
\begin{matrix}
    \\
    \\
    \\
    \\
    \coolrightbrace{-1 \\ \cdots \\ -1}{n_2}
  \end{matrix}
\end{gather*}

$V$ can also be considered as the total space of the projectivisation $$\CC P(\eta^{\otimes i_{1}}\oplus\cdots\oplus\eta^{\otimes i_{n_{2}}}\oplus\underline{\CC})\rightarrow \CC P^{n_1}$$( c.f. \cite[Section 7.8]{BP}), where $\eta$ is the tautological line bundle over $\CC P^{n_{1}}$. Denote by $\overline{\eta}$ the conjugate bundle of $\eta$. Let $\gamma$ be the tautological vertical line bundle over $V$. The manifold $V$ is the total space of  a bundle over $\CC P^{n_{1}}$ with fiber $\CC P^{n_{2}}$.

The  tangent bundle  and cohomology ring structure of $V$ are clear from the following theorem.

\begin{theorem}[\protect Borel and Hirzebruch \cite{BH58}]
Let $p:\CC P(\xi)\rightarrow X$ be the projectivization of a complex n-plane bundle $\xi$ over a complex manifold X, and  $\gamma$  the tautological vertical line bundle over $\CC P(\xi)$. Then there is an isomorphism of vector bundles
$$
T \CC P(\xi)\oplus\underline{\CC}\cong p^{*}T X\oplus(\overline{\gamma}\otimes p^{*}\xi),
$$
where $\underline{\CC}$ denotes a trivial line bundle over $\CC P(\xi)$. Futhermore,
$$
H^{*}(\CC P(\xi);\Z)\cong H^{*}(X)[c_1(\overline{\gamma})]/\langle c_{n}(\overline{\gamma}\otimes p^{*}\xi)\rangle.
$$

\end{theorem}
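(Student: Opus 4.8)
The plan is to establish the two assertions separately, in each case globalising the classical picture over a point (the Euler sequence for $\CC P^{n-1}$ and the ring $H^*(\CC P^{n-1};\Z)=\Z[t]/(t^n)$), with everything organised around the tautological sub-line-bundle $\gamma\subset p^*\xi$, whose fibre over the point of $\CC P(\xi)$ represented by a line $\ell\subset\xi_x$ is $\ell$ itself. I write $Q:=p^*\xi/\gamma$ for the rank-$(n-1)$ quotient, so that $0\to\gamma\to p^*\xi\to Q\to 0$.

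\textbf{Tangent bundle.} First I would split off the base directions: the submersion $p$ gives the exact sequence $0\to T_p\CC P(\xi)\to T\CC P(\xi)\to p^*TX\to 0$ of the vertical tangent bundle, which splits after choosing a Hermitian metric, so it remains to identify $T_p\CC P(\xi)$. Fibrewise the tangent space to $\mathbb P(\xi_x)$ at $[\ell]$ is $\Hom(\ell,\xi_x/\ell)$, and this globalises to $T_p\CC P(\xi)\cong\gamma^*\otimes Q$. Tensoring the defining sequence by $\gamma^*$ and using $\gamma^*\otimes\gamma\cong\underline{\CC}$ yields the exact sequence $0\to\underline{\CC}\to\gamma^*\otimes p^*\xi\to T_p\CC P(\xi)\to 0$, hence $T_p\CC P(\xi)\oplus\underline{\CC}\cong\gamma^*\otimes p^*\xi\cong\overline\gamma\otimes p^*\xi$, since $\gamma^*\cong\overline\gamma$ for a Hermitian line bundle. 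Adding back $p^*TX$ gives exactly $T\CC P(\xi)\oplus\underline{\CC}\cong p^*TX\oplus(\overline\gamma\otimes p^*\xi)$.

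\textbf{Cohomology ring.} Here I would apply Leray--Hirsch to $p:\CC P(\xi)\to X$ with the class $t:=c_1(\overline\gamma)$. Since $\overline\gamma$ restricts on each fibre $\CC P^{n-1}$ to $\mathcal O(1)$, the classes $1,t,\dots,t^{n-1}$ restrict to an integral basis of the fibre cohomology, so $H^*(\CC P(\xi);\Z)$ is a free $H^*(X)$-module on $1,t,\dots,t^{n-1}$. To pin down the ring structure I then need the single relation: the inclusion $\underline{\CC}\cong\gamma^*\otimes\gamma\hookrightarrow\gamma^*\otimes p^*\xi\cong\overline\gamma\otimes p^*\xi$ exhibits a nowhere-zero section of the rank-$n$ bundle $\overline\gamma\otimes p^*\xi$, so $c_n(\overline\gamma\otimes p^*\xi)=0$; expanding $c_n(\overline\gamma\otimes p^*\xi)=\prod_{i=1}^n(t+p^*x_i)=\sum_{j=0}^n t^{n-j}p^*c_j(\xi)$ with $x_i$ the Chern roots of $\xi$, this is a monic degree-$n$ relation for $t$ over $H^*(X)$. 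Hence the $H^*(X)$-algebra map $H^*(X)[t]/\langle c_n(\overline\gamma\otimes p^*\xi)\rangle\to H^*(\CC P(\xi);\Z)$, $t\mapsto c_1(\overline\gamma)$, is well defined, surjective by Leray--Hirsch, and injective because it sends the free basis $1,t,\dots,t^{n-1}$ of the left side to the free basis of the right side.

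\textbf{Main obstacle.} There is no genuinely hard computation; the delicate part is the conjugation/duality bookkeeping --- fixing once and for all that $\gamma$ is the tautological \emph{sub}bundle of $p^*\xi$ and that $\gamma^*\cong\overline\gamma$, so that $\overline\gamma$ (not $\gamma$) appears in both formulas --- together with checking that the Leray--Hirsch hypotheses genuinely hold (restriction of $c_1(\overline\gamma)$ generating the fibre cohomology, and freeness of $H^*(\CC P^{n-1};\Z)$). A purely algebraic alternative for the cohomology statement is to induct on the rank of $\xi$ through the iterated projective-bundle/flag description, but the Leray--Hirsch route is shorter.
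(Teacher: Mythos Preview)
Your argument is correct and is precisely the standard proof of this classical result. However, the paper does not prove this theorem at all: it is quoted as a known result of Borel and Hirzebruch with a citation to \cite{BH58}, and the paper simply applies it to the specific case $X=\CC P^{n_1}$, $\xi=\eta^{\otimes i_1}\oplus\cdots\oplus\eta^{\otimes i_{n_2}}\oplus\underline{\CC}$. So there is no ``paper's own proof'' to compare against; you have supplied a clean proof where the paper merely cites one.
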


\indent

In this paper, $X=\CC P^{n_{1}}, \xi=\eta^{\otimes i_{1}}\oplus\cdots\oplus\eta^{\otimes i_{n_{2}}}\oplus\underline{\CC}$. Let $u=p^{*}c_{1}(\overline{\eta}),v=c_{1}(\overline{\gamma})$. By the above theorem, we have
$$
T V\oplus \underline{\CC}\cong p^{*}T \CC P^{n_{1}}\oplus\overline{\gamma}\otimes p^{*}(\eta^{\otimes i_{1}}\oplus\cdots\oplus\eta^{\otimes i_{n_{2}}}\oplus\underline{\CC}),
$$
 $$
 H^{*}(V)\cong H^{*}(\CC P^{n_{1}};\Z)[v]/c_{n_2+1}(\overline{\gamma}\otimes p^{*}(\eta^{\otimes i_{1}}\oplus\cdots\oplus\eta^{\otimes i_{n_{2}}}\oplus\underline{\CC})).
 $$
 Furthermore
 $$\overline{\gamma}\otimes  p^{*}(\eta^{\otimes i_{1}}\oplus\cdots\oplus\eta^{\otimes i_{n_{2}}}\oplus\underline{\CC})=(\overline{\gamma}\otimes p^{*}\eta^{\otimes i_{1}})\oplus\cdots\oplus(\overline{\gamma}\otimes  p^{*}\eta^{\otimes i_{n_{2}}})\oplus(\overline{\gamma}\otimes\underline{\CC}).$$
 and
 $$c(\overline{\gamma}\otimes\eta^{\otimes i_{1}})=1+c_{1}(\overline{\gamma})+c_{1}(\eta^{\otimes i_{1}})=1+v-i_1u.$$
 Thus we have
 $$
 c(V)=(1+u)^{n_{1}+1}(1+v)\prod_{j=1}^{n_2}(1+v-i_{j}u).
 $$
 Therefore $$c_1(V)=(n_1+1-\sigma_1)u+(n_2+1)v$$ and
$$p(V)=(1+u^2)^{n_{1}+1}(1+(v-i_{1}u)^2)\cdots(1+(v-i_{n_{2}}u)^2)(1+v^2).
$$

We also have $H^{*}(\CC P^{n_{1}};\Z)\cong \Z[u]/\langle u^{n_{1}+1}\rangle$ and therefore
$$H^{*}(V)\cong \Z[u,v]/\langle u^{n_{1}+1},v(v-i_{1}u)\cdots (v-i_{n_{2}}u)\rangle$$
\vskip.2cm
The following result should be known to experts, although we did't find it in the literature. We state it here and give the proof.
\begin{lemma}
  $$\langle u^{n_{1}}v^{n_{2}},[V]\rangle=1.$$
 \end{lemma}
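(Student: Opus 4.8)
The plan is to evaluate the fundamental cohomology pairing $\langle u^{n_1}v^{n_2},[V]\rangle$ by working in the presentation $H^*(V)\cong \Z[u,v]/\langle u^{n_1+1},\,v(v-i_1u)\cdots(v-i_{n_2}u)\rangle$ established above, together with the fact that the top-degree class $u^{n_1}v^{n_2}$ is a generator of $H^{2(n_1+n_2)}(V,\Z)\cong\Z$ (which follows because $V$ is an orientable closed manifold of real dimension $2(n_1+n_2)$, the monomials $u^a v^b$ with $0\le a\le n_1$, $0\le b\le n_2$ form a free $\Z$-basis for $H^*(V)$ by the projective bundle formula, and $u^{n_1}v^{n_2}$ is the unique such monomial in top degree). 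So the claim is equivalent to saying that the isomorphism $H^{2(n_1+n_2)}(V)\cong\Z$ given by evaluation on $[V]$ sends $u^{n_1}v^{n_2}$ to $1$, i.e.\ that this monomial is itself a fundamental-class generator.

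The cleanest route is the Gysin/integration-along-the-fiber argument for the projective bundle $p:V=\CC P(\xi)\to \CC P^{n_1}$ with $\xi=\eta^{\otimes i_1}\oplus\cdots\oplus\eta^{\otimes i_{n_2}}\oplus\underline{\CC}$ of fiber dimension $n_2+1$. I would use that $p_!:H^*(V)\to H^{*-2n_2}(\CC P^{n_1})$ satisfies $p_!(v^{n_2})=1$ and more generally $p_!(v^{n_2+j})$ is $(-1)$ times the $j$-th Segre-type class of $\xi$; in particular $p_!(v^{n_2}\cdot p^*\omega)=\omega$ for any $\omega\in H^*(\CC P^{n_1})$ by the projection formula. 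Applying this to $\omega=u^{n_1}$ gives $p_!(u^{n_1}v^{n_2})=u^{n_1}$, which is the fundamental generator of $H^{2n_1}(\CC P^{n_1})$ evaluating to $1$ on $[\CC P^{n_1}]$. Combining with $\langle p_!x,[\CC P^{n_1}]\rangle=\langle x,[V]\rangle$ (compatibility of integration along the fiber with pushforward to a point, using that $p$ is a fiber bundle with closed oriented fibers) yields $\langle u^{n_1}v^{n_2},[V]\rangle=\langle u^{n_1},[\CC P^{n_1}]\rangle=1$, up to a sign that is fixed by our choice of orientation on $V$ as a complex (hence quasitoric) manifold.

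An alternative, more hands-on argument avoids Gysin maps: since $u^{n_1}v^{n_2}$ spans $H^{top}(V,\Z)$, it suffices to show it is a $\Z$-generator and not a proper multiple; but freeness of the monomial basis $\{u^av^b\}$ already forces $u^{n_1}v^{n_2}$ to be primitive, and then one only needs to pin down that the evaluation-on-$[V]$ map takes a primitive generator to $\pm1$, which is the defining property of the fundamental class of a closed oriented manifold. With the complex orientation normalizing the sign to $+1$, we are done. I would present the Gysin version as the main argument since it makes the normalization transparent, and remark that the $\bI=0$ case recovers the standard fact $\langle u^{n_1}v^{n_2},[\CC P^{n_1}\times\CC P^{n_2}]\rangle=1$.

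The main obstacle is bookkeeping of signs and orientations: one must check that the complex orientation on $V$, the fiber orientation used to define $p_!$, and the base orientation on $\CC P^{n_1}$ are mutually compatible so that the constant really is $+1$ and not $-1$. This is where I would be most careful — the rest is a routine application of the projective bundle theorem and the projection formula.
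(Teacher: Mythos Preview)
Your proof is correct and takes a genuinely different route from the paper. The paper argues via the Euler characteristic: using multiplicativity for the fibration $\CC P^{n_2}\to V\to\CC P^{n_1}$ it obtains $\chi(V)=(n_1+1)(n_2+1)$; it then checks that the top Chern class satisfies $c_{n_1+n_2}(V)=(n_1+1)(n_2+1)\,u^{n_1}v^{n_2}$ (using $u^{n_1+1}=0$ to kill all other contributions), and cancels the common factor via $\langle c_{\mathrm{top}}(TV),[V]\rangle=\chi(V)$. Your Gysin/fiber-integration argument is more direct and conceptually cleaner: the identity $p_!(v^{n_2})=1$ together with the projection formula immediately reduces the pairing on $V$ to $\langle u^{n_1},[\CC P^{n_1}]\rangle=1$, and you make the orientation compatibility explicit rather than leaving it implicit in the Chern-class normalization. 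The paper's approach has the virtue of relying only on elementary characteristic-class facts, but its one-line claim about $c_{n_1+n_2}(V)$ in the twisted case hides a small computation (one must also use the relation $v\prod_j(v-i_ju)=0$, not just $u^{n_1+1}=0$); your approach sidesteps this entirely and generalizes at once to any projective bundle over any closed oriented base.
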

 \begin{proof}
 We have fibration $\CC P^{n_{2}}\rightarrow V\rightarrow \CC P^{n_{1}}$. By the fibration property of Euler characteristic \cite[p. 481]{SpE},
we have $\chi(V)=\chi(\CC P^{n_{1}})\chi(\CC P^{n_{2}})$.

 And  since $H^{*}(\CC P^{n},\Z)=\Z[u]/\langle u^{n+1}\rangle$ and $\deg(u)=2$, so the odd dimension of $H^{*}(\CC P^{n},\Z)$  vanishes.
 So $$\chi(\CC P^{n})=\sum_{i=0}^{n}(-1)^{i}\dim(H^{i}(\CC P^{n},\Z))=n+1.$$
 Thus$$\chi(V)=(n_{1}+1)(n_{2}+1),$$
since $\cl{c_{n_1+n_2}(V),[V]}=\chi(V)$. And
 $$c_{n_1+n_2}(V)=(n_{1}+1)(n_{2}+1)u^{n_{1}}v^{n_{2}}$$ since  $u^{n_{1}+1}=0$.

Therefore
$$ \cl{(n_{1}+1)u^{n_{1}}(n_{2}+1)v^{n_{2}},[V]}=(n_{1}+1)(n_{2}+1)$$
and we have $\langle u^{n_{1}}v^{n_{2}},[V]\rangle=1$.
 \end{proof}
Since $H^{2(n_1+n_2)}(V)\cong \Z[u^{n_{1}}v^{n_{2}}],$ assume $u^{n_1-k}v^{n_{2}+k}=\beta_{k}u^{n_1}v^{n_{2}}$ for $0\leq k\leq n_1$.

 \begin{lemma}\label{beta}
 $$\beta_{k}=\sum\limits_{\substack{\forall 1\leq i\leq n_2,\ p_i\geq 0\\ p_1+p_2+\cdots+p_{n_{2}}=k}} i_{1}^{p_1}\cdots i_{n_{2}}^{p_{n_2}}.$$
 \end{lemma}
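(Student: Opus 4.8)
The plan is to compute $\beta_k$ directly from the defining relations of the cohomology ring $H^*(V)\cong \Z[u,v]/\langle u^{n_1+1},\, v(v-i_1u)\cdots(v-i_{n_2}u)\rangle$. The key relation is $v(v-i_1u)(v-i_2u)\cdots(v-i_{n_2}u)=0$ in $H^*(V)$, which lets us rewrite $v^{n_2+1}$ as a polynomial in $v$ of degree $\le n_2$ with coefficients in $\Z[u]$: expanding the product gives $v^{n_2+1} = \sum_{j=1}^{n_2}(-1)^{j+1}e_j(i_1,\dots,i_{n_2})\,u^j v^{n_2+1-j}$, where $e_j$ is the $j$-th elementary symmetric polynomial. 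Multiplying by $u^{n_1-1-k}$ (valid when $k\le n_1-1$; the case $k=0$ is trivial since $\beta_0=1$) turns this into a recursion for the classes $u^{n_1-k}v^{n_2+k}$ in terms of lower values of $k$.

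First I would set up the generating-function bookkeeping: since $u^{n_1+1}=0$, when we repeatedly substitute the relation for $v^{n_2+1}$ the only surviving monomial of top degree is $u^{n_1}v^{n_2}$, and $\beta_k$ is exactly the coefficient obtained. The cleanest route is to observe that the recursion $v^{n_2+1} = \sum_{j\ge 1}(-1)^{j+1}e_j u^j v^{n_2+1-j}$ is precisely the recursion satisfied by complete homogeneous symmetric polynomials: namely $h_k = \sum_{j=1}^{k}(-1)^{j+1}e_j h_{k-j}$ (Newton's identity relating $e$'s and $h$'s, equivalently $\big(\sum e_j(-t)^j\big)\big(\sum h_k t^k\big)=1$). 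So I would prove by induction on $k$ that $\beta_k = h_k(i_1,\dots,i_{n_2})$, where $h_k$ is the complete homogeneous symmetric polynomial of degree $k$, and then note that $h_k(i_1,\dots,i_{n_2}) = \sum_{p_1+\cdots+p_{n_2}=k} i_1^{p_1}\cdots i_{n_2}^{p_{n_2}}$ is exactly the claimed expression.

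Concretely, the inductive step: assume $u^{n_1-j}v^{n_2+j}=\beta_j u^{n_1}v^{n_2}$ with $\beta_j=h_j$ for all $j<k$. Starting from $u^{n_1-k}v^{n_2+k} = u^{n_1-k}v^{k-1}\cdot v^{n_2+1}$, apply the relation $v^{n_2+1}=\sum_{j=1}^{n_2}(-1)^{j+1}e_j u^j v^{n_2+1-j}$ to get $u^{n_1-k}v^{n_2+k} = \sum_{j=1}^{n_2}(-1)^{j+1}e_j\, u^{n_1-k+j}v^{n_2+k-j}=\sum_{j=1}^{k}(-1)^{j+1}e_j\,u^{n_1-(k-j)}v^{n_2+(k-j)}$, where terms with $j>k$ vanish because $u^{n_1-k+j}$ has exponent $>n_1$. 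By the induction hypothesis this equals $\Big(\sum_{j=1}^{k}(-1)^{j+1}e_j h_{k-j}\Big)u^{n_1}v^{n_2} = h_k\, u^{n_1}v^{n_2}$, using the identity $h_k=\sum_{j=1}^k(-1)^{j+1}e_j h_{k-j}$ and $h_0=1$. Finally, I would verify the base case $k=0$: $\beta_0=1=h_0$, which is immediate.

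I do not expect a serious obstacle here; the only mild subtlety is making sure the truncation $u^{n_1+1}=0$ is applied correctly so that the sum over $j$ stops at $j=k$ rather than $j=n_2$ (when $k<n_2$), and keeping the signs straight in the elementary-to-complete symmetric function identity. An alternative, even more streamlined, presentation would avoid symmetric-function language entirely and instead argue via the pushforward/Segre class formalism for the projective bundle $V=\CC P(\xi)\to\CC P^{n_1}$ — where $\sum_k \beta_k u^k$ is the total Segre class $s(\xi)$ restricted appropriately and one reads off $\beta_k$ as a Segre number — but the direct ring-relation computation above is self-contained and uses only the cohomology presentation already established in the excerpt.
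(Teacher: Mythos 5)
Your proof is correct and takes essentially the same approach as the paper: both derive the recursion $\sum_{i=0}^{k}(-1)^i\sigma_i\beta_{k-i}=0$ from the ring relation $v\prod_j(v-i_ju)=0$ together with $u^{n_1+1}=0$, and both then identify $\beta_k$ with the complete homogeneous symmetric polynomial $h_k(i_1,\dots,i_{n_2})$. The only stylistic difference is that you invoke Newton's identity $h_k=\sum_{j=1}^k(-1)^{j+1}e_jh_{k-j}$ as known, whereas the paper re-derives it on the spot by expanding $\prod_j(1-i_ju)\cdot\prod_j(1-i_ju)^{-1}=1$ modulo $u^{n_1+1}$.
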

\begin{proof}
If $k=0$, obviously $\beta_{0}=1$.

If $k>0$, since $ v(v-i_{1}u)\cdots (v-i_{n_{2}}u)=0$, we have
\begin{align*}
0&=v^k(v-i_{1}u)\cdots (v-i_{n_{2}}u)\\
&=v^{n_{2}+k}-\sigma_1 uv^{n_{2}+k-1}+\cdots+(-1)^k\sigma_k u^{k}v^{n_{2}}\\
&=(\beta_k-\sigma_1 \beta_{k-1}+\cdots+(-1)^k\sigma_k \beta_0)u^{k}v^{n_{2}}.
\end{align*}

Thus $\beta_k$ is the unique solution of the equation,
$$\beta_k-\sigma_1 \beta_{k-1}+\cdots+(-1)^k\sigma_k \beta_0=0.$$

On the other hand, since $u^{n_1+1}=0,$ we have

\begin{align*}
1&=(1-i_{1}u)\cdots (1-i_{n_{2}}u)\frac{1}{(1-i_{1}u)}\cdots \frac{1}{(1-i_{n_{2}})}\\
&=(\sum_{i=0}^{n_{2}}(-1)^i \sigma_i u^i)\prod_{j=1}^{n_{2}}(1+i_j u+\cdots+(i_j)^{n_1} u^{n_1})\\
&=(\sum_{i=0}^{n_{2}}(-1)^i \sigma_i u^i)\sum_{m=0}^{n_1}(\sum_{ p_1+\cdots+p_{n_2}=m} i_{1}^{p_1}\cdots i_{n_{2}}^{p_{n_2}})u^m\\
&=\sum_{k=0}^{n_1}\sum_{m+i=k}((-1)^i \sigma_i \beta'_m )u^{m+i}\\
&=\sum_{k=0}^{n_1}(\beta'_k+(-1)\sigma_1\beta'_{k-1}+\cdots+(-1)^k\sigma_k\beta'_0)u^k,
\end{align*}

where $\beta'_k=\!\sum\limits_{\substack{\forall 1\leq i\leq n_2,\ p_i\geq 0\\ p_1+p_2+\cdots+p_{n_{2}}=k}}\! i_{1}^{p_1}\cdots i_{n_{2}}^{p_{n_2}}$.
Thus $\beta'_k=\beta_k$.

\end{proof}

 \begin{remark}
 In this paper, we will mainly use the expression in Lemma \ref{beta}. One can also give an expression of $\beta_k$ by the elementary symmetry polynomials of $\{i_1,\cdots,i_{n_2}\}$,
 $$
 \beta_{k}=\!\!\sum_{p_{1}+2p_{2}+\cdots+n_{2}p_{n_{2}}=k}\!\!(-1)^{k-p}\sigma_{1}^{p_{1}}\sigma_{2}^{p_{2}}\cdots\sigma_{n_{2}}^{p_{n_{2}}}\frac{p!}{p_{1}!\cdots p_{n_{2}}!} ,$$
 where $p=p_{1}+\cdots+p_{n_{2}}$ and $\sigma_{i}=\sigma_{i}(i_{1},\cdots,i_{n_{2}})$ is the $i$-th elementary symmetry polynomial.
 \end{remark}

\begin{corollary}
$$\langle u^{n_{1}-k}v^{n_{2}+k},[V]\rangle=\beta_{k}=\sum\limits_{\substack{\forall 1\leq i\leq n_2,\ p_i\geq 0\\ p_1+p_2+\cdots+p_{n_{2}}=k}} i_{1}^{p_1}\cdots i_{n_{2}}^{p_{n_2}}.$$
\end{corollary}
\begin{proposition} \label{spincondition} If  there exists $\  k_{1},k_{2}\in\Z$ such that
$$d_1= 2k_{1}+n_1+1-\sigma_{1} ,d_2= 2k_{2}+n_{2}+1,$$ where $\sigma_1=\sum_{j=1}^{n_2}i_j,$ then $H_{n_1,n_2}^{\bI}(d_1,d_2)$ is a spin manifold carrying the induced spin structure (c.f. \cite{Zh96R}).
\end{proposition}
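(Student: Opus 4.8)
The plan is to show that the normal bundle of $H = H_{n_1,n_2}^{\bI}(d_1,d_2)$ in $V$ together with the spin$^c$ structure of $V$ produces a genuine spin structure on $H$. Recall that a spin$^c$ structure on a manifold is equivalent to a reduction of the structure group determined by a class $c \in H^2(-,\Z)$ reducing $w_2$; the canonical spin$^c$ structure of the complex manifold $V$ has $c = c_1(V) = (n_1+1-\sigma_1)u + (n_2+1)v$. A spin$^c$ structure refines to a spin structure precisely when the associated class $c$ is even, i.e. lies in $2H^2(-,\Z)$ (together with the corresponding torsion-freeness, which is automatic here since $H^2(V,\Z)$ is free and, by the Lefschetz hyperplane theorem, $i^* : H^2(V,\Z) \to H^2(H,\Z)$ is injective in the relevant range).

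First I would compute $w_2(H)$ via the adjunction formula: since $H$ is Poincar\'e dual to $d_1 u + d_2 v$, its normal bundle is $N = i^*\mathcal{O}_V(d_1 u + d_2 v)$, so $TV|_H = TH \oplus N$ and hence $c_1(TH) = i^*(c_1(V) - (d_1 u + d_2 v)) = i^*\big((n_1+1-\sigma_1 - d_1)u + (n_2+1-d_2)v\big)$. Reducing mod $2$, $w_2(H) = c_1(TH) \bmod 2$. Substituting the hypotheses $d_1 = 2k_1 + n_1 + 1 - \sigma_1$ and $d_2 = 2k_2 + n_2 + 1$ gives $n_1+1-\sigma_1-d_1 = -2k_1$ and $n_2+1-d_2 = -2k_2$, both even, so $c_1(TH) = i^*(-2k_1 u - 2k_2 v)$ is an even class and $w_2(H) = 0$. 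This is the heart of the argument.

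Next I would explain the induced spin structure concretely rather than just citing $w_2 = 0$: the canonical spin$^c$ structure of $V$ restricts to a spin$^c$ structure on $H$ whose determinant line bundle is $i^*\det(TV|_H)\otimes N^{-1}$... more precisely, the spin$^c$ structure on $TH$ inherited from the complex structure on $V$ has determinant class $c_1(TH) = i^*(-2k_1 u - 2k_2 v) = 2\,i^*(-k_1 u - k_2 v)$. Tensoring this spin$^c$ structure by the line bundle with first Chern class $-i^*(-k_1 u - k_2 v)$ (which is pulled back from $V$, hence well-defined) yields a spin$^c$ structure with trivial determinant, i.e. a spin structure on $H$. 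This makes explicit the phrase ``induced spin structure from the spin$^c$ structure of $V$.''

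The main obstacle is not any deep computation but rather being careful about two points: (i) that working mod $2$ on $H^2(H,\Z)$ is legitimate, for which I use that $i^*$ is injective on $H^2(-,\Z)$ (Lefschetz, valid for $n_1+n_2 > 3$; the low-dimensional cases can be checked by hand or are vacuous for the spin question), so an even class on $V$ restricts to an even class on $H$; and (ii) pinning down the convention for the spin$^c$ structure on a hypersurface induced from an ambient complex manifold, i.e. that $TH$ acquires a spin$^c$ structure with $c_1 = c_1(V)|_H - [H]|_H$ via the splitting $TV|_H \cong TH \oplus N$ and $N \cong i^*\mathcal{O}_V(d_1u+d_2v)$, so that $c_1(N) = i^*(d_1 u + d_2 v)$. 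Once these bookkeeping points are fixed, the proposition follows immediately from the substitution in the second paragraph.
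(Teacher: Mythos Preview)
Your proposal is correct and follows essentially the same route as the paper: both compute $c_1(TH)=i^*\big((n_1+1-\sigma_1-d_1)u+(n_2+1-d_2)v\big)$ via the adjunction splitting $TV|_H\cong TH\oplus N$ and then observe that the hypotheses make this class even, hence $w_2(H)=0$. Your additional paragraph unpacking the ``induced spin structure'' by tensoring the restricted spin$^c$ structure down to trivial determinant is more explicit than the paper, which simply cites \cite{Zh96R}; note however that your concern (i) is unnecessary, since $i^*(2x)=2\,i^*(x)$ shows that an even class always restricts to an even class without any appeal to Lefschetz.
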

\begin{proof}
Since $V$ is a complex manifold, it carries a canonical spin$^c$ structure \cite{KT}.

As $H_{n_1,n_2}^{\bI}(d_1,d_2)$ is \poincare\ dual to $d_1u+d_2v$, by adjunction formula, there exists a complex line bundle $\xi$ over $V$ with $c_{1}(\xi)=d_1u+d_2v$,
such that the normal bundle $\nu$ of the inclusion $H_{n_1,n_2}^{\bI}(d_1,d_2)\overset{i}{\hookrightarrow} V$ is the pullback of $\xi$.

Therefore we have
 $$
 TV|_{H_{n_1,n_2}^{\bI}(d_1,d_2)}\cong TH_{n_1,n_2}^{\bI}(d_1,d_2)\oplus \nu\cong TH_{n_1,n_2}^{\bI}(d_1,d_2)\oplus i^{*}(\xi)
 $$
 and
 $$
 i^{*}c(V)=c(H_{n_1,n_2}^{\bI}(d_1,d_2))i^{*}c(\nu)
 $$
 $$
 \Rightarrow c(H_{n_1,n_2}^{\bI}(d_1,d_2))=i^{*}\{c(V)(1+d_1u+d_2v)^{-1}\}
 $$
 $$
 \Rightarrow
 c_{1}(H_{n_1,n_2}^{\bI}(d_1,d_2))=(n_1+1-\sigma_1-d_1)i^{*}u+(n_2+1-d_2)i^{*}v.
 $$

 Thus $\omega_2\equiv c_1 \equiv 0 \ \mod \ 2$, $H_{n_1,n_2}^{\bI}(d_1,d_2)$ is spin.

\end{proof}
\begin{remark}
If $\bI$ is negative and $d_1,d_2$ are positive, then $H_{n_1,n_2}^{\bI}(d_1,d_2)$ is a hyperplane section of $V$.
By the Lefschetz hyperplane theorem (c.f. \cite{Le}), $i^* : H^2(V, \Z)\rightarrow H^{2}(H_{n_1,n_2}^{\bI}(d_1,d_2), \Z)$ is an isomorphism for $n_1+n_2>3$. Then the above proposition actually gives the necessary and suffcient condition for $H_{n_1,n_2}^{\bI}(d_1,d_2)$ to be spin. Furthermore, the embedding $i$ induces an isomorphism on their fundamental groups. Since $\pi_1(V)=0$, \ $H_{n_1,n_2}^{\bI}(d_1,d_2)$ is simply connected, the spin structure is unique.
 \end{remark}

\section{$\hat{A}$-genus and $\alpha$-invariant of twisted Milnor hypersurfaces }

In this section, we compute the  $\widehat{A}$-genus and $\alpha$-invariant of $H_{n_1,n_2}^{\bI}(d_1,d_2)$. The computation involves two combinatoric numbers $A(n,l)$ and $F_{n_1,n_2,\bI}(d_1,d_2)$, which we will deal with in Subsections \ref{A} and \ref{F} first.

\subsection{The number $A(n,l)$}\label{A}{\ }\\
\indent

Define
\begin{equation*}
A(n,l)=
\begin{cases}
\frac{1}{n!}\sum\limits_{m=0}^{l}(-1)^{l-m}\binom{l }{m}m^{n }, &  0\leq l\leq n;\\
0,&  \text{otherwise.}
\end{cases}
\end{equation*}

Denote by $$T(x)=\frac{x}{1-e^{-x}}=\sum_{m=0}^{\infty}\frac{B_{m}(1)}{m!}x^{m},$$
 the Todd series (c.f. \cite{CG}). Let $T^{(n)}(x)$ be the $n$-th derivative of $T(x)$.
\begin{lemma}\label{deriviate of g} The following identity holds,
$$\sum_{m=0}^{n}\frac{(-v)^{m}}{m!}T^{(m)}(v)=(T(v)-vT'(v))\left\{\sum_{l=1}^n A(n,l)(-v)^{n-l}T(v)^{l-1}\right\}.$$
\end{lemma}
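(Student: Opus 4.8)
The plan is to reduce to the single-variable function $f(v)=\frac{1}{1-e^{-v}}$, which satisfies $T(v)=vf(v)$ and $f'=f-f^{2}$, to compute $f^{(n)}$ in closed form, and then to translate the answer back into $T$ and $v$.

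Write $S_{n}:=\sum_{m=0}^{n}\frac{(-v)^{m}}{m!}T^{(m)}(v)$. The first step is the telescoping identity
$$S_{n}=\frac{(-1)^{n}v^{n+1}}{n!}f^{(n)}(v).$$
Indeed, Leibniz' rule gives $T^{(m)}=(vf)^{(m)}=vf^{(m)}+mf^{(m-1)}$ for $m\ge 1$ (and $T^{(0)}=vf$); substituting this into $S_{n}$, pulling $v$ out of the first sum and rewriting the second as $-v\sum_{m=1}^{n}\frac{(-v)^{m-1}}{(m-1)!}f^{(m-1)}$, one gets $S_{n}=v\bigl(\sum_{m=0}^{n}\frac{(-v)^{m}}{m!}f^{(m)}-\sum_{j=0}^{n-1}\frac{(-v)^{j}}{j!}f^{(j)}\bigr)=v\cdot\frac{(-v)^{n}}{n!}f^{(n)}$.

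For the second step, the geometric series $f(v)=\sum_{k\ge 0}e^{-kv}$ gives $f^{(n)}(v)=(-1)^{n}\sum_{k\ge 1}k^{n}e^{-kv}$ for $n\ge 1$. Writing $k^{n}=\sum_{l}S(n,l)\,l!\,\binom{k}{l}$, where $S(n,l)$ are the Stirling numbers of the second kind, and summing $\sum_{k\ge 0}\binom{k}{l}e^{-kv}=\frac{e^{-lv}}{(1-e^{-v})^{l+1}}$, one obtains $f^{(n)}(v)=(-1)^{n}\sum_{l=1}^{n}S(n,l)\,l!\,\frac{e^{-lv}}{(1-e^{-v})^{l+1}}$. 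Now $S(n,l)\,l!=n!\,A(n,l)$ by the definition of $A(n,l)$, and $\frac{e^{-lv}}{(1-e^{-v})^{l+1}}=(f-1)^{l}f$ since $\frac{e^{-v}}{1-e^{-v}}=f-1$ and $\frac{1}{1-e^{-v}}=f$; hence $f^{(n)}=(-1)^{n}n!\,f\sum_{l=1}^{n}A(n,l)(f-1)^{l}$. Feeding this into the first step and using $vf=T$, $v(f-1)=T-v$ yields $S_{n}=T\sum_{l=1}^{n}A(n,l)v^{n-l}(T-v)^{l}$. Finally $T'=f+vf'=f+v(f-f^{2})$ gives $T-vT'=T^{2}-vT=T(T-v)$, so
$$S_{n}=(T-vT')\sum_{l=1}^{n}A(n,l)v^{n-l}(T-v)^{l-1}.$$

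It therefore remains to prove the purely combinatorial identity $\sum_{l=1}^{n}A(n,l)v^{n-l}(T-v)^{l-1}=\sum_{l=1}^{n}A(n,l)(-v)^{n-l}T^{l-1}$ in $\Q[T,v]$; this is the step I expect to be the main obstacle, since it is not transparent term by term. I would prove it using the generating function $\sum_{n\ge l}A(n,l)t^{n}=(e^{t}-1)^{l}$ (which is $l!$ times the Stirling identity $\sum_{n}S(n,l)\frac{t^{n}}{n!}=\frac{(e^{t}-1)^{l}}{l!}$) together with $v^{n}[t^{n}]g(t)=[t^{n}]g(vt)$, where $[t^{n}]$ denotes the coefficient of $t^{n}$. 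These give $A(n,l)v^{n-l}(T-v)^{l-1}=\frac{1}{T-v}[t^{n}]\bigl(\frac{(T-v)(e^{vt}-1)}{v}\bigr)^{l}$ and $A(n,l)(-v)^{n-l}T^{l-1}=\frac{1}{T}[t^{n}]\bigl(\frac{T(1-e^{-vt})}{v}\bigr)^{l}$. Since the $l$-th summand has no term of $t$-degree below $l$, summing the geometric series in $l$ is legitimate at the level of $[t^{n}]$, and the two sides become $[t^{n}]\frac{e^{vt}-1}{v-(T-v)(e^{vt}-1)}$ and $[t^{n}]\frac{1-e^{-vt}}{v-T(1-e^{-vt})}$ respectively. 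These two rational functions of $t$ coincide: cross-multiplying and using $(e^{vt}-1)(1-e^{-vt})=e^{vt}+e^{-vt}-2$, their difference equals $v\bigl[(e^{vt}-1)-(1-e^{-vt})-(e^{vt}+e^{-vt}-2)\bigr]=0$. Comparing coefficients of $t^{n}$ gives the desired identity, and with it the lemma.
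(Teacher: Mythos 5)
Your proof is correct and takes a genuinely different route from the paper's. The paper argues by induction on $n$: it multiplies both sides of the identity by $v$, differentiates, and after a lengthy rearrangement invokes the recursion $A(n+1,l)=\tfrac{l}{n+1}\bigl(A(n,l)+A(n,l-1)\bigr)$ to pass from $n$ to $n+1$. You instead factor $T=vf$ with $f=(1-e^{-v})^{-1}$, collapse the alternating sum $S_n$ to $\tfrac{(-1)^n v^{n+1}}{n!}f^{(n)}$ by Leibniz (correct), and then use the classical Stirling-number closed form
$f^{(n)}=(-1)^n n!\,f\sum_{l\ge 1}A(n,l)(f-1)^l$.
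Translating back via $vf=T$, $v(f-1)=T-v$, and $T-vT'=T(T-v)$ reduces the lemma to the polynomial identity
$\sum_{l=1}^{n}A(n,l)v^{n-l}(T-v)^{l-1}=\sum_{l=1}^{n}A(n,l)(-v)^{n-l}T^{l-1}$,
and your generating-function argument for that identity is also correct: the two generating functions in $t$ you produce are equal, as your cross-multiplication verifies. Conceptually, your approach makes visible that $A(n,l)$ are precisely the coefficients when $f^{(n)}$ is expanded in powers of $f-1$, which the paper's induction hides; what you pay for this is the (pretty but somewhat unexpected) reorganization identity at the end. One point worth stating explicitly: the expansion $f(v)=\sum_{k\ge 0}e^{-kv}$ is not a formal power/Laurent series identity in $v$ (each term has constant term $1$); it is valid for $\mathrm{Re}(v)>0$, and the resulting formula for $f^{(n)}$ should then be transferred to an identity of Laurent series at $v=0$ by uniqueness of analytic continuation, or equivalently by carrying out that step as a formal computation in the variable $x=e^{-v}$ with $\tfrac{d}{dv}=-x\tfrac{d}{dx}$. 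A slightly more elementary version of your last step, avoiding the geometric series in $l$, is to expand $(T-v)^{l-1}$ by the binomial theorem and compare coefficients of $T^j$, which reduces the reorganization identity to $\sum_{l\ge j+1}A(n,l)\binom{l-1}{j}(-1)^l=(-1)^n A(n,j+1)$; this follows from $\sum_{n}A(n,l)t^n=(e^t-1)^l$ by a two-line computation.
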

\begin{proof}
When $n=1$, $$T(v)-vT'(v)=(T(v)-vT'(v))\{A(1,1)(-v)^0\}$$
holds since $A(1,1)=1$.
\vskip.2cm
When $n=2$, since $T(v)=\frac{ve^v}{e^v-1}$, we have

$$T(v)-vT'(v)+\frac{v^2}{2}T''(v)=(T(v)-vT'(v))\{\frac{-v}{2}+T(v)\}$$
Since $A(2,1)=\frac{1}{2}$ and $ A(2,2)=1$, the lemma holds.
\vskip.2cm
Assume the lemma holds for any integer $\leq n$. Multiply $v$ to both sides of
$$\sum_{m=0}^{n}\frac{(-v)^{m}}{m!}T^{(m)}(v)=(T(v)-vT'(v))\left\{\sum_{l=1}^n A(n,l)(-v)^{n-l}T(v)^{l-1}\right\},$$
we have
$$-\sum\limits_{m=0}^{n}\frac{(-v)^{m+1}}{m!}T^{(m)}(v)=v(T(v)-vT'(v))\left\{\sum\limits_{l=1}^n A(n,l)(-v)^{n-l}T(v)^{l-1}\right\}.$$

Differentiating the left hand side gives
\begin{align*}
& \frac{\mathrm{d}}{\mathrm{d}v}\{vT(v)-v^2T'(v)+\cdots+(-1)^{n-1}\frac{v^{n}}{(n-1)!}T^{(n-1)}(v)+(-1)^n\frac{v^{n+1}}{n!}T^{(n)}(v)\}\\
=&(T(v)+vT'(v))-(2vT'(v)+v^2T''(v))+\cdots+(-1)^{n-1}\{\frac{nv^{n-1}}{(n-1)!}T^{(n-1)}(v)\\
&+\frac{v^{n}}{(n-1)!}T^{(n)}(v)\}+\displaystyle(-1)^n\left\{\frac{(n+1)v^n}{n!}T^{(n)}(v)+\frac{v^{n+1}}{n!}T^{(n+1)}(v)\right\}\\
=&T(v)-(-1+2)vT'(v)+\cdots+(-1)^n(-1+\frac{n+1}{n})\frac{v^n}{(n-1)!}T^{(n)}(v)+(-1)^{n}\frac{v^{n+1}}{n!}T^{(n+1)}(v)\\
=&T(v)-vT'(v)+\cdots+(-1)^n\frac{v^n}{n!}T^{(n)}(v)+(-1)^{n}\frac{v^{n+1}}{n!}T^{(n+1)}(v)\\
=&(T(v)-vT'(v))\left\{\sum\limits_{l=1}^n A(n,l)(-v)^{n-l}T(v)^{l-1}\right\}-\frac{(-v)^{n+1}}{n!}T^{(n+1)}(v),
\end{align*}
which equals to the differentiation of the right hand side:
\begin{small}
\begin{align*}
&(T(v)-vT'(v))\left\{\sum\limits_{l=1}^n A(n,l)(-v)^{n-l}T(v)^{l-1}\right\}+v(-vT''(v))\left\{\sum\limits_{l=1}^n A(n,l)(-v)^{n-l}T(v)^{l-1}\right\}\\
&+v(T(v)-vT'(v))\frac{\mathrm{d}}{\mathrm{d}v}\left\{\sum\limits_{l=1}^n A(n,l)(-v)^{n-l}T(v)^{l-1}\right\}.
\end{align*}\end{small}

So we get
\begin{small}
\begin{align*}
&\frac{(-v)^{n+1}}{n!}T^{(n+1)}(v)\\
=&v(vT''(v))\left\{\sum\limits_{l=1}^n A(n,l)(-v)^{n-l}T(v)^{l-1}\right\}-v(T(v)-vT'(v))\frac{\mathrm{d}}{\mathrm{d}v}\left\{\sum\limits_{l=1}^n A(n,l)(-v)^{n-l}T(v)^{l-1}\right\}.
\end{align*}\end{small}

And since $v^2T''(v)=(T(v)-vT'(v))(2T(v)- v-2)$, we have
\begin{small}
\begin{align*}
&\frac{(-v)^{n+1}}{(n+1)!}T^{(n+1)}(v)\\
=&\frac{v^2T''(v)}{n+1}\left\{\sum\limits_{l=1}^n A(n,l)(-v)^{n-l}T(v)^{l-1}\right\} -\frac{v(T(v)-vT'(v))}{n+1}\cdot\frac{\mathrm{d}}{\mathrm{d}v}\left\{\sum\limits_{l=1}^n A(n,l)(-v)^{n-l}T(v)^{l-1}\right\}\\
=& \frac{ (T(v)-vT'(v))(2T(v)- v-2)}{n+1}\left\{\sum\limits_{l=1}^n A(n,l)(-v)^{n-l}T(v)^{l-1}\right\}\\
&-\frac{v(T(v)-vT'(v))}{n+1}\cdot\frac{\mathrm{d}}{\mathrm{d}v}\left\{\sum\limits_{l=1}^n A(n,l)(-v)^{n-l}T(v)^{l-1}\right\}\\
=&\frac{T(v)-vT'(v)}{n+1}\left\{(2T(v)-v-2)\sum\limits_{l=1}^n A(n,l)(-v)^{n-l}T(v)^{l-1}-v\frac{\mathrm{d}}{\mathrm{d}v}[ \sum\limits_{l=1}^n A(n,l)(-v)^{n-l}T(v)^{l-1}]\right\}.
\end{align*}\end{small}
Thus
\begin{small}
\begin{align*}
&T(v)-vT'(v)+\cdots+\frac{(-v)^n}{n!}T^{(n)}(v)+\frac{(-v)^{n+1}}{(n+1)!}T^{(n+1)}(v)\\
=&(T(v)-vT'(v))\{\sum\limits_{l=1}^n A(n,l)(-v)^{n-l}T(v)^{l-1}\}+\\
&\frac{T(v)-vT'(v)}{n+1}\cdot\left\{(2T(v)-v-2)\sum\limits_{l=1}^n A(n,l)(-v)^{n-l}T(v)^{l-1}-v\frac{\mathrm{d}}{\mathrm{d}v}[ \sum\limits_{l=1}^n A(n,l)(-v)^{n-l}T(v)^{l-1}]\right\}\\
=&\frac{T(v)-vT'(v)}{n+1}\cdot\{(2T(v)-v+n-1)\sum\limits_{l=1}^n A(n,l)(-v)^{n-l}T(v)^{l-1}\\
&-v\cdot\frac{\mathrm{d}}{\mathrm{d}v}[ \sum\limits_{l=1}^n A(n,l)(-v)^{n-l}T(v)^{l-1}]\}\\
=&\frac{T(v)-vT'(v)}{n+1}\cdot\{(2T(v)-v+(n-1))\sum\limits_{l=1}^n A(l,n)(-v)^{n-l}T(v)^{l-1}\\
&+ v\sum\limits_{l=1}^n  A(n,l)\cdot(n-l) (-v)^{n-l-1}T(v)^{l-1}-v\sum\limits_{l=1}^n A(n,l)(-v)^{n-l}(l-1)T(v)^{l-2}T'(v)\}\\
=&\frac{T(v)-vT'(v)}{n+1}\{(2T(v)-v+(n-1))\sum\limits_{l=1}^n A(n,l)(-v)^{n-l}T(v)^{l-1}\\
&+\sum\limits_{l=1}^n(l-n) A(n,l)\displaystyle\cdot(-v)^{n-l}T(v)^{l-1}-v\sum\limits_{l=1}^n A(n,l)(-v)^{n-l}(l-1)T(v)^{l-2}T'(v)\}\\
=&\frac{T(v)-vT'(v)}{n+1}  \cdot\{\sum\limits_{l=1}^n (2T(v)-v+l-1)A(n,l)(-v)^{n-l}T(v)^{l-1}\\
&-vT'(v)\sum\limits_{l=1}^n  A(n,l)\cdot(-v)^{n-l}(l-1)T(v)^{l-2}\}.
\end{align*}
\end{small}
For the first part in the bracket, we have
\begin{small}
\begin{align*}
&\sum\limits_{l=1}^n(2T(v)-v+l-1) A(n,l)(-v)^{n-l}T(v)^{l-1}\\
=&2\sum\limits_{l=1}^n A(n,l)(-v)^{n-l}T(v)^{l}+\sum\limits_{l=1}^n A(n,l)(-v)^{n-l+1}T(v)^{l-1}+\sum\limits_{l=1}^n (l-1)A(n,l)(-v)^{n-l}T(v)^{l-1}\\
=&\sum\limits_{l=2}^{n+1}2A(n,l-1 )(-v)^{n-l+1}T(v)^{l-1}+\sum\limits_{l=1}^n A(n,l)(-v)^{n-l+1}T(v)^{l-1} +\sum\limits_{l=1}^n (l-1)A(n,l)(-v)^{n-l}T(v)^{l-1}\\
=&\sum\limits_{l=1}^{n+1} (A(n,l)+2A(n,l-1 ))(-v)^{n-l+1}T(v)^{l-1}+\sum\limits_{l=1}^n (l-1)A(n,l)(-v)^{n-l}T(v)^{l-1}.
\end{align*}\end{small}

Since $vT'(v)=T(v)(v-T(v)+1)$, for the second part in the bracket, we have
\begin{small}
\begin{align*}
&-vT'(v)\sum\limits_{l=1}^n A(n,l)(-v)^{n-l}(l-1)T(v)^{l-2}\\
=&(-v+T(v)-1)\sum\limits_{l=1}^n (l-1)A(n,l)(-v)^{n-l}T(v)^{l-1}\\
=&\sum\limits_{l=1}^n (l-1)A(n,l)(-v)^{n-l+1}T(v)^{l-1}+\sum\limits_{l=1}^n (l-1)A(n,l)(-v)^{n-l}T(v)^{l}\\
&+\sum\limits_{l=1}^n(-1)(l-1)A(n,l)(-v)^{n-l}T(v)^{l-1}\\
=&\sum\limits_{l=1}^n (l-1)A(n,l)(-v)^{n-l+1}T(v)^{l-1}+\sum\limits_{l=2}^{n+1} (l-2)A(n,l-1 )(-v)^{n-l+1}T(v)^{l-1}\\
&+\sum\limits_{l=1}^n(-1)(l-1)A(n,l)(-v)^{n-l}T(v)^{l-1}\\
=&\sum\limits_{l=1}^{n+1} \{(l-1)A(n,l)+(l-2)A(n,l-1 )\}(-v)^{n-l+1}T(v)^{l-1} -\sum\limits_{l=1}^n(l-1)A(n,l)(-v)^{n-l}T(v)^{l-1}
\end{align*}\end{small}

Combining them together, we have
\begin{align*}
&T(v)-vT'(v)+\cdots+\frac{(-v)^n}{n!}T^{(n)}(v)+\frac{(-v)^{n+1}}{(n+1)!}T^{(n+1)}(v)\\
=&(T(v)-vT'(v))\sum\limits_{l=1}^{n+1}  \frac{l}{n+1}(A(n,l)+A(n,l-1 )) (-v)^{n-l+1}T(v)^{l-1}\\
=&\sum\limits_{l=1}^{n+1}A( n+1,l)(-v)^{n+1-l}T(v)^{l-1}.
\end{align*}
\end{proof}

\subsection{The number $F_{n_1,n_2,\bI}(d_1,d_2)$}\label{F}{\ }\\
\indent

Let  \be j(x)=\frac{\sinh(x/2)}{x/2}\ee be the
$j$-function (c.f. \cite[p. 167]{BGV}). Denote $Q(x):=j^{-1}(x).$ It is not hard to see that
\be Q(x)=e^{-\frac{x}{2}}\frac{x}{1-e^{-x}}=e^{-\frac{x}{2}}T(x).\ee

Denote
\be
\begin{split}
&F_{n_1,n_2, \bI}(d_1,d_2)\\
=&\sum_{\substack{0\leq r\leq n_2\\ \forall 1\leq j\leq r\\  l_j\geq 1,\sum l_j\leq n_1,0\leq m_j\leq l_j\\ 1\leq s_1<s_2<\cdots\ s_r \leq n_2}}
(-1)^{\sum_{j=1}^rm_j}\binom{\vec l}{ \vec m}\binom{\frac{d_1+n_1-1+\sigma_1}{2}-\vec s\cdot \vec m}{n_1}\binom{\frac{d_2+n_2-1 }{2}+\sum_{j=1}^rl_j-r}{n_2+\sum_{j=1}^rl_j},
\end{split}
\ee
where we denote $\vec l=(l_1, \cdots, l_r), \vec m=(m_1, \cdots, m_r), \vec s=(i_{s_1},\cdots, i_{s_r})$ and
\be \binom{\vec l}{\vec m}\coloneqq\binom{l_1}{m_1}\cdots\binom{l_r}{m_r}, \  \vec s\cdot \vec m\coloneqq\sum_{j=1}^rm_ji_{s_j}. \ee


\begin{proposition}\label{mainprop} Let $V, u, v, d_1, d_2$ be as in Section 2. One has
$$F_{n_1, n_2, \bI}(d_1,d_2)=\langle Q(u)^{n_1+1}Q(v)\prod_{j=1}^{n_2}Q(v-i_ju)e^{\frac{d_1u+d_2v }{2}},[V]\rangle.$$
\end{proposition}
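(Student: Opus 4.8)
The plan is to expand the right-hand side $\langle Q(u)^{n_1+1}Q(v)\prod_{j=1}^{n_2}Q(v-i_ju)e^{(d_1u+d_2v)/2},[V]\rangle$ into a sum of monomials in $u$ and $v$, evaluate each on the fundamental class via the pairing rules $\langle u^{n_1}v^{n_2},[V]\rangle=1$ together with the relations $u^{n_1+1}=0$ and $v(v-i_1u)\cdots(v-i_{n_2}u)=0$, and then match the resulting combinatorial sum to the definition of $F_{n_1,n_2,\bI}(d_1,d_2)$. First I would use $Q(x)=e^{-x/2}T(x)$ to collect the exponential factors: the product of all the $e^{-(\cdot)/2}$ contributes $e^{-((n_1+1)u + v + \sum_j(v-i_ju))/2}=e^{-((n_1+1-\sigma_1)u+(n_2+1)v)/2}$, which combines with $e^{(d_1u+d_2v)/2}$ to give $e^{((d_1-(n_1+1-\sigma_1))u+(d_2-(n_2+1))v)/2}$; writing $d_1=2k_1+n_1+1-\sigma_1$ and $d_2=2k_2+n_2+1$ this becomes $e^{k_1 u + k_2 v}$ (though one should keep the general $d_i$ form so the half-integer binomials in (\ref{Fi}) appear). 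So the expression reduces to $\langle T(u)^{n_1+1}T(v)\prod_{j=1}^{n_2}T(v-i_ju)\, e^{(\widetilde d_1 u+\widetilde d_2 v)/2},[V]\rangle$ with $\widetilde d_i$ the shifted degrees.

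Next I would handle the factor $T(v)\prod_{j=1}^{n_2}T(v-i_ju)$. Since the cohomology relation is $v(v-i_1u)\cdots(v-i_{n_2}u)=0$, the natural move is to recognize that $T(x)=x/(1-e^{-x})$ has a pole-like behavior that makes $\prod$ of $T$'s over the Chern roots of the bundle $\overline\gamma\otimes p^*\xi$ into (essentially) the Todd class, which is what governs pushforward along the projective bundle $\pi\colon V\to\CC P^{n_1}$. Concretely, I expect to use the residue/pushforward formula for projective bundles: integrating over the $\CC P^{n_2}$-fiber, $\pi_*$ of a power series in $v$ is extracted as the coefficient of $v^{n_2}$ after dividing by $\prod_j(\text{Chern root factors})$, and the factor $T(v)\prod T(v-i_ju)$ is precisely designed so that this extraction produces a clean binomial coefficient. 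Alternatively — and this is probably the cleaner route given what the paper has set up — I would expand each $T$ factor as a formal power series, use $e^{(\widetilde d_2 v)/2}T(v)^{?}$ type identities, and reduce everything to evaluating $\langle u^a v^b,[V]\rangle=\beta_{b-n_2}$ via Lemma~\ref{beta}, i.e. $\langle u^{n_1-k}v^{n_2+k},[V]\rangle=\sum_{p_1+\cdots+p_{n_2}=k} i_1^{p_1}\cdots i_{n_2}^{p_{n_2}}$.

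The key computational identity I would invoke is Lemma~\ref{deriviate of g}, which rewrites $\sum_{m=0}^n \frac{(-v)^m}{m!}T^{(m)}(v)$ in terms of the numbers $A(n,l)$ and powers of $T(v)$; the point is that when we expand $\prod_{j=1}^{n_2}T(v-i_ju)$ around $v$ (treating $i_ju$ as the increment, legitimate because $u^{n_1+1}=0$ truncates the Taylor series), each factor $T(v-i_ju)=\sum_{m\ge 0}\frac{(-i_ju)^m}{m!}T^{(m)}(v)$, and the product over $j$ organizes into a sum over subsets $\{s_1<\cdots<s_r\}$ of indices where the derivative order is nonzero, with the increments $i_{s_j}u$ raised to powers — this is exactly the combinatorial structure ($\vec l$, $\vec m$, $\vec s$) visible in (\ref{Fi}). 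After applying Lemma~\ref{deriviate of g} to collapse the derivative-sum into $A(n,l)$ terms times powers of $T(v)$, the remaining $T(v)$-powers combined with $T(v)$ from the original $T(v)$ factor and the exponential $e^{\widetilde d_2 v/2}$ get evaluated using a standard generating-function identity for $\binom{N}{n}$ (coefficient extraction from $(1-e^{-v})^{-n}e^{cv}$ type expressions), producing the binomial $\binom{\frac{d_2+n_2-1}{2}+\sum l_j - r}{n_2+\sum l_j}$; the $u$-part similarly yields $\binom{\frac{d_1+n_1-1+\sigma_1}{2}-\vec s\cdot\vec m}{n_1}$ after pairing against $T(u)^{n_1+1}e^{\widetilde d_1 u/2}$ and using $u^{n_1+1}=0$.

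The main obstacle I anticipate is the bookkeeping in step three: correctly tracking how the multi-index data $(r,\vec s,\vec l,\vec m)$ emerges when expanding $\prod_j T(v-i_ju)$ and then applying Lemma~\ref{deriviate of g} — in particular, getting the signs $(-1)^{\sum m_j}$, the multinomial factors $\binom{\vec l}{\vec m}$, and the shifts $\sum l_j - r$ versus $n_2+\sum l_j$ all lined up exactly with (\ref{Fi}), and verifying that the $A(n,l)$ coefficients produced by the lemma recombine (via the identity $\langle u^{n_1-k}v^{n_2+k},[V]\rangle=\beta_k$ and the definition of $A(n,l)$ as an alternating binomial sum) into the plain product of two binomial coefficients with no leftover $A(n,l)$ dependence. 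The analytic inputs ($Q=e^{-x/2}T$, the Taylor expansion, the generating-function identity for binomials) are routine; the real work is this index-matching, which I would carry out by first doing the untwisted case $\bI=\mathbf 0$ (where the product collapses and only the $r=0$ term survives) as a sanity check, then reintroducing the twisting one factor at a time.
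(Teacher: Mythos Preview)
Your proposal is correct and follows essentially the same route as the paper: collapse $Q$ to $T$ via the exponential shift to get $e^{k_1u+k_2v}$, Taylor-expand each $T(v-i_ju)$ about $v$ (truncated by $u^{n_1+1}=0$), pair against $[V]$ using $\langle u^{n_1-k}v^{n_2+k},[V]\rangle=\beta_k$, apply Lemma~\ref{deriviate of g} to the truncated Taylor sums, and evaluate the remaining $T$-power contour integrals as binomial coefficients via Lemma~\ref{integral}. The step you flag as the main obstacle---making the $A(n,l)$'s disappear---is exactly the paper's Lemma~\ref{5}, which uses the generating-function identity $\sum_{m\ge l}A(m,l)u^m=(e^u-1)^l$ (Lemma~\ref{3}) inside the $u$-integral and then expands $(e^{-i_{s_j}u}-1)^{l_j}$ by the binomial theorem to produce precisely the factors $(-1)^{\sum m_j}\binom{\vec l}{\vec m}$ and the shift $-\vec s\cdot\vec m$ in the first binomial.
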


The rest of this subsection is devoted to the proof of this proposition. We will proceed by two steps.

In the expansion of $\widehat{A}(M)\cdot e^{\frac{d_1u+d_2v }{2}}$ with respect to $u$, observe that only $n_{1}+1$ terms
$$\{u^{n_1}v^{n_{2}},u^{n_1-1}v^{n_{2}+1},\cdots, v^{n_{2}+2}\}$$
survive, as $u^{m}=0,\forall\ m>n_1$.

\vskip.2cm
Let  $\gamma_1,\gamma_2$ be small circles around $u=0$ resp. $ v=0$, we have
\be
\begin{split}
&\langle Q(u)^{n_1+1}Q(v)\prod_{j=1}^{n_2}Q(v-i_ju) e^{\frac{d_1u+d_2v }{2}},[V]\rangle\\
=&\sum_{k=0}^{n_{1}} \beta_{ k}\cdot(\frac{1}{2\pi i})^2\oint_{\gamma_1}\oint_{\gamma_2}\frac{
Q(u)^{n_1+1}Q(v)\prod_{j=1}^{n_2}Q(v-i_ju) e^{\frac{d_1u+d_2v }{2}}
}{u^{n_1-k+1}v^{n_{2}+k+1}}dudv.
\end{split}
\ee

\vskip.3cm
Recall that $d_1u+d_2v=(2k_{1}+n_1+1-\sigma_{1})u+(2k_{2}+n_{2}+1)v$ and $Q(x)=e^{-\frac{x}{2}}T(x).$ So

\begin{align*}
&\langle Q(u)^{n_1+1}\prod_{j=1}^{n_2}Q(v-i_ju)Q(v)e^{\frac{d_1u+d_2v }{2}},[V]\rangle\\
=&\langle Q(u)^{n_1+1}Q(v)\prod\limits_{j=1}^{n_2}Q(v-i_ju)e^{\frac{d_1u+d_2v }{2}},[V]\rangle\\
=&\langle
T(u)^{n_{1}+1}T(v)\prod\limits_{j=1}^{n_2}T(v-i_ju)e^{\frac{d_1u+d_2v }{2}-\frac{v}{2}-\frac{(n_{1}+1)u}{2}-\frac{\sum_{j=1}^{n_2}(v-i_ju)}{2}}
, [V] \rangle\\
=& \langle
T(u)^{n_{1}+1}T(v)\prod\limits_{j=1}^{n_2}T(v-i_ju)e^{k_{1}u+k_{2}v}
, [V] \rangle \\
=& \langle
T(u)^{n_{1}+1}e^{k_{1}u}T(v)\prod\limits_{j=1}^{n_2}\{T(v)-i_juT'(v)+\cdots+\frac{(-i_ju)^{n_{1}}}{n_{1}!}T^{(n_{1})}(v)\}e^{k_{2}v}
, [V] \rangle,
\end{align*}
where the last equation used the Taylor expansion :
$$T(v-iu)\equiv T(v)+\frac{-i}{1!}T'(v)u+\cdots+\frac{(-i)^{n_1}}{n_1!}T^{(n_1)}(v)u^{n_1} \ \mod \ u^{n_1+1} .$$

\subsubsection{The first step} In this step, to make the notations simpler, denote
$$b_{m}:=\frac{1}{2\pi i}\oint_{\gamma}\frac{T(u)^{n_1+1}e^{k_1u}}{u^{m+1}}du$$
and thus
$$T(u)^{n_{1}+1}e^{k_{1}u}=1+b_1u+\cdots+b_m u^m+\cdots,$$
where $\gamma$ is a small circle around $u=0$, \ $0\leq m \leq n_1$.

\begin{lemma}\label{integral}
$$b_{n_1}=\frac{1}{2\pi i}\oint_{\gamma}\frac{T(u)^{n_1+1}e^{ku}}{u^{n_1+1}}du=\binom {n_1+k}{n_1},$$
where $\binom {n_1+k}{n_1}$ is the generalised binomial coefficient defined as
$$\binom {n_1+k} {n_1}=\frac{(n_1+k)\cdots(k+1)}{n_1!}, \ \ \forall\ n_1\in\N, \ k\in\Z.$$
\end{lemma}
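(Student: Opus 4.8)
\emph{Proof proposal.} The plan is to evaluate the residue at $u=0$ directly, by a single change of variables that turns the Todd-type integrand into a rational function whose relevant Taylor coefficient is a generalized binomial coefficient.

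First I would use the defining identity $T(u)=\dfrac{u}{1-e^{-u}}$ to simplify the integrand:
$$\frac{T(u)^{n_1+1}e^{ku}}{u^{n_1+1}}=\frac{u^{n_1+1}}{(1-e^{-u})^{n_1+1}}\cdot\frac{e^{ku}}{u^{n_1+1}}=\frac{e^{ku}}{(1-e^{-u})^{n_1+1}},$$
which is meromorphic near $u=0$ with a pole of order $n_1+1$, so that $b_{n_1}$ is exactly its residue at $u=0$. Next, substitute $w=1-e^{-u}$. Near $u=0$ one has $w=u+O(u^2)$, so this is a biholomorphism of a neighborhood of $0$ fixing $0$; it carries the small positively oriented circle $\gamma$ around $u=0$ to a small positively oriented circle $\gamma'$ around $w=0$. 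Under the substitution $e^{-u}=1-w$, hence $e^{ku}=(1-w)^{-k}$ and $du=\dfrac{dw}{1-w}$, so
$$b_{n_1}=\frac{1}{2\pi i}\oint_{\gamma'}\frac{(1-w)^{-k}}{w^{n_1+1}}\cdot\frac{dw}{1-w}=\frac{1}{2\pi i}\oint_{\gamma'}\frac{(1-w)^{-k-1}}{w^{n_1+1}}\,dw.$$

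This last integral extracts the coefficient of $w^{n_1}$ in the expansion $(1-w)^{-k-1}=\sum_{m\ge 0}\binom{m+k}{m}w^{m}$, which yields $b_{n_1}=\binom{n_1+k}{n_1}$. I would then record that the resulting identity holds for every integer $k$ with the stated convention $\binom{n_1+k}{n_1}=\dfrac{(n_1+k)\cdots(k+1)}{n_1!}$: both sides are polynomials in $k$ of degree $n_1$ agreeing for all $k\ge 0$, hence identically; equivalently, $[w^{n_1}](1-w)^{-k-1}=\binom{-k-1}{n_1}(-1)^{n_1}=\binom{n_1+k}{n_1}$ by the standard reindexing of the generalized binomial coefficient.

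There is essentially no hard step here. The only points that need a word of care are the legitimacy of the substitution $w=1-e^{-u}$ (that it is holomorphic, invertible, and orientation-preserving on a small loop about the origin, which follows from $\frac{dw}{du}\big|_{0}=1$), and the bookkeeping of the generalized binomial coefficient when $k$ is negative, where $(1-w)^{-k-1}$ degenerates to a polynomial; in the range $-n_1\le k\le -1$ this polynomial has degree $<n_1$, consistently giving $b_{n_1}=\binom{n_1+k}{n_1}=0$. (One may alternatively note that the left-hand side is $\langle \mathrm{Td}(T\CC P^{n_1})\,\mathrm{ch}(\overline{\eta}^{\otimes k}),[\CC P^{n_1}]\rangle=\chi(\CC P^{n_1},\mathcal O(k))=\binom{n_1+k}{n_1}$ by Hirzebruch--Riemann--Roch, but the direct computation above keeps the argument self-contained.)
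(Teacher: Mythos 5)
Your proof is correct and follows essentially the same route as the paper: after rewriting $T(u)^{n_1+1}e^{ku}/u^{n_1+1}$ as an exponential-rational function, you perform a rational change of variables that makes the residue a visible binomial coefficient. The only difference is cosmetic — you substitute $w=1-e^{-u}$ and read off a coefficient of $(1-w)^{-k-1}$, while the paper substitutes $t=e^{u}-1$ and reads off a coefficient of $(1+t)^{n_1+k}$; the two are related by the Möbius map $t=w/(1-w)$ and yield the identical generalized binomial coefficient for every $k\in\Z$.
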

\begin{proof}
\begin{align*}
&\frac{1}{2\pi i}\oint_{\gamma}\frac{T(u)^{n_1+1}e^{ku}}{u^{n_1+1}}du\\
=&\frac{1}{2\pi i}\oint_{\gamma}(\frac{u}{1-e^{-u}})^{n_1+1}e^{ku}\frac{1}{u^{n_1+1}}du\\
=&\frac{1}{2\pi i}\oint_{\gamma}\frac{e^{(n_1+k+1)u}}{(e^{u}-1)^{n_1+1}}du\\
=&\frac{1}{2\pi i}\oint_{\gamma}\frac{(1+t)^{n_1+k}}{t^{n_1+1}}dt\ (let\ e^{u}=t+1)\\
=&\binom {n_1+k}{n_1}.
\end{align*}
\end{proof}

Furthermore,
\begin{align*}
&\langle Q(u)^{n_1+1}\prod_{j=1}^{n_2}Q(v-i_ju)Q(v)e^{\frac{d_1u+d_2v }{2}},[V]\rangle\\
=& \langle
(\sum\limits_{m=0}^{n_{1}}b_{m}u^{m})T(v)\prod\limits_{k=1}^{n_2}\{\sum\limits_{p_k=0}^{n_1}\frac{(-i_ku)^{p_k}}{p_k!}T^{(p_k)}(v)\}e^{k_{2}v}
, [V] \rangle\\
=&\langle
(\sum\limits_{m=0}^{n_{1}}b_{m}u^{m})T(v)\{\sum\limits_{p=0}^{n_1}u^p\sum\limits_{\substack{p_1+\cdots+p_{n_2}=p\\ \forall 1\leq k\leq n_2, p_k\geq 0}}(-1)^p \cdot i_1^{p_1}\cdots i_{n_{2}}^{p_{n_2}}\frac{T^{(p_1)}(v)}{p_1!}\cdots\frac{T^{(p_{n_2})}(v)}{p_{n_2}!}\}e^{k_{2}v}
, [V] \rangle\\
=&\langle
\sum\limits_{m=0}^{n_{1}}u^m\{\sum\limits_{p=0}^{m}b_{m-p}\sum\limits_{\substack{p_1+\cdots+p_{n_2}=p\\ \forall 1\leq k\leq n_2, p_k\geq 0}}(-1)^p\cdot i_1^{p_1}\cdots i_{n_{2}}^{p_{n_2}}\frac{T^{(p_1)}(v)}{p_1!}\cdots\frac{T^{(p_{n_2})}(v)}{p_{n_2}!}\}T(v)e^{k_{2}v}
, [V] \rangle\\
=&\sum\limits_{m=0}^{n_{1}}\beta_{n_{1}-m}\cdot\{\sum\limits_{p=0}^{m}b_{m-p}\sum\limits_{\substack{p_1+\cdots+p_{n_2}=p\\ \forall 1\leq k\leq n_2, p_k\geq 0}}(-1)^p\cdot i_1^{p_1}\cdots i_{n_{2}}^{p_{n_2}}
\frac{1}{2\pi i}\oint_{\gamma}\frac{
\frac{T^{(p_1)}(v)}{p_1!}\cdots\frac{T^{(p_{n_2})}(v)}{p_{n_2}!}
}{v^{n_2+n_1-m+1}}T(v)e^{k_{2}v}dv\}\\
=&\sum\limits_{m=0}^{n_{1}}\sum\limits_{p=0}^{m}b_{m-p}\sum\limits_{\substack{p_1+\cdots+p_{n_2}=p \\ q_1+\cdots+q_{n_2}=n_1-m\\ \forall 1\leq k\leq n_2, p_k, q_k\geq 0}}i_1^{q_1}\cdots i_{n_{2}}^{q_{n_2}}\cdot i_1^{p_1}\cdots i_{n_{2}}^{p_{n_2}}\frac{(-1)^p}{2\pi i}\oint_{\gamma}\frac{
\{ \frac{T^{(p_1)}(v)}{p_1!}\cdots\frac{T^{(p_{n_2})}(v)}{p_{n_2}!}\}T(v)e^{k_{2}v}
}{v^{n_2+n_1+1-m}}dv\\
=&\sum\limits_{m=0}^{n_{1}}\sum\limits_{p=0}^{m}b_{m-p}\sum\limits_{\substack{p_1+\cdots+p_{n_2}=p\\ q_1+\cdots+q_{n_2}=n_1-m+p\\ \forall 1\leq k\leq n_2, q_k\geq p_k\geq 0}}i_1^{q_1}\cdots i_{n_{2}}^{q_{n_2}}
\frac{(-1)^p}{2\pi i}\oint_{\gamma}\frac{
\{ \frac{T^{(p_1)}(v)}{p_1!}\cdots\frac{T^{(p_{n_2})}(v)}{p_{n_2}!}\}T(v)e^{k_{2}v}
}{v^{n_2+n_1+1-m}}
dv
\end{align*}

\begin{align*}
=&\sum\limits_{t=0}^{n_{1}}b_{t}\sum\limits_{m=t}^{n_1}\sum\limits_{\substack{ q_1+\cdots+q_{n_2}=n_1-t\\ \forall 1\leq k\leq n_2,q_k\geq 0 }}
\sum\limits_{\substack{ p_1+\cdots+p_{n_2}=m-t\\  p_k\leq q_k, \forall 1\leq k\leq n_2}}i_1^{q_1}\cdots i_{n_{2}}^{q_{n_2}}\\
&\cdot
\frac{1}{2\pi i}\oint_{\gamma}\frac{
\{\frac{T^{(p_1)}(v)}{p_1!}\cdots\frac{T^{(p_{n_2})}(v)}{p_{n_2}!}\}(-v)^{m-t}T(v)e^{k_{2}v}
}{v^{n_2+n_1+1-t}}dv \ (\mathrm{Let}\ \ t=m-p)\\
=&\sum\limits_{t=0}^{n_{1}}\frac{ b_t}{2\pi i}\sum\limits_{\substack{\sum_{k=1}^{n_2}q_k=n_1-t\\ \forall 1\leq k\leq n_2,q_k\geq 0 }}
i_1^{q_1}\cdots i_{n_{2}}^{q_{n_2}}\sum\limits_{m=t}^{n_1}\sum\limits_{\substack{\sum p_k=m-t\\ p_k\leq q_k}}\oint_{\gamma}\frac{
\{
\frac{(-v)^{p_1}T^{(p_1)}(v)}{p_1!}\cdots\frac{(-v)^{p_{n_2}}T^{(p_{n_2})}(v)}{p_{n_2}!}\}T(v)e^{k_{2}v}
}{v^{n_2+n_1+1-t}}dv.
\end{align*}

Note that for fixed integers $q_1,\cdots, q_{n_2}$ with $\sum_{k=1}^{n_2}q_k=n_1-t$, we have
\begin{align*}
&\sum\limits_{m=t}^{n_1}\sum\limits_{\substack{\sum p_k=m-t\\  p_k\leq q_k}}\frac{(-v)^{p_1}T^{(p_1)}(v)}{p_1!}\cdots\frac{(-v)^{p_{n_2}}T^{(p_{n_2})}(v)}{p_{n_2}!}\\
=&\sum\limits_{N=0}^{n_1-t}\sum\limits_{\substack{\sum p_k=N\\  p_k\leq q_k}}\frac{(-v)^{p_1}T^{(p_1)}(v)}{p_1!}\cdots\frac{(-v)^{p_{n_2}}T^{(p_{n_2})}(v)}{p_{n_2}!}\\
=&\prod\limits_{\substack{1\leq k\leq n_2}}(T(v)-vT'(v)+\cdots+\frac{(-v)^{q_k}T^{(q_k)}(v)}{q_k!})\\
=&\prod\limits_{\substack{1\leq k\leq n_2}}\sum\limits_{m_k=0}^{q_k}\frac{(-v)^{m_k}}{m_k!}T^{(m_k)}(v).
\end{align*}

Thus
\begin{equation}\label{observe}
\begin{aligned}
&\langle Q(u)^{n_1+1}\prod_{j=1}^{n_2}Q(v-i_ju)Q(v)e^{\frac{d_1u+d_2v }{2}},[V]\rangle\\
=&\sum\limits_{t=0}^{n_{1}}b_t\sum\limits_{\substack{\sum q_k=n_1-t\\ \forall 1\leq k\leq n_2,q_k\geq 0}}i_1^{q_1}\cdots i_{n_{2}}^{q_{n_2}}\frac{1}{2\pi i}\oint_{\gamma}\frac{
\prod\limits_{k=1}^{n_{2}}(\sum\limits_{m_k=0}^{q_k}\frac{(-v)^{m_k}}{m_k!}T^{(m_k)}(v))
}{v^{n_2+1+(n_1-t)}}T(v)e^{k_{2}v}dv\\
=&\sum\limits_{t=0}^{n_{1} }b_t\sum\limits_{r=0}^{n_2}\sum\limits_{ 1\leq s_1<s_2<\cdots\ s_r \leq n_2}i_{s_1}^{p_1}\cdots i_{s_r}^{p_{r}}\frac{1}{2\pi i}\oint_{\gamma}\frac{
\prod\limits_{k=1}^{r}(\sum\limits_{m_k=0}^{p_k}\frac{(-v)^{m_k}}{m_k!}T^{(m_k)}(v))
}{v^{n_2+1+n_1-t}}T(v)^{n_2-r+1}e^{k_{2}v}dv,
\end{aligned}
\end{equation}
where $\{p_1,\cdots,p_r\}$ be a positive partition of $n_1-t$.

\begin{lemma}\label{10}
Let $\{p_1,\cdots,p_r\}$ be a positive partition of $n_1-t$. Then
$$
\frac{1}{2\pi i}\oint_{\gamma}\frac{\prod_{k=1}^{r}(\sum\limits_{m_k=0}^{p_k}\frac{(-v)^{m_k}}{m_k!}T^{(m_k)}(v))}
{v^{n_2+1+n_1-t}}T(v)^{n_2+1-r}e^{k_{2}v}dv$$$$
=(-1)^{n_1-t}\sum_{\substack{\forall 1\leq k\leq r, 1\leq l_k\leq p_k\\l=l_1+\cdots+l_r}}A(p_1,l_1)\cdots A(p_r,l_r)(-1)^{l}\binom{n_2+l+k_2-r}{n_2+l}.
$$
\end{lemma}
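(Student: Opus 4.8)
The plan is to collapse the contour integral into a sum of elementary residues, each evaluated exactly as in the proof of Lemma~\ref{integral}. First I would unfold the $r$ inner sums: applying Lemma~\ref{deriviate of g} with $n=p_k$ to each factor gives
$$\sum_{m_k=0}^{p_k}\frac{(-v)^{m_k}}{m_k!}T^{(m_k)}(v)=(T(v)-vT'(v))\sum_{l_k=1}^{p_k}A(p_k,l_k)(-v)^{p_k-l_k}T(v)^{l_k-1}$$
for $k=1,\dots,r$. Multiplying these over $k$, expanding the product, and writing $l:=l_1+\cdots+l_r$ (and recalling $p_1+\cdots+p_r=n_1-t$), one gets
$$\prod_{k=1}^{r}\Bigl(\sum_{m_k=0}^{p_k}\frac{(-v)^{m_k}}{m_k!}T^{(m_k)}(v)\Bigr)=(T(v)-vT'(v))^r\sum_{\substack{1\le l_k\le p_k\\ l=l_1+\cdots+l_r}}A(p_1,l_1)\cdots A(p_r,l_r)\,(-v)^{n_1-t-l}\,T(v)^{l-r},$$
so that the integrand decomposes, over the tuples $(l_1,\dots,l_r)$, into terms carrying the weight $A(p_1,l_1)\cdots A(p_r,l_r)$.

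The key simplification is the identity $T(v)-vT'(v)=e^{-v}T(v)^2$, immediate from $T(v)=v/(1-e^{-v})$ (equivalently $T(v)-vT'(v)=v^2e^{-v}/(1-e^{-v})^2$; note this also reads $Q(v)^2=T(v)-vT'(v)$). Hence $(T(v)-vT'(v))^r=e^{-rv}T(v)^{2r}$, so in the $(l_1,\dots,l_r)$-term the $r$ factors contribute the exponential $e^{-rv}$, the total exponent of $T(v)$ becomes $2r+(l-r)+(n_2+1-r)=n_2+1+l$, and the total exponent of $v$ becomes $(n_1-t-l)-(n_2+1+n_1-t)=-(n_2+1+l)$. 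Thus each summand of the integrand equals
$$A(p_1,l_1)\cdots A(p_r,l_r)\,(-1)^{\,n_1-t-l}\,\frac{T(v)^{\,n_2+1+l}\,e^{(k_2-r)v}}{v^{\,n_2+1+l}}.$$

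It then remains to evaluate residues and reassemble. The residue at $v=0$ of the last display is $A(p_1,l_1)\cdots A(p_r,l_r)(-1)^{n_1-t-l}$ times the coefficient of $v^{\,n_2+l}$ in $T(v)^{\,n_2+l+1}e^{(k_2-r)v}$; by the computation in the proof of Lemma~\ref{integral}, with $n_1$ replaced by $n_2+l$ and $k$ by $k_2-r$ (via the substitution $e^v=t+1$), this coefficient is $\binom{n_2+l+k_2-r}{n_2+l}$. Summing over all admissible $(l_1,\dots,l_r)$ and writing $(-1)^{n_1-t-l}=(-1)^{n_1-t}(-1)^{l}$ then produces exactly the right-hand side of the lemma.

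I do not anticipate a genuine obstacle: Lemmas~\ref{deriviate of g} and~\ref{integral} already carry all the analytic content, and what is left is bookkeeping of the exponents of $v$ and $T(v)$ through the expansion. The two points needing a little care are that certain intermediate powers of $T(v)$ (such as $T(v)^{\,n_2+1+l-2r}$) may have negative exponent, which is harmless because $T$ is a unit power series with $T(0)=1$, and that $\tfrac{1}{2\pi i}\oint_{\gamma}$ is to be understood throughout as extraction of the $v^{-1}$-coefficient.
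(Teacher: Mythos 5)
Your proposal is correct and follows essentially the same route as the paper: expand each inner sum via Lemma~\ref{deriviate of g}, collect $(T(v)-vT'(v))^r$, reduce to a single power of $T(v)$ over the matching power of $v$, and evaluate the residue via Lemma~\ref{integral}. The only cosmetic difference is that you invoke the compact identity $T(v)-vT'(v)=e^{-v}T(v)^2=Q(v)^2$ directly, whereas the paper reaches the same cancellation by substituting the explicit rational expressions $\frac{v^2e^v}{(e^v-1)^2}$ and $\left(\frac{ve^v}{e^v-1}\right)^{2r}$ and simplifying the quotient to $e^{-rv}$.
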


\begin{proof}
By Lemma \ref{deriviate of g},
$$
\sum_{m_k=0}^{p_k}\frac{(-v)^{m_k}}{m_k!}T^{(m_k)}(v)=(T(v)-vT'(v))(\sum_{l_k=1}^{p_k}A(p_k,l_k)(-v)^{p_k-l_k}T(v)^{l_k-1}).
$$
Thus\begin{small}
\begin{align*}
&\frac{1}{2\pi i}\oint_{\gamma}\frac{\prod\limits_{k=1}^{r}(\sum\limits_{m_k=0}^{p_k}\frac{(-v)^{m_k}}{m_k!}T^{(m_k)}(v))}
{v^{n_2+1+p}}T(v)^{n_2+1-r}e^{k_{2}v}dv\\
=&\frac{1}{2\pi i}\oint_{\gamma}\frac{\prod\limits_{k=1}^{r}(\sum\limits_{l_k=1}^{p_k}A(p_k,l_k)(-v)^{p_k-l_k}T(v)^{l_k-1})}
{v^{n_2+1+p}}(T(v)-vT'(v))^rT(v)^{n_2+1-r}e^{k_{2}v}dv\\
=&\sum\limits_{\substack{\forall 1\leq k\leq r, 1\leq l_k\leq p_k\\l=l_1+\cdots+l_r}}\frac{1}{2\pi i}\oint_{\gamma}\frac{A(p_1,l_1)\cdots A(p_r,l_r)(-v)^{p-l}T(v)^{l-r}}
{v^{n_2+1+p}}(T(v)-vT'(v))^r\\
&\cdot T(v)^{n_2+1-r}e^{k_{2}v}dv\\
=&\sum\limits_{\substack{\forall 1\leq k\leq r,1\leq l_k\leq p_k\\l=l_1+\cdots+l_r}}A(p_1,l_1)\cdots A(p_r,l_r)\frac{(-1)^{p-l}}{2\pi i}\oint_{\gamma}\frac{(T(v)-vT'(v))^rT(v)^{n_2+1+l }e^{k_{2}v}}
{T(v)^{2r}v^{n_2+1+l}}dv\\
=&\sum\limits_{\substack{\forall 1\leq k\leq r, 1\leq l_k\leq p_k\\l=l_1+\cdots+l_r}}A(p_1,l_1)\cdots A(p_r,l_r)\frac{(-1)^{p-l}}{2\pi i}\oint_{\gamma}\frac{(\frac{v^2e^v}{(e^v-1)^2})^rT(v)^{n_2+1+l }e^{k_{2}v}}
{(\frac{ve^v}{e^v-1})^{ 2r}v^{n_2+1+l}}dv\\
=&\sum\limits_{\substack{\forall 1\leq k\leq r, 1\leq l_k\leq p_k\\l=l_1+\cdots+l_r}}A(p_1,l_1)\cdots A(p_r,l_r)(-1)^{p-l}\binom{n_2+l+k_2-r}{n_2+l}. \ ( \mathrm{By\ Lemma}\ \ref{integral})
\end{align*}
\end{small}
\end{proof}

To sum up
\begin{align*}
&\langle Q(u)^{n_1+1}\prod_{j=1}^{n_2}Q(v-i_ju)Q(v)e^{\frac{d_1u+d_2v }{2}},[V]\rangle\\
=&\sum\limits_{t=0}^{n_{1} }b_t \sum\limits_{\substack{0\leq r\leq n_2\\p=p_1+\cdots+p_r=n_1-t\\ \forall 1\leq k\leq r, p_k\geq 1\\ 1\leq s_1<s_2<\cdots\ s_r \leq n_2}} i_{s_1}^{p_1}\cdots i_{s_r}^{p_{r}}(-1)^p
\sum\limits_{\substack{\forall 1\leq k\leq r\\ 1\leq l_k\leq p_k\\l=l_1+\cdots+l_r}}A(p_1,l_1)\cdots A(p_r,l_r)(-1)^{l}\binom{n_2+k_2+l-r}{n_2+l}\\
=&\sum\limits_{\substack{0\leq r\leq n_2\\p=p_1+\cdots+p_r\leq n_1\\  \forall 1\leq k\leq r,p_k\geq 1\\ 1\leq s_1<s_2<\cdots\ s_r \leq n_2}}b_{n_1-p}i_{s_1}^{p_1}\cdots i_{s_r}^{p_{r}}(-1)^p
 \sum\limits_{\substack{\forall 1\leq k\leq r\\ 1\leq l_k\leq p_k\\l=l_1+\cdots+l_r}}A(p_1,l_1)\cdots A(p_r,l_r)(-1)^{l}\binom{n_2+k_2+l-r}{n_2+l}\\
 =&\sum\limits_{\substack{0\leq r\leq n_2\\\forall 1\leq k\leq r,1\leq l_k\\l=l_1+\cdots+l_r\leq n_1\\1\leq s_1<s_2<\cdots\ s_r \leq n_2}}(-1)^{l}\binom{n_2+k_2+l-r}{n_2+l}
 \sum\limits_{\substack{p_1+\cdots+p_r\leq n_1\\  \forall 1\leq k\leq r,p_k\geq l_k}}  \!\!\! b_{n_1-p}(-i_{s_1})^{p_1}A(p_1,l_1)\cdots(-i_{s_r})^{p_{r}}A(p_r,l_r)
\end{align*}

\subsubsection{The second step}

\begin{lemma}\label{5}
For any fixed $\{l_1,\cdots,l_r,s_1,\cdots,s_r\}$, let $l=l_1+\cdots+l_r$, we have
\begin{align*}
&\sum_{\substack{p_1+\cdots+p_r\leq n_1\\  p_k\geq l_k,\forall 1\leq k\leq r}}  b_{n_1-p}(-i_{s_1})^{p_1}A(p_1,l_1)\cdots(-i_{s_r})^{p_{r}}A(p_r,l_r)\\
=&\sum_{\substack{1\leq j\leq r\\ 0\leq m_j\leq l_r\\ m=m_1+\cdots+m_r}}(-1)^{l-m}\binom{\vec l}{\vec{m}}\binom{n_1+k_1-\vec s\cdot \vec m}{n_1} ,
\end{align*}
where  $\vec l=(l_1, \cdots, l_r), \vec m=(m_1, \cdots, m_r), \vec s=(i_{s_1},\cdots, i_{s_r})$
and
$ \binom{\vec l}{\vec m}\coloneqq\binom{l_1}{m_1}\cdots\binom{l_r}{m_r},$ \\ $\vec s\cdot \vec m \coloneqq\sum\limits_{j=1}^rm_j\cdot i_{s_j}. $
\end{lemma}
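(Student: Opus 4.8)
The plan is to collapse the entire left-hand side into a single coefficient-extraction in the variable $u$, and then to evaluate it by combining the generating function of the numbers $A(n,l)$ with Lemma \ref{integral}. Write $[u^{N}]f(u)$ for the coefficient of $u^N$ in a power series $f$, and set $G(u):=T(u)^{n_1+1}e^{k_1u}=\sum_{m\ge 0}b_mu^m$, so that $b_{n_1-p}=[u^{n_1-p}]G(u)$, with $b_{n_1-p}=0$ once $p>n_1$.

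The first ingredient I would record is the elementary identity
$$\sum_{p\ge 0}A(p,l)\,x^p=(e^x-1)^l,$$
which follows immediately from the definition of $A(p,l)$: using $\sum_{p\ge 0}(mx)^p/p!=e^{mx}$ and the fact that the $l$-th finite difference of a monomial of degree $<l$ vanishes, one gets $\sum_p A(p,l)x^p=\sum_{m=0}^l(-1)^{l-m}\binom{l}{m}e^{mx}=(e^x-1)^l$; in particular $A(p,l)=0$ for $p<l$, so the sum is genuinely over all $p\ge 0$. Substituting $x=-i_{s_k}u$ gives $\sum_{p_k\ge 0}A(p_k,l_k)(-i_{s_k}u)^{p_k}=(e^{-i_{s_k}u}-1)^{l_k}$ for each $k$.

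Next I would multiply the $r$ series $\sum_{p_k\ge 0}A(p_k,l_k)(-i_{s_k}u)^{p_k}$ together with $G(u)$ and extract the coefficient of $u^{n_1}$. Because $b_{n_1-p}=0$ for $p>n_1$, the constraint $p_1+\cdots+p_r\le n_1$ is automatic, and because $A(p_k,l_k)=0$ for $p_k<l_k$, the constraint $p_k\ge l_k$ is automatic, so this coefficient is exactly the left-hand side of the Lemma:
$$\sum_{\substack{p_1+\cdots+p_r\le n_1\\ p_k\ge l_k}}b_{n_1-p}\prod_{k=1}^r(-i_{s_k})^{p_k}A(p_k,l_k)=[u^{n_1}]\Big(\prod_{k=1}^r(e^{-i_{s_k}u}-1)^{l_k}\Big)T(u)^{n_1+1}e^{k_1u}.$$
Expanding each factor by the binomial theorem, $(e^{-i_{s_k}u}-1)^{l_k}=\sum_{m_k=0}^{l_k}(-1)^{l_k-m_k}\binom{l_k}{m_k}e^{-i_{s_k}m_ku}$, and multiplying out gives $\prod_{k=1}^r(e^{-i_{s_k}u}-1)^{l_k}=\sum_{0\le m_j\le l_j}(-1)^{l-m}\binom{\vec l}{\vec m}e^{-(\vec s\cdot\vec m)u}$ with $m=m_1+\cdots+m_r$. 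Plugging this in and applying Lemma \ref{integral} with $k$ replaced by the integer $k_1-\vec s\cdot\vec m$, so that $[u^{n_1}]T(u)^{n_1+1}e^{(k_1-\vec s\cdot\vec m)u}=\binom{n_1+k_1-\vec s\cdot\vec m}{n_1}$, produces the right-hand side of the Lemma.

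The argument is essentially mechanical, so the only point that requires genuine care is the bookkeeping in the middle step: one must verify that passing from the constrained sums over the $p_k$ to the unrestricted ones does not alter the coefficient of $u^{n_1}$, and that the three indexing sets on the two sides really do match. This is precisely where the vanishing $b_m=0$ for $m<0$ and $A(p_k,l_k)=0$ for $p_k<l_k$ are invoked; everything else is a routine manipulation of power series.
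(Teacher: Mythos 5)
Your proof is correct and follows essentially the same route as the paper: both proofs interpret $b_{n_1-p}$ as the coefficient of $u^{n_1-p}$ in $T(u)^{n_1+1}e^{k_1u}$ (the paper writes this as a contour integral, which is the same thing), drop the finite constraints on $p_1,\dots,p_r$ by invoking the vanishing of $b_m$ for $m<0$ and of $A(p,l)$ for $p<l$, sum the resulting unrestricted series via the generating function $\sum_{p}A(p,l)x^p=(e^x-1)^l$ (the paper's Lemma~\ref{3}), expand by the binomial theorem, and finish with Lemma~\ref{integral}. The only difference is notational (coefficient extraction $[u^{n_1}]$ versus the paper's residue integral $\frac{1}{2\pi i}\oint$), and your justification for dropping the constraint $p_1+\cdots+p_r\le n_1$ is stated a bit more cleanly than in the paper.
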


\begin{proof}
$$b_{n_1-p}=\frac{1}{2\pi i}\oint_{\gamma}\frac{T(u)^{n_1+1}e^{k_1u}}{u^{n_1-p+1}}du=\frac{1}{2\pi i}\oint_{\gamma}\frac{u^{p_1}\cdots u^{p_r}T(u)^{n_1+1}e^{k_1u}}{u^{n_1+1}}du.$$
Thus
\begin{align*}
&\sum\limits_{\substack{p_1+\cdots+p_r\leq n_1\\  p_k\geq l_k,\forall 1\leq k\leq r}}  b_{n_1-p}\cdot(-i_{s_1})^{p_1}A(p_1,l_1)\cdots(-i_{s_r})^{p_{r}}A(p_r,l_r)\\
=&\sum\limits_{\substack{p_1+\cdots+p_r\leq n_1\\  p_k\geq l_k,\forall 1\leq k\leq r}}  \frac{1}{2\pi i}\oint_{\gamma}\frac{
  (-i_{s_1}u)^{p_1}A(p_1,l_1)\cdots (-i_{s_r}u)^{p_{r}}A(p_r,l_r) }{u^{n_1+1}}T(u)^{n_1+1}e^{k_1u}du.
\end{align*}

Since $(-i_{s_k}u)^{p_k}A(l_k,p_k)=0, \ p_k>n_1$
\begin{align*}
=&\sum\limits_{\substack{  p_k\geq l_k,\forall 1\leq k\leq r\\ }}  \frac{1}{2\pi i}\oint_{\gamma}\frac{
  (-i_{s_1}u)^{p_1}A(p_1,l_1)\cdots (-i_{s_r}u)^{p_{r}}A(p_r,l_r) }{u^{n_1+1}}T(u)^{n_1+1}e^{k_1u}du\\
  =&\frac{1}{2\pi i}\oint_{\gamma}\frac{
  \prod\limits_{j=1}^{r}\{\sum\limits_{m_j=l_j}^{\infty}(-i_{s_j}u)^{m_j} A(m_j,l_j)\}}{u^{n_1+1}}T(u)^{n_1+1}e^{k_1u}du.
\end{align*}

By Lemma \ref{3},

\begin{align*}
&\sum\limits_{m=l}^{\infty} u^mA(m,l)=(e^{ u}-1)^l\\
=&\frac{1}{2\pi i}\oint_{\gamma}\frac{ \prod\limits_{j=1}^{r}(e^{-i_{s_j}u}-1)^{l_j}}{u^{n_1+1}}T(u)^{n_1+1}e^{k_1u}du\\
=&\frac{1}{2\pi i}\oint_{\gamma}\frac{
  \prod\limits_{j=1}^{r}\sum_{m_j=0}^{l_j}(-1)^{l_j-m_j}\binom{l_j}{ m_j}e^{-m_ji_{s_j}u}}{u^{n_1+1}}T(u)^{n_1+1}e^{k_1u}du\\
=&\sum\limits_{m_1=0}^{l_1}\cdots\sum\limits_{m_r=0}^{l_r}(-1)^{\sum\limits_{j=1}^rl_j-m_j}
\binom{l_1}{ m_1}\cdots \binom{l_r}{ m_r}\frac{1}{2\pi i}\oint_{\gamma}\frac{e^{-(\sum\limits_{j=1}^rm_ji_{s_j})u}}{u^{n_1+1}}T(u)^{n_1+1}e^{k_1u}du\\
=&\sum_{\substack{1\leq j\leq r\\ 0\leq m_j\leq l_r\\m=\sum_{j=1}^rm_j}}(-1)^{l-m}\binom{\vec l}{\vec m}\binom{n_1+k_1-\vec s\cdot \vec m}{n_1}. \ (\mathrm{By\ Lemma}\ \ref{integral})
\end{align*}

\end{proof}

Combining Lemma \ref{10} and Lemma \ref{5}, we deduce Proposition \ref{mainprop}.


\subsection{$\hat{A}$-genus and $\alpha$-invariant} Recall that in Section 2, we have shown that
$$ c(H_{n_1,n_2}^{\bI}(d_1,d_2))=i^*(c(V)(1+d_1u+d_2v)^{-1}),$$
$$ c(V)=(1+u)^{n_{1}+1}(1+v)\prod_{j=1}^{n_2}(1+v-i_{j}u).$$
Therefore
$$ p(H_{n_1,n_2}^{\bI}(d_1,d_2))=i^*\{(1+u^2)^{n_{1}+1}(1+v^2)\prod_{j=1}^{n_2}(1+(v-i_{j}u)^2)(1+(d_1u+d_2v)^2)^{-1}\}.$$

The characteristic power series of the $\widehat{A}$-genus is just $\frac{\sqrt{z}/2}{sinh(\sqrt{z}/2)}=Q(x)$ for $z=x^2$.

For $n_1+n_2\equiv\ 1\ \mod\ 2$, by Poincar\'e duality and Proposition \ref{mainprop}, one has
\be
\begin{split}
&\widehat{A}(H_{n_1,n_2}^{\bI}(d_1,d_2))\\
= &\langle Q(u)^{n_1+1}Q(v)\prod_{j=1}^{n_2}Q(v-i_ju)\cdot Q(d_1u+d_2v)^{-1}, [V]\cap(d_1u+d_2v)\rangle \\
=&\langle Q(u)^{n_1+1}Q(v)\prod_{j=1}^{n_2}Q(v-i_ju)\cdot \frac{e^{d_1u+d_2v}-1}{(d_1u+d_2v) e^{d_1u+d_2v}}e^{\frac{d_1u+d_2v}{2}}(d_1u+d_2v),[V]\rangle\\
=&\langle Q(u)^{n_1+1}Q(v)\prod_{j=1}^{n_2}Q(v-i_ju)\cdot(e^{\frac{d_1u+d_2v}{2}}-e^{\frac{-d_1u-d_2v}{2}}),[V]\rangle\\
=&F_{n_1,n_2,\bI}(d_1,d_2)-F_{n_1,n_2,\bI}(-d_1,-d_2).\\
\end{split}
\ee
So we have
\begin{theorem}\label{Ahat}
For  $n_1+n_2\equiv\ 1\ \mod\ 2$ and $d_1, d_2 \in \Z $ .
$$\widehat{A}(H_{n_1,n_2}^{\bI}(d_1,d_2))=F_{n_1,n_2,\bI}(d_1,d_2)-F_{n_1,n_2,\bI}(-d_1 ,-d_2 ).$$
\end{theorem}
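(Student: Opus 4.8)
The plan is to compute $\widehat{A}(H_{n_1,n_2}^{\bI}(d_1,d_2))$ directly by pushing the relevant characteristic class from the hypersurface to the ambient manifold $V$ via Poincar\'e duality, and then recognizing the result as $F_{n_1,n_2,\bI}(d_1,d_2)-F_{n_1,n_2,\bI}(-d_1,-d_2)$ using Proposition \ref{mainprop}. First I would recall from Section \ref{toric} that $p(H_{n_1,n_2}^{\bI}(d_1,d_2))=i^*\{(1+u^2)^{n_1+1}(1+v^2)\prod_{j=1}^{n_2}(1+(v-i_ju)^2)(1+(d_1u+d_2v)^2)^{-1}\}$, which tells us the full $\widehat{A}$-characteristic class of the hypersurface is $i^*$ of $Q(u)^{n_1+1}Q(v)\prod_{j=1}^{n_2}Q(v-i_ju)Q(d_1u+d_2v)^{-1}$, where $Q(x)=\frac{x/2}{\sinh(x/2)}$ is the $\widehat{A}$-series (here implicitly $Q$ is evaluated on degree-$2$ classes, so $Q(u)$ really means the series in $u^2$).

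Next I would apply the projection formula: since $H_{n_1,n_2}^{\bI}(d_1,d_2)$ is Poincar\'e dual in $V$ to $d_1u+d_2v$, evaluating a class on $[H_{n_1,n_2}^{\bI}(d_1,d_2)]$ equals evaluating $i_*(1)=(d_1u+d_2v)$ times the pullback class on $[V]$. So $\widehat{A}(H_{n_1,n_2}^{\bI}(d_1,d_2))=\langle Q(u)^{n_1+1}Q(v)\prod_{j=1}^{n_2}Q(v-i_ju)Q(d_1u+d_2v)^{-1}(d_1u+d_2v),[V]\rangle$. The key algebraic simplification is the identity $Q(x)^{-1}\cdot x = \frac{1-e^{-x}}{x}e^{x/2}\cdot x=e^{x/2}-e^{-x/2}$, applied with $x=d_1u+d_2v$. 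This collapses the awkward $Q(d_1u+d_2v)^{-1}(d_1u+d_2v)$ factor into the clean difference $e^{(d_1u+d_2v)/2}-e^{-(d_1u+d_2v)/2}$. Then the integrand splits as a difference of two terms, each of which is exactly of the form appearing in Proposition \ref{mainprop}: $\langle Q(u)^{n_1+1}Q(v)\prod_{j=1}^{n_2}Q(v-i_ju)e^{\frac{d_1u+d_2v}{2}},[V]\rangle = F_{n_1,n_2,\bI}(d_1,d_2)$, and the second term with $e^{-\frac{d_1u+d_2v}{2}}$ is $F_{n_1,n_2,\bI}(-d_1,-d_2)$ by the same proposition applied with $(d_1,d_2)$ replaced by $(-d_1,-d_2)$.

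The only real subtlety — and hence the step I expect to require the most care — is making sure the parity hypothesis $n_1+n_2\equiv 1\pmod 2$ is used correctly and that the bookkeeping between $\dim_\R H_{n_1,n_2}^{\bI}(d_1,d_2)=2(n_1+n_2-1)$ and the degree of the surviving cohomology classes is consistent. In particular one should check that evaluation against $[V]$ (real dimension $2(n_1+n_2)$) picks out the correct total degree $u^{n_1}v^{n_2}$ component after multiplying by $(d_1u+d_2v)$, so that the formula $F_{n_1,n_2,\bI}$ of Proposition \ref{mainprop} is being invoked with matching indices. One should also remark that the hypothesis $n_1+n_2$ odd is what guarantees $\dim H_{n_1,n_2}^{\bI}(d_1,d_2)$ is divisible by $4$, so $\widehat{A}$ is the genuine $\widehat{A}$-genus and not automatically zero; when $n_1+n_2$ is even the left side is $0$ by convention, and indeed in Proposition \ref{mainprop} one also needs the domain conditions to be satisfied. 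With these checks in place, the chain of equalities above is a direct computation and the theorem follows immediately.
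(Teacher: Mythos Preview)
Your proposal is correct and follows essentially the same route as the paper: both compute $\widehat{A}(H_{n_1,n_2}^{\bI}(d_1,d_2))$ by pushing to $V$ via Poincar\'e duality, use the identity $Q(x)^{-1}x=e^{x/2}-e^{-x/2}$ to split the integrand, and then invoke Proposition~\ref{mainprop} twice. Your discussion of the parity hypothesis is, if anything, more careful than the paper's, which simply states $n_1+n_2\equiv 1\pmod 2$ as a standing assumption without further comment.
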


$\, $

The $\alpha$-invariant (\S6, Chapter V in \cite{Bo66}) is a ring homomorphism
$$\alpha: \Omega_*^{Spin}\to KO_*(pt). $$

As we all know, $KO_{*}$ is 8 periodic,
\begin{equation*}
KO_{n}(pt)=\begin{cases}
0, & \mathrm{for} \ n\equiv 3,5,6,7\ \mod\ 8,\\
\Z, &  \mathrm{for}\  n\equiv 0, 4\ \mod\ 8, \\
\Z_2, &  \mathrm{for}\ n\equiv 1, 2\ \mod\ 8,
\end{cases}
\end{equation*}
and
\begin{equation*}
\alpha(M)=\begin{cases}
\widehat{A}(M), & \mathrm{for} \ n\equiv 0\ \mod\ 8,\\
\frac{1}{2}\widehat{A}(M) &  \mathrm{for}\  n\equiv 4\ \mod\ 8.

\end{cases}
\end{equation*}
The $\alpha$-invariant of an $8k+2$ dimensional manifold is the  mod $ 2$ index of Atiyah-Singer Dirac operator, which is difficult to compute. To perform the computation, we use the following formula.

\begin{theorem}[\protect Zhang \cite{Zh93,Zh94,Zh96R}, c.f. \cite{Zh96E}] \label{Zhang-Rokhlin}
Let $M$ be a compact connected spin$^{c}$-manifold of dimension $8k+4$, $\xi$ is a complex line bundle on $M$ with $c_1(\xi)\equiv \omega_2(TM)$. $B$ is the spin submanifold of $M$ \poincare \ dual to $c_1(\xi)\in H^{2}(M;\Z)$, then $B$ carries an induced spin structure and

$$\alpha (B)\equiv\langle \widehat{A}(M)exp^{\frac{c_1(\xi)}{2}},[M]\rangle\ \mod \ 2. $$

\end{theorem}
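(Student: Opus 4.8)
The plan is to deduce the congruence from an index comparison between a Dirac-type operator on $M$ and the spin Dirac operator of $B$, with the mod $2$ reduction coming out of a localization argument. Since Theorem~\ref{Zhang-Rokhlin} is precisely the analytic Rokhlin congruence of \cite{Zh93,Zh94,Zh96R}, I would expect the authors themselves simply to cite Zhang; what follows is how the proof goes.

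First I would record the topology. Since $B$ is \poincare\ dual to $c_1(\xi)$, the normal bundle of the inclusion $i\colon B\hookrightarrow M$ is $\nu\cong i^{*}\xi$ as complex line bundles, so $w_2(\nu)=i^{*}(c_1(\xi)\bmod 2)$; combining this with $TM|_{B}\cong TB\oplus\nu$ and the hypothesis $w_2(TM)\equiv c_1(\xi)\pmod 2$ gives $w_2(TB)=0$ in $H^{2}(B;\Z_2)$, so $B$ is spin. Moreover the splitting $TM|_{B}\cong TB\oplus\nu$, together with the chosen spin$^{c}$ structure on $M$ (determinant line $\xi$) and the canonical spin$^{c}$ structure on the complex line bundle $\nu$, determines a distinguished spin structure on $B$; this is the induced structure with respect to which $\alpha(B)\in KO_{8k+2}(\mathrm{pt})=\Z_2$ is defined.

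Next I would set up the two indices. On $M$, take the spin$^{c}$ Dirac operator $D_M$ of that spin$^{c}$ structure; its index is an integer, and by the Atiyah--Singer index theorem $\operatorname{ind}D_M=\langle\widehat A(M)\,e^{c_1(\xi)/2},[M]\rangle$, which is precisely the right-hand side of the statement. So it suffices to prove $\operatorname{ind}D_M\equiv\alpha(B)\pmod 2$, where $\alpha(B)$ is the $\Z_2$-valued mod $2$ index of the spin Dirac operator of $B^{8k+2}$. To this end I would fix a Hermitian metric and connection on $\xi$ and a smooth section $s$ of $\xi$ transverse to zero with $s^{-1}(0)=B$, and deform $D_M$ to a family $D_{M,T}=D_M+T\,\Psi_s$, where $\Psi_s$ is a self-adjoint zeroth-order operator built from $s$ via the Clifford action of the spin$^{c}$ structure and vanishing exactly on $B$. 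The index is independent of $T$, while as $T\to+\infty$ the small eigenvalues of $D_{M,T}^{2}$ concentrate in a tubular neighborhood of $B$, where, after rescaling the normal directions, $D_{M,T}$ is modeled by $D_B\,\widehat\otimes\,\mathcal H$ with $\mathcal H$ a fiberwise harmonic-oscillator-type operator on $\nu$. One checks that $\mathcal H$ has a one-dimensional kernel and that, by supersymmetry, its nonzero eigenspaces contribute an even amount to the index; hence for $T\gg 0$ the kernel of $D_{M,T}$ is isomorphic, compatibly with the pertinent real Clifford-module structures, to $\ker D_B$ up to an even-dimensional correction. Matching the real Clifford modules in dimensions $8k+4$ and $8k+2$ then forces the integer $\operatorname{ind}D_M$ to reduce mod $2$ to the mod $2$ index of $D_B$, which is the theorem. (Zhang's original route is somewhat different: he computes the adiabatic limit of the reduced $\eta$-invariant over the unit normal sphere bundle of $B$ and feeds it into the Atiyah--Patodi--Singer index theorem, the mod $2$ reduction then arising from a spectral-flow term.)

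The hard part is this last step: the bookkeeping of real (and, in places, quaternionic) structures that forces an \emph{integer} index on $M^{8k+4}$ to reduce to a \emph{mod $2$} index on $B^{8k+2}$. Concretely one must establish the uniform resolvent estimates justifying the concentration near $B$, verify that the fiberwise model operator has a one-dimensional kernel of the correct reality type with the rest of its spectrum supersymmetrically paired, and check that the real spin structure on $B$ produced by the normal splitting is exactly the induced one entering the definition of $\alpha(B)$. By contrast, the topological assertion that $B$ is spin, and the identification of $\operatorname{ind}D_M$ with the stated characteristic number via Atiyah--Singer, are routine.
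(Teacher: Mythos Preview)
You anticipated correctly: the paper does not prove Theorem~\ref{Zhang-Rokhlin} at all; it is stated as Zhang's result with citations to \cite{Zh93,Zh94,Zh96R} and then applied immediately to compute $\alpha(H_{n_1,n_2}^{\bI}(d_1,d_2))$. So there is no ``paper's own proof'' to compare against.

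Your sketch of the underlying argument is broadly accurate as an outline of how such congruences are established. The topological part (that $B$ is spin with an induced structure) is exactly as you say. For the analytic part, the Witten-type deformation $D_{M,T}=D_M+T\Psi_s$ you describe is indeed one viable route, and you are right that Zhang's own proof in \cite{Zh94} goes differently, via the adiabatic limit of the reduced $\eta$-invariant on the circle bundle (the unit normal bundle of $B$) and the Atiyah--Patodi--Singer theorem, with the mod~$2$ reduction emerging from a spectral-flow/holonomy term. The part you flag as ``hard'' --- tracking the real/quaternionic structures so that an integer index in dimension $8k+4$ collapses to a $\Z_2$-valued index in dimension $8k+2$ --- is genuinely the delicate point, and your description of what needs to be checked is honest. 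Since the paper treats the theorem as a black box, your write-up already goes well beyond what the paper requires.
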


Thus when dim$_{\R}H_{n_1,n_2}^{\bI}(d_1,d_2)\equiv 2\ \mod\ 8$, applying Zhang's theorem, we have
\be
\begin{split}
&\alpha(H_{n_1,n_2}^{\bI}(d_1,d_2))\\
\equiv& \langle \widehat{A}(V)e^{\frac{d_1u+d_2v}{2}}, [V] \rangle \\
\equiv&
\langle Q(u)^{n_1+1}\prod_{j=1}^{n_2}Q(v-i_ju)Q(v)e^{\frac{d_1u+d_2v }{2}},[V]\rangle \ \mod\ 2.
\end{split} \ee

By Proposition \ref{mainprop}, we have
\begin{theorem}\label{13}
If $H_{n_1,n_2}^{\bI}(d_1,d_2)$ is spin and  $n_1+n_2\equiv2\ \mod\ 4$, then
$$\alpha(H_{n_1,n_2}^{\bI}(d_1,d_2))\equiv F_{n_1,n_2,\bI}(d_1,d_2)\ \mod\ 2.$$
\end{theorem}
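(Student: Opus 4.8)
The plan is to combine Zhang's analytic Rokhlin congruence formula (Theorem \ref{Zhang-Rokhlin}) with the integral computation packaged in Proposition \ref{mainprop}. First I would recall the hypotheses: $H_{n_1,n_2}^{\bI}(d_1,d_2)$ is spin, so by Proposition \ref{spincondition} there exist $k_1,k_2\in\Z$ with $d_1=2k_1+n_1+1-\sigma_1$ and $d_2=2k_2+n_2+1$; and $n_1+n_2\equiv 2\ \mod\ 4$, which means $\dim_{\R}H_{n_1,n_2}^{\bI}(d_1,d_2)=2(n_1+n_2)-2\equiv 2\ \mod\ 8$. In particular the ambient complex manifold $V$ has real dimension $2(n_1+n_2)\equiv 4\ \mod\ 8$, so $V$ is exactly the type of manifold to which Theorem \ref{Zhang-Rokhlin} applies.

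The key steps, in order, are as follows. Take $M=V$ in Theorem \ref{Zhang-Rokhlin}. Since $V$ is complex it carries a canonical spin$^c$ structure, and $H_{n_1,n_2}^{\bI}(d_1,d_2)$ is \poincare\ dual to $c_1(\xi)=d_1u+d_2v$ where $\xi$ is the line bundle from the adjunction argument in the proof of Proposition \ref{spincondition}; the spin condition guarantees $c_1(\xi)\equiv\omega_2(TV)\ \mod\ 2$, and the induced spin structure on $H_{n_1,n_2}^{\bI}(d_1,d_2)$ is precisely the one in our convention. Thus Theorem \ref{Zhang-Rokhlin} gives
$$\alpha(H_{n_1,n_2}^{\bI}(d_1,d_2))\equiv\langle\widehat{A}(V)e^{\frac{d_1u+d_2v}{2}},[V]\rangle\ \mod\ 2.$$
Next I would identify $\widehat{A}(V)$: from Section \ref{toric} we have $p(V)=(1+u^2)^{n_1+1}(1+v^2)\prod_{j=1}^{n_2}(1+(v-i_ju)^2)$, and since the characteristic power series of the $\widehat{A}$-genus is $Q(x)$ with $z=x^2$, we get $\widehat{A}(V)=Q(u)^{n_1+1}Q(v)\prod_{j=1}^{n_2}Q(v-i_ju)$. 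Substituting,
$$\alpha(H_{n_1,n_2}^{\bI}(d_1,d_2))\equiv\Big\langle Q(u)^{n_1+1}Q(v)\prod_{j=1}^{n_2}Q(v-i_ju)e^{\frac{d_1u+d_2v}{2}},[V]\Big\rangle\ \mod\ 2.$$
Finally, apply Proposition \ref{mainprop}, which identifies exactly this cohomological pairing with $F_{n_1,n_2,\bI}(d_1,d_2)$, to conclude $\alpha(H_{n_1,n_2}^{\bI}(d_1,d_2))\equiv F_{n_1,n_2,\bI}(d_1,d_2)\ \mod\ 2$.

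Most of the work is already done: the genuinely hard computational content lives in Proposition \ref{mainprop} (Lemmas \ref{deriviate of g}, \ref{integral}, \ref{10}, \ref{5}), so the proof of Theorem \ref{13} is essentially an assembly of previously established pieces. The main point requiring care is verifying that the hypotheses of Theorem \ref{Zhang-Rokhlin} are exactly met — namely that $\dim V\equiv 4\ \mod\ 8$ (equivalently $n_1+n_2\equiv 2\ \mod\ 4$), that the spin$^c$ structure on $V$ restricted along the \poincare\ dual submanifold induces the spin structure fixed by our convention, and that $c_1(\xi)\equiv\omega_2(TV)$ holds (which is where the integrality hypothesis from Proposition \ref{spincondition} enters). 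A secondary, purely bookkeeping point is that the pairing in Theorem \ref{Zhang-Rokhlin} is evaluated over $[V]$ (not over the submanifold), which is precisely the form output by Proposition \ref{mainprop}, so no further Poincar\'e-duality manipulation is needed — unlike in the $\widehat{A}$-genus computation of Theorem \ref{Ahat}, here there is no "$-F(-d_1,-d_2)$" term because we are not capping with $d_1u+d_2v$.
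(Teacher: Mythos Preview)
Your proposal is correct and follows essentially the same approach as the paper: apply Zhang's Rokhlin congruence (Theorem \ref{Zhang-Rokhlin}) to $M=V$ with $\xi$ the line bundle satisfying $c_1(\xi)=d_1u+d_2v$, then identify the resulting pairing $\langle\widehat{A}(V)e^{(d_1u+d_2v)/2},[V]\rangle$ with $F_{n_1,n_2,\bI}(d_1,d_2)$ via Proposition \ref{mainprop}. Your write-up is in fact more explicit than the paper's in checking the hypotheses of Theorem \ref{Zhang-Rokhlin} (the dimension condition on $V$ and the congruence $c_1(\xi)\equiv\omega_2(TV)$), but the logical structure is identical.
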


\subsection{Some examples}\label{examples}{\ }\\
\indent
In this subsection, we assume $n_1+n_2 \equiv 1\ \mod \ 2$ when discussing $\widehat{A}$-genus and $n_1+n_2 \equiv 2\ \mod \ 4$ when discussing $\alpha$-invariant.
\vskip.2cm
{\bf Observation:}
From formula (\ref{observe}), we can observe that if $r$ is greater than the number of non-zero $i_j$, the summand vanishes.

In particular, if $\bI =\bf{0}$ or $(j,0,\cdots,0)$, $F_{n_1, n_2, \bI}(d_1,d_2)$ can be simplified to a great extent. More precisely, if $\bI =\bf{0}$, the summand is non vanishing only when $r=0$; if $(j,0,\cdots,0)$,  the summand is non vanishing only when $r=0,1$.
\vskip.3cm

By above observation, we have
\be \label{Ffac} F_{n_1,n_2,\mathbf{0}}(d_1,d_2)=\binom{n_1+k_1}{n_1}\binom{n_2+k_2}{n_2}.\ee

\begin{example}\label{3.1}
When $\bI=\mathbf{0}$, $V=\CC P^{n_1}\times\CC P^{n_2}$, $H_{n_1,n_2}^{\mathbf{0}}( 1,1)$ is the usual Milnor hypersurface. We have
\begin{equation}
\begin{aligned}
\widehat{A}(H_{n_1,n_2}^{\mathbf{0}}( 1,1))
&=F_{n_1,n_2,\mathbf{0}}( 1,1)-F_{n_1,n_2,\mathbf{0}}( -1,-1)\\
&=\binom{ \frac{n_1}{2}}{n_1}\binom{ \frac{n_2}{2}}{n_2}-\binom{ \frac{n_1}{2}-1}{n_1}\binom{ \frac{n_2}{2}-1}{n_2}\\
&=\binom{\frac{n_1}{2}}{n_1}\binom{\frac{n_2}{2}}{n_2}-(-1)\binom{\frac{n_1}{2}}{n_1}(-1)\binom{\frac{n_2}{2}}{n_2}\\
&=0
\end{aligned}
\end{equation}
and
\begin{equation}
\alpha(H_{n_1,n_2}^{\mathbf{0}}( 1,1))\equiv \binom{ \frac{n_1}{2}}{n_1}\binom{ \frac{n_2}{2}}{n_2}\equiv 0\ \mod\ 2,
\end{equation}
which coincides with the result in \cite[p. 40]{HBJ94} that the $\widehat{A}$-genus of Milnor hypersurface always vanishes.
\end{example}

\begin{example}\label{3.2}
When $\bI=\mathbf{0}$, $V=\CC P^{n_1}\times\CC P^{n_2}$, we have
\begin{align*}
\widehat{A}(H_{n_1,n_2}^{\mathbf{0}}(d_1,d_2))
&=F_{n_1,n_2,\bI=\mathbf{0}}(d_1,d_2)-F_{n_1,n_2,\bI=\mathbf{0}}(-d_1,-d_2)\\
&=\binom{ \frac{d_1+n_1-1}{2}}{n_1}\binom{ \frac{d_2+n_2-1}{2}}{n_2}-\binom{ \frac{-d_1+n_1-1}{2}}{n_1}\binom{ \frac{-d_2+n_2-1}{2}}{n_2}\\
&=\binom{n_1+k_1}{n_1}\binom{n_2+k_2}{n_2}-\binom{-k_1-1}{n_1}\binom{-k_2-1}{n_2}\\
&=(1-(-1)^{n_1+n_2})\binom{n_1+k_1}{n_1}\binom{n_2+k_2}{n_2}\\
&=2\binom{n_1+k_1}{n_1}\binom{n_2+k_2}{n_2}
\end{align*}

 \vskip.3cm

When $k_1=-\frac{n_1+1-d_1}{2},k_2=-\frac{n_2+1-d_2}{2}$ are integers,  $H_{n_1,n_2}^{\mathbf{0}}(d_1,d_2)$ is spin and
\begin{enumerate}
  \item $\widehat{A}(H_{n_1,n_2}^{\mathbf{0}}(d_1,d_2))=0\iff  -n_i < k_i< 0,\ \text{i.e.} \ -n_i\leq d_i\leq n_i, \ i=1,2$;
  \item $ \alpha(H_{n_1,n_2}^{\mathbf{0}}(d_1,d_2))\equiv \binom{n_1+k_1}{n_1}\binom{n_2+k_2}{n_2}\ \mod\ 2.$
\end{enumerate}

\end{example}

\begin{example}\label{3.4}
When $\bI=(j,0,\cdots,0), \ j\in \Z$,\ By the previous observation,
\begin{small}
\begin{align*}
&\widehat{A}(H_{n_1,n_2}^{(j,0,\cdots,0)}(d_1,d_2))=2\binom{n_1+k_1 }{n_1}\binom{n_2+k_2 }{n_2}+\sum_{l=1}^{n_1}\sum_{m=0}^{l}(-1)^{m }\binom{l}{m}\\
&\cdot\left\{\binom{n_1+k_1-mj }{n_1}\binom{n_2+k_2+l-1}{n_2+l}-\binom{-k_1-1+(1-m)j}{n_1}\binom{-k_2-1+l-1}{n_2+l}\right\}.\\
&\ \\
&\alpha(H_{n_1,n_2}^{(j,0,\cdots,0)}(d_1,d_2))\\
&\equiv \binom{n_1+k_1}{n_1}\binom{n_2+k_2}{n_2}+\sum\limits_{l=1}^{n_1}\sum\limits_{m=0}^{l}(-1)^{m }\binom{l}{m}\binom{n_1+k_1-mj }{n_1}\binom{n_2+k_2+l-1}{n_2+l}.\\
\end{align*}
\end{small}
Assume $n_1=2, n_1+n_2\equiv 1\ \mod\ 2,\ d_1=1, d_2=n_2+1$, we have
$$\widehat{A}(H_{2,n_2}^{(j,0,\cdots,0)}(1,n_2+1)=\frac{j}{2}(\frac{j}{2}+1)$$
Thus $$\widehat{A}(H_{2,n_2}^{(j,0,\cdots,0)}(1,n_2+1)\neq0 \iff j\neq 0,-2.$$

Note that if  $ j\equiv 0 \ \mod\ 2$, then $H_{2,n_2}^{(j,0,\cdots,0)}(1,n_2+1)$ is spin.
  \vskip.2cm
Since $\widehat{A}(H_{2,n_2}^{\bf 0}(1,n_2+1)=0$,  this provides a good example to illustrate the difference between twisted Milnor hypersurface and non twisted one.

\end{example}

\vskip.5cm
\begin{example}\label{3.3}
Assume $d_1=d_2=1$ and $k_1=-\frac{n_1-\sigma_1}{2},k_2=-\frac{n_2}{2}$ are integers, then $H_{n_1,n_2}^{\bI}(1,1)$ is spin and
\be
\begin{aligned}
&\ \ F_{n_1,n_2, \bI}(1,1)\\
=&\sum_{  \substack{0\leq r\leq n_2\\ \forall 1\leq j\leq r\\  l_j\geq 1,l\leq n_1,0\leq m_j\leq l_j\\ 1\leq s_1<s_2<\cdots\ s_r \leq n_2}}
(-1)^{\sum_{j=1}^rm_j}\binom{\vec l}{ \vec m}\binom{n_1+\frac{\sigma_1-n_1}{2}-\vec s\cdot \vec m }{n_1}
\binom{n_2+\frac{-n_2}{2}+\sum_{j=1}^rl_j-r}{n_2+\sum_{j=1}^rl_j},
\end{aligned}
\ee
\noindent {\bf Claim:}
$$\binom{\frac{n_2}{2}+\sum_{j=1}^rl_j-r}{n_2+\sum_{j=1}^rl_j}=0.$$

In fact, let $m=\frac{n_2}{2}+\sum_{j=1}^rl_j-r, \ n=n_2+\sum_{j=1}^rl_j$, we have
\vskip.3cm
\begin{itemize}
  \item $m=\frac{n_2}{2}+\sum_{j=1}^rl_j-r$ positive integer.
  \vskip.3cm
  (since $,l_j\geq 1$, $\sum_{j=1}^{r}l_j- r\geq 0$ and $k_2=\frac{-n_2}{2}$ integer)
  \vskip.3cm
  \item $m<n$ both positive integers.
  \vskip.3cm
  ($\frac{n_2}{2}+\sum_{j=1}^rl_j-r <n_2+\sum_{j=1}^rl_j \ \iff -r <\frac{n_2}{2}$)
  \vskip.3cm
  \item $\binom{m}{n}=0$ for two positive integers $m<n$.
\end{itemize}
Therefore $ F_{n_1,n_2, \bI}(1,1)=0$. Similarly, $F_{n_1,n_2, \bI}(-1,-1)=0$. Thus
$$\widehat{A}(H_{n_1,n_2}^{\bI}(1,1))=0,\ \alpha(H_{n_1,n_2}^{\bI}(1,1))=0.$$

More generally, as long as $n_2$ is even, we have \ $\widehat{A}(H_{n_1,n_2}^{\bI}(d_1,1))=0$,\ $\alpha(H_{n_1,n_2}^{\bI}(d_1,1))=0$.
\end{example}

\begin{example}\label{3.5}
For $n_1=1$ and $ n_2\equiv 1\ \mod\ 4$, we have
\begin{small}
\begin{align*}
&\alpha(H_{n_1,n_2}^{\bI}(d_1,d_2))\\
\equiv& \sum\limits_{\substack{l =1\\1\leq s \leq n_2}}\sum\limits_{m =0}^{l }(-1)^{m  }\binom{l }{m }\binom{n_1+k_1-i_{s }m }{n_1}\binom{n_2+k_2+l -1}{n_2+l }+\binom{k_1+1}{1}\binom{n_2+k_2}{n_2}\ \mod\ 2.\\
\equiv& \sum_{1\leq s \leq n_2}\binom{n_2+k_2}{n_2+1}\sum_{m =0}^1(-1)^{m }(1+k_1-i_sm)+\binom{k_1+1}{1}\binom{n_2+k_2}{n_2}\ \mod\ 2.\\
\equiv& \binom{n_2+k_2}{n_2+1}\sum_{1\leq s \leq n_2}(1+k_1-(1+k_1-i_s))+\binom{k_1+1}{1}\binom{n_2+k_2}{n_2}\ \mod\ 2.\\
\equiv&\sigma_1\binom{n_2+k_2}{n_2+1}+\binom{k_1+1}{1}\binom{n_2+k_2}{n_2} \ \ \mod\ 2.
\end{align*}
\end{small}
\end{example}

\begin{example}\label{3.6}
For $n_1=2$ and  $n_2\equiv 0\ \mod\ 4$, we have
\begin{align*}
 &\alpha(H_{n_1,n_2}^{\bI}(d_1,d_2))\\
 \equiv& \binom{k_1+2}{2}\binom{n_2+k_2}{n_2}+\sum\limits_{\substack{1 \leq l \leq 2\\1\leq s \leq n_2}}\sum\limits_{m =0}^{l }(-1)^{m }\binom{l}{m}\binom{n_1+k_1-i_{s} m}{n_1}\binom{n_2+k_2+l-1}{n_2+l}+ \\
  &\!\sum\limits_{\substack{ 1 \leq l_1\leq 2\\ 1 \leq l_2\leq 2 \\1\leq s_1< s_2\leq n_2}}\!\!\!\sum\limits_{\substack{0 \leq m_1 \leq l_1\\ 0\leq m_2 \leq l_2}}(-1)^{m_1+m_2}\binom{l_1}{m_1}\binom{l_2}{m_2}\binom{n_1+k_1-i_{s_1}m_1-i_{s_2}m_2}{n_1}\binom{n_2+k_2+l_1+l_2-2}{n_2+l_1+l_2}\\
\equiv& \binom{k_1+2}{2}\binom{n_2+k_2}{n_2}+\left(-\frac{\sigma_1^2-2\sigma_2}{2}+\frac{(2k_1+3)\sigma_1}{2}\right)\binom{n_2+k_2}{n_2+1}\\
& \ \ +(\sigma_1^2-2\sigma_2)\binom{n_2+k_2+1}{n_2+2}+\sigma_2\binom{n_2+k_2}{n_2+2}\ \mod\ 2.
\end{align*}
\end{example}


\section{Applications}

In this section, we give some applications of $\widehat{A}$-genus and $\alpha$-invariant of twisted Milnor hypersurfaces.
\vskip.2cm
First, we investigate the existence of circle actions on twisted Milnor hypersurfaces.

\begin{theorem}[ Atiyah-Hirzebruch\cite{AH70}]
If the circle $S^1$ acts nontrivially on a connected spin manifold $M$, then $\widehat{A}(M)=0$.
\end{theorem}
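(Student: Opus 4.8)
The plan is to deduce the vanishing of $\widehat A(M)$ from the vanishing of the \emph{equivariant} index of the spin Dirac operator, via the Atiyah--Segal--Singer fixed-point formula. Throughout I assume $M$ is closed (otherwise $\widehat A(M)$ is not a well-defined characteristic number) and that $\dim M=4k$, since in all other dimensions $\widehat A(M)=0$ by the stated convention. First I would promote the nontrivial $S^1$-action to an action on the principal spin bundle of $M$; this is possible after replacing $S^1$ by its connected double cover if necessary, and once such a lift is chosen the Atiyah--Singer Dirac operator $D^{+}$ becomes an $S^1$-equivariant elliptic operator. Its equivariant index $\operatorname{ind}_{S^1}(D^{+})$ lies in the representation ring $R(S^1)=\Z[t,t^{-1}]$, hence is a Laurent polynomial in $t$, and the restriction homomorphism $R(S^1)\to R(\{e\})=\Z$ carries it to the ordinary index $\operatorname{ind}(D^{+})=\widehat A(M)$ (Atiyah--Singer index theorem). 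So it suffices to show $\operatorname{ind}_{S^1}(D^{+})=0$ in $R(S^1)$.

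Next I would invoke the Atiyah--Segal--Singer equivariant index theorem to localize to the fixed-point set. For $t\neq 1$ it gives
\[
\operatorname{ind}_{S^1}(D^{+})(t)=\sum_{F\subseteq M^{S^1}}\mu_F(t),
\]
the sum running over the connected components $F$ of $M^{S^1}$, where each $\mu_F(t)$ is assembled from $\widehat A(TF)$ and the $S^1$-weight decomposition $N_F=\bigoplus_{\nu\neq 0}N_\nu$ of the normal bundle of $F$, and is a rational function of $t$ regular away from roots of unity. Here the spin hypothesis is essential: because $M$ is spin, each $F$ inherits a (possibly twisted) spin structure and the spinor bundle of $N_F$ enters with the symmetric half-integer weights $\pm\nu/2$, which is what makes the individual terms well defined and, more importantly, controls their growth in $t$.

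The key step is the asymptotic estimate on each $\mu_F$. Using the $\nu\mapsto-\nu$ symmetry of the spinorial contributions and carefully counting the exponents of $t$ in the numerators against those in the Euler-class denominators, one shows $\mu_F(t)\to 0$ both as $t\to 0$ and as $t\to\infty$. Consequently the Laurent polynomial $\operatorname{ind}_{S^1}(D^{+})(t)$ has limit $0$ at $0$ and at $\infty$, and a Laurent polynomial with this property vanishes identically. Evaluating at $t=1$ then gives $\widehat A(M)=\operatorname{ind}(D^{+})=0$.

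The hard part will be exactly this asymptotic/degree bound: verifying that the spin condition forces \emph{every} fixed-point contribution $\mu_F$ to decay at both ends. This is where one must match precisely the half-integer weight shifts coming from the spinor bundle of $N_F$ against the weights appearing in the denominators; for a merely oriented, non-spin $M$ those shifts are by integers, the decay argument breaks down, and the conclusion is false in general (e.g.\ $\CC P^2$ with its standard toric $S^1$-action has $\widehat A(\CC P^2)=-\tfrac{1}{8}\neq0$). The remaining points --- lifting the action to the spin structure, and checking that the components $F$ of $M^{S^1}$ together with their normal bundles carry compatible spin/Clifford structures so that all local terms are genuinely defined --- are standard but must be settled before the fixed-point formula can be applied.
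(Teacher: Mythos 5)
The paper does not prove this statement; it is quoted as a cited theorem of Atiyah--Hirzebruch \cite{AH70}, so there is no internal proof to compare against. Your sketch is a faithful outline of the argument in that reference: lift the circle action to the spin principal bundle (passing to the double cover of $S^1$ if need be), identify $\widehat A(M)$ with the value at $t=1$ of the equivariant index in $R(S^1)=\Z[t,t^{-1}]$, localize via the fixed-point formula, and conclude that the Laurent polynomial vanishes because each fixed-component contribution $\mu_F(t)$ tends to $0$ at both $t\to 0$ and $t\to\infty$. The estimate you defer is correct and is indeed the heart of the argument: each normal factor in the Lefschetz denominator has the symmetric spinorial form $\bigl(t^{\nu/2}e^{x/2}-t^{-\nu/2}e^{-x/2}\bigr)^{-1}$ with normal weight $\nu\neq 0$, hence decays like $t^{\mp|\nu|/2}$ at the two ends; since $M$ is connected and the action nontrivial, every fixed component has positive codimension and therefore at least one such factor appears. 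One clarification: your closing sentence suggests the obstruction in the non-spin case is merely that the weight shifts are integral. The sharper statement is that without a spin structure there is no $S^1$-equivariant Dirac operator at all, so no localization formula is available; $\CC P^2$ with $\widehat A=-\tfrac18$ is best read as showing that the conclusion itself fails, not as a breakdown inside the same proof.
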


Since we have given the sufficient and necessary conditions for non vanishing $\widehat{A}$-genus  about spin twisted Milnor hypersurfaces in Example \ref{3.2} and \ref{3.4}, we have
\begin{corollary}\indent
 \begin{enumerate}
 \item[$(1)$] Assume $\bI=\mathbf{0}$, $n_1+n_2$ is odd and $k_i\in \Z, k_i\leq-n_i$ or $k_i\geq0, i=1,2$. Then there does not exist non-trivial circle actions on $H_{n_1,n_2}^{\mathbf{0}}(d_1,d_2)$;
\item[$(2)$] Assume $n_2$ is odd and $j\neq 0,-2$ is even. Then there does not exist non-trivial circle action on the twisted spin Milnor hypersurface $H_{2,n_2}^{(j,0,\cdots,0)}(1,n_2+1)$.
\end{enumerate}
\end{corollary}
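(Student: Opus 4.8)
The plan is to read the two cases off the $\widehat A$-genus computations of the previous section and feed them into the Atiyah--Hirzebruch vanishing theorem stated just above, using only the slogan: a connected spin manifold with $\widehat A\neq 0$ admits no nontrivial $S^1$-action.

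For $(1)$, since $\bI=\mathbf 0$ we have $\sigma_1=0$, so $k_1=-\frac{n_1+1-d_1}{2}$ and $k_2=-\frac{n_2+1-d_2}{2}$ are exactly the integers appearing in Proposition~\ref{spincondition}; being assumed to lie in $\Z$, they equip $H_{n_1,n_2}^{\mathbf 0}(d_1,d_2)$ with the induced spin structure. As $n_1+n_2$ is odd, Example~\ref{3.2} gives
\[
\widehat A\big(H_{n_1,n_2}^{\mathbf 0}(d_1,d_2)\big)=2\binom{n_1+k_1}{n_1}\binom{n_2+k_2}{n_2},
\]
and the same example identifies its zero locus as the condition ``$-n_i<k_i<0$ for some $i$''. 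The hypothesis ``$k_i\le-n_i$ or $k_i\ge 0$ for each $i$'' is precisely the negation of that condition, so $\widehat A\neq 0$. Since the hypersurface is connected, the Atiyah--Hirzebruch theorem \cite{AH70} then shows it carries no nontrivial circle action.

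For $(2)$, we have $\bI=(j,0,\dots,0)$ with $j$ even, so the remark at the end of Example~\ref{3.4} makes $H_{2,n_2}^{(j,0,\dots,0)}(1,n_2+1)$ spin; and $n_2$ odd forces $n_1+n_2=2+n_2$ odd, so Example~\ref{3.4} applies and yields
\[
\widehat A\big(H_{2,n_2}^{(j,0,\dots,0)}(1,n_2+1)\big)=\frac{j}{2}\Big(\frac{j}{2}+1\Big).
\]
Putting $j=2t$, this is $t(t+1)$, which vanishes only for $t\in\{0,-1\}$, i.e.\ for $j\in\{0,-2\}$; hence under the hypothesis $j\neq 0,-2$ it is nonzero, and \cite{AH70} again forbids a nontrivial circle action on this connected spin manifold.

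I do not expect any real obstacle here: this is a bookkeeping corollary of Examples~\ref{3.2} and \ref{3.4} together with \cite{AH70}. The two points deserving a moment's attention are (a) matching the inequalities on the $k_i$ (resp.\ on $j$) exactly to the non-vanishing locus of the displayed product of binomial coefficients (resp.\ of $\tfrac{j}{2}(\tfrac{j}{2}+1)$), being mindful of the boundary values; and (b) recording that the hypersurfaces in question are connected --- for $(2)$, e.g., when $j\le-4$ it is a hyperplane section of $V$ (as $d_1=1,d_2=n_2+1$ are positive and $\bI$ is then negative), hence simply connected by Lefschetz --- so that the Atiyah--Hirzebruch theorem applies verbatim.
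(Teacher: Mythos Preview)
Your proposal is correct and takes exactly the paper's approach: the paper offers no argument beyond the sentence ``Since we have given the sufficient and necessary conditions for non vanishing $\widehat{A}$-genus \dots\ in Example~\ref{3.2} and~\ref{3.4}, we have,'' and you have simply fleshed this out, invoking the same two examples and the Atiyah--Hirzebruch theorem, with the added care of checking the spin hypothesis and connectedness. One caveat worth recording (you already flagged the boundary values in your point~(a)): the generalized binomial $\binom{n_i+k_i}{n_i}$ actually vanishes on the closed range $-n_i\le k_i\le -1$, so at the endpoint $k_i=-n_i$ one has $\widehat A=0$ and the Atiyah--Hirzebruch argument gives no information; this is a slip in the paper's own statement (Example~\ref{3.2} writes the strict inequality $-n_i<k_i<0$), not in your reasoning.
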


Example \ref{3.3} tells us  $\widehat{A}(H_{n_1,n_2}^{\bI}(d_1,1))=0, \forall \ \bI $. The smooth hypersurface $H_{n_1,n_2}^{\mathbf{0}}(d_1,1)$ can be described as the zero locus of equation
$$x_0^{d_1}y_0+x_1^{d_1}y_1+\cdots x_{n_1}^{d_1}y_{n_1}=0, \ (n_1\leq n_2 ) $$
where $[x_0: x_1: \cdots: x_{n_1}]$ and $[y_0: y_1: \cdots: y_{n_2}]$ are the homogeneous coordinates on $\CC P^{n_1}$, reps. $\CC P^{n_2}$.

We can define a natural circle action on $H_{n_1,n_2}^{\mathbf{0}}(d_1,1)$ 
$$\lambda\cdot [x_0: x_1: \cdots: x_{n_1}]=[x_0: \lambda x_1: \cdots: \lambda x_{n_1}] $$
$$\lambda\cdot [y_0: y_1: \cdots: y_{n_2}]=[y_0: \lambda^{-d_1} y_1: \cdots: \lambda^{-d_1} y_{n_2}]$$
where $\lambda\in S^1$.
\vskip.2cm
 We propose a natural question,
\vskip.2cm
{\bf Problem:}   Does there exist a non-trivial circle action on $H_{n_1,n_2}^{\bI}(d_1,1)$ for $\bI\neq {\bf 0}$?

\vskip.5cm

Denote $a_{i}(n)$ the coefficient in the dyadic expansion of $n\in\Z_+$:
$$n=a_{0}(n)+a_{1}(n)2^{1}+a_{2}(n)2^{2}+\cdots+a_{m}(n)2^{m}.$$

\begin{theorem}[\protect Lucas Theorem\cite{Lu}]
For any $k\in\Z$,
$$\binom{n+k}{n}\equiv\prod\limits_{i=0}^{m}\binom{a_{i}(n+k)}{a_{i}(n)}\ \mod\ 2.$$

\end{theorem}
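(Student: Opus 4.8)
The plan is to derive this from the Frobenius identity in the polynomial ring $\mathbb{F}_2[x]$. Since in characteristic $2$ one has $(1+x)^2 = 1+x^2$, iterating gives $(1+x)^{2^i} \equiv 1 + x^{2^i} \pmod{2}$ for every $i \geq 0$. Assuming first that $N := n+k \geq 0$ and writing its dyadic expansion $N = \sum_{i=0}^{m} a_i(N)\,2^i$ with $a_i(N) \in \{0,1\}$, I would multiply these identities to obtain, in $\mathbb{F}_2[x]$,
$$(1+x)^N \equiv \prod_{i=0}^{m} \bigl(1 + x^{2^i}\bigr)^{a_i(N)} \pmod{2}.$$

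Next I would compare the coefficient of $x^n$ on the two sides. On the left it is $\binom{N}{n} \bmod 2$. On the right, expanding the finite product, a monomial of degree $n$ arises by selecting from each factor with $a_i(N) = 1$ either the summand $1$ or the summand $x^{2^i}$; if $S$ denotes the set of indices for which $x^{2^i}$ is chosen, the resulting degree is $\sum_{i \in S} 2^i$. The key step is uniqueness of the binary representation of $n$: the equation $\sum_{i \in S} 2^i = n$ has the single solution $S = \{\, i : a_i(n) = 1 \,\}$, and this choice is admissible precisely when $a_i(n) \leq a_i(N)$ for all $i$, in which case the coefficient contributed is $1$, and otherwise $0$. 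Recording this digit by digit yields $\binom{N}{n} \equiv \prod_{i=0}^{m} \binom{a_i(N)}{a_i(n)} \pmod{2}$, since $\binom{1}{0}=\binom{1}{1}=\binom{0}{0}=1$ and $\binom{0}{1}=0$.

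Finally I would handle the case $n+k < 0$ by the elementary identity for the generalized binomial coefficient, $\binom{n+k}{n} = (-1)^n \binom{-k-1}{n}$ with $-k-1 \geq n-1 \geq 0$, so that modulo $2$ the statement for negative $n+k$ follows from the already-established case applied to the non-negative integer $-k-1$, the sign being invisible mod $2$. I do not anticipate any serious obstacle here: the argument is entirely formal, and the only points requiring a little care are that the product over $i$ is genuinely finite (because $a_i(N) = a_i(n) = 0$ for $i$ large), that no cross-term of the product other than the one dictated by the binary digits of $n$ can contribute to $x^n$, and the bookkeeping in the reduction for negative arguments.
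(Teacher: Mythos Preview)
The paper does not prove Lucas' Theorem: it is stated as a classical result with a citation to \cite{Lu} and used without argument. Your proposal supplies the standard proof via the Frobenius identity $(1+x)^{2^i}\equiv 1+x^{2^i}$ in $\mathbb{F}_2[x]$, and it is entirely correct for $n+k\ge 0$.

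One small remark on the case $n+k<0$: your reduction via $\binom{n+k}{n}=(-1)^n\binom{-k-1}{n}$ is correct, but it expresses the binomial coefficient through the binary digits of $-k-1$, not through digits $a_i(n+k)$ of a negative integer, which the paper never actually defines (the finite dyadic expansion with digits in $\{0,1\}$ exists only for non-negative integers). This is not a gap in your argument but an ambiguity in the statement itself; indeed, the paper's very next proposition treats the ranges $k\ge 0$ and $k<-n-1$ separately, precisely along the lines of your reduction.
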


And the following properties are based on Lucas Theorem.

\begin{proposition}[c.f. \cite{Zh96E}]
 $\binom{n+k}{n}\equiv 1\ \mod\ 2, n\in \N, k\in\Z$ iff
\begin{equation}
\begin{cases}a_{i}(n)+a_{i}(k)\leq 1,\forall\ i\geq 0, & \mathrm{ for}\ k\geq 0;\\
     a_{i}(-k-1-n)+ a_{i}(n)\leq 1, \forall\ i\geq 0, &\mathrm{ for}\ k< -n-1.
\end{cases}
\end{equation}

\end{proposition}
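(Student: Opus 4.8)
The plan is to treat the two ranges of $k$ separately and to reduce the second one to the first.

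\emph{The case $k\ge 0$.} Here I would invoke the Lucas Theorem stated above: $\binom{n+k}{n}\equiv\prod_{i\ge 0}\binom{a_i(n+k)}{a_i(n)}\pmod 2$. Since every digit $a_i(\cdot)$ lies in $\{0,1\}$, the factor $\binom{a_i(n+k)}{a_i(n)}$ equals $1$ except when $a_i(n+k)=0$ and $a_i(n)=1$, in which case it is $\binom{0}{1}=0$. Hence $\binom{n+k}{n}$ is odd if and only if $a_i(n)\le a_i(n+k)$ for every $i\ge 0$. So it remains to prove the elementary equivalence
\[
\bigl(\forall i\colon a_i(n)\le a_i(n+k)\bigr)\iff\bigl(\forall i\colon a_i(n)+a_i(k)\le 1\bigr),
\]
which says precisely that adding $n$ and $k$ in base $2$ produces no carries. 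For the reverse implication, a short induction on the bit position shows that if $a_i(n)+a_i(k)\le 1$ for all $i$ then no carry is ever generated, so $a_i(n+k)=a_i(n)+a_i(k)\ge a_i(n)$. For the forward implication one argues by contradiction: if $j$ is the least index with $a_j(n)=a_j(k)=1$, then $a_i(n)+a_i(k)\le 1$ for all $i<j$, the same carry induction shows no carry reaches position $j$, so the $j$-th binary digit of $n+k$ is $0$ while $a_j(n)=1$, contradicting $a_j(n)\le a_j(n+k)$.

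\emph{The case $k<-n-1$.} Now $k+1,k+2,\dots,k+n$ are all negative, so the definition $\binom{n+k}{n}=\frac{(k+n)\cdots(k+1)}{n!}$ yields the upper-negation identity $\binom{n+k}{n}=(-1)^n\binom{-k-1}{n}$, in which $-k-1\ge n+1$ makes the right-hand side an ordinary binomial coefficient of non-negative integers. Writing $N:=-k-1-n\ge 0$, so that $-k-1=N+n$, and using that $(-1)^n$ is a unit modulo $2$, we see that $\binom{n+k}{n}$ is odd if and only if $\binom{N+n}{n}$ is odd; by the case $k\ge 0$ applied with $N$ in place of $k$ this holds if and only if $a_i(N)+a_i(n)\le 1$ for all $i$, which is exactly the second displayed condition.

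The only genuinely substantive point is the ``no carries'' equivalence in the first case: it is short, but it is where Lucas' Theorem gets converted into the dyadic-digit statement, and everything else is bookkeeping with the definition of the generalized binomial coefficient and the upper-negation identity. In effect this step is Kummer's Theorem for the prime $2$ specialized to the situation at hand.
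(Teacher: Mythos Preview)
Your proof is correct and follows essentially the same route as the paper: Lucas' Theorem plus a no-carries argument for $k\ge 0$, and the upper-negation identity $\binom{n+k}{n}=(-1)^n\binom{-k-1}{n}$ to reduce $k<-n-1$ to the first case. The only stylistic difference is that the paper handles the equivalence $\bigl(\forall i:\ a_i(n)\le a_i(n+k)\bigr)\Leftrightarrow\bigl(\forall i:\ a_i(n)+a_i(k)\le 1\bigr)$ by invoking uniqueness of the dyadic expansion (if each $a_i(n+k)-a_i(n)\in\{0,1\}$, these differences must be the digits of $k$), whereas you spell out the carry induction and a minimal-index contradiction; your version is more explicit but the content is the same.
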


\begin{proof}
If $k\geq 0$, by Lucas Theorem,
$$\binom{n+k}{n} \equiv 1\ \ \mod\ 2 \iff \prod\limits_{i=0}^{m}\binom{a_{i}(n+k)} {a_{i}(n)}\equiv 1\ \ \mod\ 2,$$
 which equals to $$\forall \ 0\leq i\leq m,\binom{a_{i}(n+k)}{a_{i}(n)}=1,i.e.\ \forall\ 0\leq i\leq m,a_{i}(n+k)\geq a_{i}(n).$$

Since $k=(n+k)-n$,
 $$(n+k)=a_{0}(n+k)2^{0}+a_{1}(n+k)2^{1}+\cdots +a_{m}(n+k)2^{m}+\cdots,$$
 we have
 $$\sum\limits_{i=0}^{m}a_{i}(k)2^i=\sum\limits_{i=0}^{m}(a_{i}(n+k)-a_{i}(n))2^i.$$
 \indent
 In general, $a_{i}(k)\neq a_{i}(n+k)-a_{i}(n)$, but if $a_{i}(n+k)\geq a_{i}(n)$, and since $a_{i}(n+k)\leq 1$, we have $ 0\leq a_{i}(n+k)- a_{i}(n)\leq1$. And since the factorization is unique, therefore
 $$ 0\leq a_{i}(n+k)- a_{i}(n)\leq1     \iff  a_{i}(k)=a_{i}(n+k)-a_{i}(n)$$
 Thus
   $$\binom{n+k}{n}\equiv1 \ \mod\ 2 \iff \forall 0\leq i\leq k,a_{i}(k)+a_{i}(n)=a_{i}(n+k)\leq 1.$$

\indent
If $k<-n$, $$\binom{n+k}{n}=\frac{(n+k)\cdots(k+1)}{n!}=(-1)^{n}\frac{(-k-1)\cdots(-n-k)}{n!}=(-1)^{n}\binom{-k-1}n.$$

Thus $$\binom{n+k}{n}\equiv\binom{-k-1}{n}\equiv1 \ \mod\ 2 \iff \forall\ 0\leq i\leq k,  a_{i}(-k-1-n)+ a_{i}(n)\leq 1.
$$
\end{proof}

We give a useful tool to simplify our computation.

\begin{proposition}[Appendix \ref{8}]\label{16} For any nonnegative integers $m, n$, we have
\begin{equation}\label{combinotoric}
\binom{m+n}  {n}\equiv
\begin{cases}
0\ \ \ \mod\ 2, & n\cdot m\ \text{or}\ [\frac{n}{2}]\cdot[\frac{m}{2}] \text{is odd};\\
\binom{[\frac{n}{4}]+[\frac{m}{4}]} {[\frac{n}{4}]}\ \ \ \mod\ 2,&  \text{otherwise}.
\end{cases}
\end{equation}
\end{proposition}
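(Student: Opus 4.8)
The plan is to reduce everything to Lucas' theorem, which computes $\binom{m+n}{n} \bmod 2$ via the binary digits of $m$ and $n$. Write the binary expansions $n = \sum_i a_i(n) 2^i$ and $m = \sum_i a_i(m) 2^i$. Recall from the proposition preceding this one that $\binom{m+n}{n} \equiv 1 \bmod 2$ iff $a_i(n) + a_i(m) \le 1$ for all $i$ (i.e. there is no carry when adding $n$ and $m$ in binary). So the whole statement is really a statement about carries in binary addition.

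First I would dispose of the two cases where the claimed value is $0$. Suppose $n \cdot m$ is odd, i.e. $a_0(n) = a_0(m) = 1$; then $a_0(n) + a_0(m) = 2 > 1$, so there is a carry in the $0$th digit and $\binom{m+n}{n} \equiv 0$. Next suppose $[n/2] \cdot [m/2]$ is odd, i.e. $a_1(n) = a_1(m) = 1$; then there is a carry in the $1$st digit and again $\binom{m+n}{n} \equiv 0$. This handles the first branch of \eqref{combinotoric}.

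For the remaining case — $a_0(n) + a_0(m) \le 1$ and $a_1(n) + a_1(m) \le 1$, so there is no carry out of the bottom two binary digits of $n$ and $m$ — I would compare $\binom{m+n}{n} \bmod 2$ with $\binom{[n/4] + [m/4]}{[n/4]} \bmod 2$ digit by digit. Because there is no carry in the bottom two positions, the bottom two binary digits of $n+m$ are just $a_0(n)+a_0(m)$ and $a_1(n)+a_1(m)$, and the higher digits of $n+m$ coincide with the digits of $(n - a_0(n) - 2a_1(n)) + (m - a_0(m) - 2a_1(m)) = 4[n/4] + 4[m/4]$, i.e. with the digits of $[n/4]+[m/4]$ shifted up by two. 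Concretely $a_{i+2}(n) = a_i([n/4])$, $a_{i+2}(m) = a_i([m/4])$, and $a_{i+2}(n+m) = a_i([n/4]+[m/4])$ for all $i \ge 0$. Then Lucas' theorem gives
\begin{align*}
\binom{m+n}{n} &\equiv \prod_{i \ge 0} \binom{a_i(n+m)}{a_i(n)} \equiv \binom{a_0(n+m)}{a_0(n)}\binom{a_1(n+m)}{a_1(n)} \prod_{i \ge 2} \binom{a_i(n+m)}{a_i(n)} \\
&\equiv \prod_{i \ge 0} \binom{a_i([n/4]+[m/4])}{a_i([n/4])} \equiv \binom{[n/4]+[m/4]}{[n/4]} \pmod 2,
\end{align*}
where the two bottom factors equal $1$ precisely because $a_j(n) + a_j(m) \le 1$ forces $a_j(n+m) = a_j(n) + a_j(m) \ge a_j(n)$ for $j = 0,1$, so $\binom{a_j(n+m)}{a_j(n)} = 1$.

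The only genuinely delicate point is the bookkeeping in the middle step: one must argue carefully that the hypotheses "$n\cdot m$ even" and "$[n/2]\cdot[m/2]$ even" together say exactly that the bottom two binary columns of the addition $n+m$ are carry-free, and hence that the addition decouples into its bottom two bits plus the addition of $[n/4]$ and $[m/4]$ shifted up by two places. Once that digit identification $a_{i+2}(n+m) = a_i([n/4]+[m/4])$ is in hand, the conclusion is a one-line application of Lucas' theorem. I expect this digit-shift argument to be the main (if modest) obstacle; since the proof is deferred to Appendix \ref{8}, it is reasonable to present it there in full, but the sketch above is the whole idea.
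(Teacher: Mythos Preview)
Your proof is correct. Your argument is a clean, unified application of Lucas' theorem: you observe that the two ``odd'' hypotheses are exactly the conditions $a_0(n)=a_0(m)=1$ or $a_1(n)=a_1(m)=1$, which force a carry in the bottom two binary columns, and that in the complementary case the absence of carries in those columns gives the digit-shift identity $a_{i+2}(n+m)=a_i([n/4]+[m/4])$, from which Lucas immediately yields the claimed congruence.

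The paper takes a different, more computational route. It first establishes two auxiliary propositions, $\binom{4m+n}{4m}\equiv\binom{m+[n/4]}{m}\pmod 2$ and a companion result for $\binom{4m+1+n}{4m+1}$, by direct manipulation of factorials and double factorials (extracting powers of $2$ by repeatedly writing $(2k)!!=2^k k!$). The corollary is then proved by a case split on $n\bmod 4$, invoking these auxiliary results case by case; Lucas is used only for the two ``odd'' cases. Your approach avoids both the factorial bookkeeping and the four-way case analysis, and is more conceptual; the paper's approach, on the other hand, yields the intermediate propositions as standalone statements that are reused elsewhere in the appendix.
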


\bigskip

\begin{theorem}[\protect Stolz  \cite{Sto}] \label{14}
Let $M$ be a simply connected, closed manifold of dimension$\geq 5$. Then $M$ does not admit a Riemannian metric of PSC iff $M$ is a spin manifold and $\alpha (M)\neq 0$.
\end{theorem}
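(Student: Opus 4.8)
The statement is Stolz's theorem, so what I can offer is a reconstruction of his strategy, which naturally splits into an elementary obstruction direction and a hard existence direction. I would first dispatch the easy implication: a closed spin manifold $M$ carrying a metric of positive scalar curvature has $\alpha(M)=0$. This is the Lichnerowicz--Hitchin vanishing already recalled in the introduction --- the Lichnerowicz formula $D^{2}=\nabla^{*}\nabla+\frac{s}{4}$ makes the Clifford-linear Dirac operator invertible when $s>0$, and its index is exactly $\alpha(M)\in KO_{*}(\mathrm{pt})$ by Atiyah--Singer (\cite{Li}, \cite{Hit}; c.f. \cite{LM}). Consequently a simply connected closed $M$ with $\dim M\ge 5$ that admits no PSC metric must be spin with $\alpha(M)\neq 0$: were it non-spin it would carry PSC by Gromov--Lawson \cite{GL}, and were it spin with $\alpha(M)=0$ it would carry PSC by the converse argument. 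So the whole content is the converse, and proving it establishes the ``iff''.

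For the converse --- $M$ spin, simply connected, $\dim M\ge 5$, $\alpha(M)=0\ \Rightarrow\ M$ admits PSC --- I would argue in three steps. \emph{Step 1 (reduction to bordism).} By the Gromov--Lawson / Schoen--Yau surgery theorem, admitting a PSC metric is preserved under surgery of codimension $\ge 3$. Given any spin bordism $W^{n+1}$ from $M$ to a closed spin $N$, surgery in the interior makes $W$ $2$-connected; since $n+1\ge 6$ the bordism $W$ then has a handle presentation on $M\times I$ with handles only of index between $3$ and $n-2$, so (reading the presentation either way) $M$ and $N$ are connected by a finite chain of codimension-$\ge 3$ surgeries, and after surgering out $\pi_{1}(N)$ along embedded circles --- codimension $\ge 4$ --- one may take $N$ simply connected. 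Hence $M$ admits PSC iff some closed spin manifold in the class $[M]\in\Omega^{\mathrm{spin}}_{n}$ does, and it suffices to show that every class of $\ker(\alpha\colon\Omega^{\mathrm{spin}}_{*}\to KO_{*}(\mathrm{pt}))$ has a PSC representative. \emph{Step 2 (PSC on $\mathbb{HP}^{2}$-bundles).} For a smooth fiber bundle with fiber $\mathbb{HP}^{2}$, compact base, and structure group inside the isometry group $\mathrm{PSp}(3)$ of the symmetric metric, the total space admits a PSC metric: start from a connection metric and shrink the fibers, using that $\mathbb{HP}^{2}$ already has positive scalar curvature so that for small fiber size the fiberwise term dominates (B\'erard-Bergery). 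Thus every bordism class represented by such an ``$\mathbb{HP}^{2}$-bundle'' is a PSC class, hence lies in $\ker\alpha$ by Step 1. \emph{Step 3 (Stolz's kernel computation).} The crux is the reverse inclusion: $\ker\alpha$ is precisely the subgroup additively generated by total spaces of $\mathbb{HP}^{2}$-bundles. This is Stolz's theorem \cite{Sto}, obtained from the $\mathbb{HP}^{2}$-transfer $\Omega^{\mathrm{spin}}_{*}(B\mathrm{PSp}(3))\to\Omega^{\mathrm{spin}}_{*-8}$ together with the Anderson--Brown--Peterson splitting of the spin Thom spectrum $M\mathrm{Spin}$, which determines the $\alpha$-map on $\Omega^{\mathrm{spin}}_{*}$ finely enough to identify its kernel with the image of the transfer. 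Granting Step 3: if $\alpha(M)=0$ then $[M]=[E]$ for an $\mathbb{HP}^{2}$-bundle $E$; $E$ has PSC by Step 2; and then $M$ has PSC by Step 1.

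The serious obstacle is Step 3. Steps 1 and 2 are standard --- surgery theory plus a fiber-shrinking construction --- but identifying $\ker\alpha$ with the bordism classes of $\mathbb{HP}^{2}$-bundles requires the full stable-homotopy analysis of $M\mathrm{Spin}$ and its $ko$-homology Hurewicz map; in a self-contained treatment essentially all the work is concentrated there, whereas in the present paper one simply invokes \cite{Sto} and applies it to the simply connected spin twisted Milnor hypersurfaces, whose $\alpha$-invariant has already been computed in Theorem~\ref{13}.
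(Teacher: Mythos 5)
The paper does not prove this result; it simply cites Stolz \cite{Sto} as a black box and applies it to the spin twisted Milnor hypersurfaces, so there is no ``paper's own proof'' to compare against. Your reconstruction of Stolz's strategy is accurate at the level of a sketch: the Lichnerowicz--Hitchin vanishing plus Gromov--Lawson in the non-spin case give the easy direction, and the converse correctly decomposes into (i) bordism invariance of PSC via the Gromov--Lawson/Schoen--Yau codimension-$\ge 3$ surgery theorem, (ii) fiber-shrinking on $\mathbb{HP}^2$-bundles with structure group $\mathrm{PSp}(3)$, and (iii) the identification of $\ker\alpha$ with the image of the $\mathbb{HP}^2$-transfer via the Anderson--Brown--Peterson splitting of $M\mathrm{Spin}$. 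You are also right that essentially all the difficulty sits in step (iii), which your outline outsources back to \cite{Sto}; as a self-contained argument it is therefore incomplete, but as a description of the structure of Stolz's proof and of how the present paper uses it, it is correct.
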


 Since by \cite[Theorem A]{KM}, for any oriented manifold $M^{2n}$, any codimension 2 homology class is represented by a submanifold $K\subset M$, and $(M,K)$ is $n$-connected. In the following discussion, let us assume $H_{n_1,n_2}^{\bI}(d_1,d_2)$ to be simply connected.

Thus $H_{n_1,n_2}^{\bI}(d_1,d_2)$ as a spin submanifold of $V$ with dimension no less than than 5 does not admit a Riemannian metric of PSC iff $\alpha (M)\neq 0$.

\begin{corollary}
$H_{n_1,n_2}^{\bI}(1,1)$ always admits a Riemannian metric of PSC (Example \ref{3.3}).
In particular, Milnor hypersurfaces always admits a Riemannian metric of PSC (Example \ref{3.1}).
\end{corollary}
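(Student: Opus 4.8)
The plan is to show that $\alpha(H_{n_1,n_2}^{\bI}(1,1)) = 0$, and then invoke Theorem \ref{14} (Stolz). By Theorem \ref{13}, when $H_{n_1,n_2}^{\bI}(1,1)$ is spin and $n_1+n_2 \equiv 2 \bmod 4$, we have $\alpha(H_{n_1,n_2}^{\bI}(1,1)) \equiv F_{n_1,n_2,\bI}(1,1) \bmod 2$, so it suffices to prove $F_{n_1,n_2,\bI}(1,1) = 0$. Recall that the spin condition (Proposition \ref{spincondition}) with $d_1=d_2=1$ forces $k_1 = -\frac{n_1-\sigma_1}{2}$ and $k_2 = -\frac{n_2}{2}$ to be integers; in particular $n_2$ must be even.

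The key step is the vanishing computation carried out in Example \ref{3.3}. First I would write out $F_{n_1,n_2,\bI}(1,1)$ from formula (\ref{Fi}) with $d_1=d_2=1$, so that the last binomial factor becomes
$$\binom{\tfrac{n_2}{2} + \sum_{j=1}^r l_j - r}{n_2 + \sum_{j=1}^r l_j}.$$
The main point is to observe that the top entry $m := \tfrac{n_2}{2} + \sum_{j=1}^r l_j - r$ is a nonnegative integer: $\tfrac{n_2}{2}$ is an integer since $H_{n_1,n_2}^{\bI}(1,1)$ is spin, and $\sum_{j=1}^r l_j - r \geq 0$ because each $l_j \geq 1$. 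Moreover $m < n := n_2 + \sum_{j=1}^r l_j$ because $-r < \tfrac{n_2}{2}$. Hence the binomial coefficient $\binom{m}{n}$ with $0 \leq m < n$ both (ordinary) nonnegative integers vanishes, killing every summand, so $F_{n_1,n_2,\bI}(1,1) = 0$. The same argument with $d_1=d_2=-1$ (or simply Theorem \ref{Ahat}) gives $\widehat{A}(H_{n_1,n_2}^{\bI}(1,1)) = 0$ as well.

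The remaining step is geometric rather than computational: one must check the hypotheses of Stolz's theorem. We need $H_{n_1,n_2}^{\bI}(1,1)$ simply connected (assumed in the surrounding discussion, justified via \cite[Theorem A]{KM}) and of dimension $\geq 5$, i.e. $n_1+n_2 \geq 3$; small-dimensional cases can be handled or excluded separately. Given these, Theorem \ref{14} says $H_{n_1,n_2}^{\bI}(1,1)$ fails to admit a PSC metric iff it is spin with $\alpha \neq 0$; since $\alpha = 0$ whenever the manifold is spin (and non-spin simply connected manifolds of dimension $\geq 5$ always admit PSC metrics by Gromov--Lawson), the conclusion follows in all cases. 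The only mild obstacle is bookkeeping the parity/dimension conditions so that Theorem \ref{13} applies on the nose; the algebraic vanishing itself is elementary once the binomial observation is made. The special case $\bI = \mathbf{0}$ recovers the classical Milnor hypersurface statement via Example \ref{3.1}.
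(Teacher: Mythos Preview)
Your proposal is correct and follows essentially the same approach as the paper: the corollary has no separate proof in the paper, it simply cites Example \ref{3.3}, which is precisely the binomial-vanishing argument you give (showing $\binom{m}{n}=0$ because $0\leq m<n$ with $m=\tfrac{n_2}{2}+\sum l_j-r$, $n=n_2+\sum l_j$), followed by the appeal to Stolz's theorem under the standing simply-connected and dimension $\geq 5$ assumptions. Your handling of the non-spin case via Gromov--Lawson is a small bonus not spelled out in the paper.
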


\begin{corollary}
 When $k_1=-\frac{n_1+1-d_1}{2},k_2=-\frac{n_2+1-d_2}{2}$ are integers, The spin hypersurface $H_{n_1,n_2}^{\bf 0}(d_1,d_2)$ of \ $\CC P^{n_1}\times\CC P^{n_2}$ does not admit a Riemannian metric of PSC if and only if $\ \forall\ i,\ a_i(k_l)+a_i(n_l)\leq 1, l=1,2$ (Example \ref{3.2})
\end{corollary}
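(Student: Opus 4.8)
The plan is to obtain the corollary by stringing together results already established: the spin criterion of Proposition~\ref{spincondition}, Stolz's theorem (Theorem~\ref{14}), the mod~$2$ formula for $\alpha$ of Theorem~\ref{13}, the explicit value of $F_{n_1,n_2,\mathbf{0}}(d_1,d_2)$ computed in Example~\ref{3.2}, and the Lucas-type criterion for oddness of binomial coefficients recorded just before Theorem~\ref{14}. Since the argument uses the mod~$2$ index, I work in the range $n_1+n_2\equiv 2\ \mod\ 4$, so that $\dim H_{n_1,n_2}^{\mathbf{0}}(d_1,d_2)\equiv 2\ \mod\ 8$, and (as fixed in the surrounding discussion) I assume the hypersurface is simply connected of dimension $\ge 5$.

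First, because $\bI=\mathbf{0}$ forces $\sigma_1=0$, Proposition~\ref{spincondition} shows that $H_{n_1,n_2}^{\mathbf{0}}(d_1,d_2)$ carries the induced spin structure precisely when $k_1=-\frac{n_1+1-d_1}{2}$ and $k_2=-\frac{n_2+1-d_2}{2}$ are integers, which is exactly the hypothesis of the corollary. The manifold is thus spin, simply connected and of dimension $\ge 5$, so Theorem~\ref{14} reduces the statement to showing that $\alpha(H_{n_1,n_2}^{\mathbf{0}}(d_1,d_2))\neq 0$ in $KO_{8k+2}(pt)=\Z_2$ if and only if $a_i(k_l)+a_i(n_l)\le 1$ for all $i$ and $l=1,2$.

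Next I would evaluate $\alpha$ mod~$2$. By Theorem~\ref{13}, $\alpha(H_{n_1,n_2}^{\mathbf{0}}(d_1,d_2))\equiv F_{n_1,n_2,\mathbf{0}}(d_1,d_2)\ \mod\ 2$, and equation~(\ref{Ffac}) (see Example~\ref{3.2}) gives $F_{n_1,n_2,\mathbf{0}}(d_1,d_2)=\binom{n_1+k_1}{n_1}\binom{n_2+k_2}{n_2}$. Hence $\alpha\neq 0$ if and only if this integer is odd, and since a product of integers is odd exactly when both factors are odd, this holds if and only if $\binom{n_l+k_l}{n_l}$ is odd for $l=1,2$ --- in particular, when $-n_l\le k_l\le -1$ that factor vanishes and no obstruction arises. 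Finally, applying the Lucas-theorem corollary stated before Theorem~\ref{14} to each binomial factor separately turns ``$\binom{n_l+k_l}{n_l}$ odd'' into the dyadic inequality $a_i(k_l)+a_i(n_l)\le 1$ for all $i$ when $k_l\ge 0$ (and into $a_i(-k_l-1-n_l)+a_i(n_l)\le 1$ when $k_l\le -n_l-1$), which yields both implications of the equivalence.

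There is no genuinely hard step --- the corollary is an assembly of earlier results --- but two points need care: checking that the dimensional and fundamental-group hypotheses of Stolz's theorem, and the dimension condition $\dim\equiv 2\ \mod\ 8$ of Theorem~\ref{13}, really apply to the hypersurfaces in question; and keeping track of the case division by the sign of $k_l$ when translating into the dyadic condition.
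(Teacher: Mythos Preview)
Your proposal is correct and follows exactly the route the paper intends: the corollary in the paper is simply a citation of Example~\ref{3.2} together with Stolz's theorem and the Lucas-type criterion, and you have reconstructed precisely that chain of implications. If anything, you are more careful than the paper's own statement, since you explicitly record the case split on the sign of $k_l$ (the paper's condition ``$a_i(k_l)+a_i(n_l)\le1$'' is literally written only for $k_l\ge0$).
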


Motivated by Zhang's work in \cite{Zh96E}, we give the sufficient and necessary conditions for existing positive scalar curvature on $H_{n_1,n_2}^{\bI}(d_1,d_2)$, which are described in Example \ref{3.5} and  \ref{3.6}.
\begin{corollary}\label{n_1=1}
Assume $ n_1+n_2\equiv 2\ \mod\ 4$,\ $n_1=1$ and $k_1=-\frac{n_1+1-d_1-\sigma_1}{2},k_2=-\frac{n_2+1-d_2}{2}$ are integers. A spin $H_{n_1,n_2}^{\bI}(d_1,d_2)$ doest not admit a Riemannian metric of PSC if and only if one of the following conditions satisfied
\begin{itemize}
  \item $k_{2}\geq 0$, $k_{2}\equiv 0\ \mod\ 4$, $k_{1}\equiv 0\ \mod\ 2$, $\forall\ i, a_{i}([\frac{k_{2}}{4}])+a_{i}([\frac{n_{2}}{4}])\leq 1$;
  \item $k_{2}\geq 0$, $k_{2}\equiv 1\ \mod\ 4$, $\sigma_{1}\equiv 1\ \mod\ 2$ , $\forall\ i, a_{i}([\frac{k_{2}}{4}])+a_{i}([\frac{n_{2}}{4}])\leq 1$;
  \item $k_{2}\geq 0$, $k_{2}\equiv 2\ \mod\ 4$, $k_{1}+\sigma_{1}\equiv 0\ \mod\ 2$, $\forall\ i, a_{i}([\frac{k_{2}}{4}])+a_{i}([\frac{n_{2}}{4}])\leq 1$;
  \item $k_{2}\leq -n_{2}-1$, $-k_{2}\equiv 0\ \mod\ 4$, $k_{1}\equiv 0\ \mod\ 2$, $\forall\ i,a_{i}([\frac{-k_{2}-1-n_{2}}{4}])+a_{i}([\frac{n_{2} }{4}])\leq 1$;
  \item $k_{2}\leq -n_{2}-1$, $-k_{2}\equiv 2\ \mod\ 4$, $k_{1}+\sigma_{1}\equiv 0\ \mod\ 2$, $\forall\ i, a_{i}([\frac{-k_{2}-1-n_{2}}{4}])+a_{i}([\frac{n_{2} }{4}])\leq 1$;
  \item $k_{2}\leq -n_{2}-1$, $-k_{2}\equiv 3\ \mod\ 4$, $\sigma_{1}\equiv 1\ \mod\ 2,\forall\ i, a_{i}([\frac{-k_{2}-1-n_{2}}{4}])+a_{i}([\frac{n_{2} }{4}])\leq 1$.
\end{itemize}
\end{corollary}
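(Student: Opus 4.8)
The plan is to convert the positive scalar curvature (PSC) question into a single mod $2$ computation and then run the dyadic machinery of this section. Since $n_1=1$ and $n_1+n_2\equiv 2\bmod 4$ force $n_2\equiv 1\bmod 4$, and since we assume $H_{1,n_2}^{\bI}(d_1,d_2)$ spin and simply connected of real dimension $2n_2\ge 5$ (so $n_2\ge 5$), Stolz's Theorem~\ref{14} tells us that $H_{1,n_2}^{\bI}(d_1,d_2)$ fails to admit a PSC metric exactly when $\alpha(H_{1,n_2}^{\bI}(d_1,d_2))\neq 0$ in $KO_{8k+2}(pt)=\Z_2$. By Theorem~\ref{13} together with the explicit formula of Example~\ref{3.5}, this is equivalent to
$$(k_1+1)\binom{n_2+k_2}{n_2}+\sigma_1\binom{n_2+k_2}{n_2+1}\equiv 1\ \bmod\ 2 ,$$
so everything reduces to deciding, in terms of $k_1,\sigma_1,k_2,n_2$, when this sum is odd.

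Next I would compute the two generalized binomial coefficients mod $2$. For $k_2$ in the middle range $-n_2\le k_2\le -1$ both are ordinary binomials with upper entry in $[0,n_2-1]$, hence vanish; then $\alpha=0$ and a PSC metric exists, which is why no such $k_2$ appears in the list. For $k_2\le -n_2-1$ I would first apply the reflection identity $\binom{m+k}{m}=(-1)^m\binom{-k-1}{m}$, once to $\binom{n_2+k_2}{n_2}$ and once to $\binom{n_2+k_2}{n_2+1}=\binom{(n_2+1)+(k_2-1)}{n_2+1}$; because $n_2$ is odd and $n_2+1$ is even, the resulting signs disappear mod $2$, and in both cases one is left with a coefficient of the form $\binom{\,\cdot\,+m}{m}$ whose relevant nonnegative parameter is $m=-k_2-1-n_2$. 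Feeding each coefficient into Proposition~\ref{16} and the dyadic oddness criterion preceding it, the decisive arithmetic input is $n_2\equiv 1\bmod 4$: it makes $n_2$ odd, $[n_2/2]$ even, $n_2+1\equiv 2\bmod 4$ (so $[(n_2+1)/2]$ is odd and $[(n_2+1)/4]=[n_2/4]$), and $[(k_2-1)/4]=[k_2/4]$ for $k_2\equiv 1,2\bmod 4$. Proposition~\ref{16} then yields: $\binom{n_2+k_2}{n_2}$ is $0\bmod 2$ unless $k_2$ is even (resp. $-k_2-1-n_2$ is even), in which case it equals $\binom{[n_2/4]+[k_2/4]}{[n_2/4]}$ (resp. the analogue with $[\,(-k_2-1-n_2)/4\,]$); and $\binom{n_2+k_2}{n_2+1}$ is $0\bmod 2$ unless $k_2\equiv 1,2\bmod 4$ (resp. $-k_2\equiv 2,3\bmod 4$), in which case it equals the \emph{same} expression $\binom{[n_2/4]+[k_2/4]}{[n_2/4]}$ (resp. with $[\,(-k_2-1-n_2)/4\,]$).

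Finally I would assemble the case list by running over $k_2\bmod 4$ for $k_2\ge 0$ and $-k_2\bmod 4$ for $k_2\le -n_2-1$. In each residue class at most one of the two binomials is odd, except when $k_2\equiv 2\bmod 4$ (resp. $-k_2\equiv 2\bmod 4$), where both are odd and equal mod $2$; this coincidence is exactly what collapses the characterization to a single divisibility condition $\binom{[n_2/4]+[k_2/4]}{[n_2/4]}\equiv 1$ — rewritten by the dyadic criterion as $a_i([k_2/4])+a_i([n_2/4])\le 1$ for all $i$, or the analogue with $[\,(-k_2-1-n_2)/4\,]$ — together with one parity condition on the coefficient in front of the surviving binomial: $k_1\equiv 0\bmod 2$ when only the $\binom{n_2+k_2}{n_2}$ term survives, $\sigma_1\equiv 1\bmod 2$ when only the $\binom{n_2+k_2}{n_2+1}$ term survives, and $k_1+\sigma_1\equiv 0\bmod 2$ when both survive. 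Matching these against the residues $k_2\equiv 0,1,2,3\bmod 4$ and $-k_2\equiv 0,1,2,3\bmod 4$ produces precisely the six listed conditions, the residues $k_2\equiv 3\bmod 4$ and $-k_2\equiv 1\bmod 4$ giving $\alpha\equiv 0$ and hence contributing nothing. The only genuine work is the bookkeeping in this last step — keeping straight which quarter-floor appears and verifying the claimed coincidence of the two binomials mod $2$ in the ``$\equiv 2\bmod 4$'' cases; once Proposition~\ref{16} and Example~\ref{3.5} are available there is no further conceptual obstacle.
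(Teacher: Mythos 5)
Your proposal is correct and follows essentially the same route as the paper: reduce to the explicit $\alpha$-formula of Example~\ref{3.5}, split by $k_2 \bmod 4$ (and the three ranges $k_2\ge 0$, $-n_2\le k_2<0$, $k_2\le -n_2-1$), and use the Lucas/dyadic machinery of Proposition~\ref{16} to reduce each $\binom{n_2+k_2}{n_2}$ and $\binom{n_2+k_2}{n_2+1}$ to $\binom{m+[\cdot/4]}{m}$ or $0$. The only stylistic difference is that you invoke Proposition~\ref{16} uniformly and spell out the reflection identity for $k_2\le -n_2-1$, whereas the paper works $k_2\ge 0$ by hand via Propositions~\ref{11} and~\ref{18} and dismisses the negative range as ``similar.''
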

\begin{proof}
In the following we assume $n_{2}=4m+1$ for some $m\in\Z$. And since $H_{n_1,n_2}^{\bI}(d_1,d_2)$ is spin, $k_1,k_2\in\Z.$

And
$$\alpha(H_{n_1,n_2}^{\bI}(d_1,d_2))\equiv \binom{k_1+1}{1}\binom{n_2+k_2}{n_2}+\sigma_1\binom{n_2+k_2}{n_2+1}\ \mod\ 2.$$

\begin{description}
  \item[1] If $-n_{2}\leq k_{2}<0$, $\alpha =0$. Thus $H_{n_1,n_2}^{\bI}(d_1,d_2)$ admits a Riemannian metric of PSC.
\vskip.2cm
  \item[2] If $k_{2}\geq 0$, $\alpha(H_{n_1,n_2}^{\bI}(d_1,d_2))\equiv (k_{1}+1)\binom{n_2+k_2}{n_2}+\sigma_1\binom{n_2+k_2}{n_2+1}\ \mod\ 2$.

\begin{enumerate}
  \item If $k_{2}\equiv 0\ \mod\ 4$, assume $k_{2}=4k$, for some $k\in\Z$. Then\\

  $\binom{n_{2}+k_{2}} {n_{2}}=\binom{4m+1+4k} {4m+1}\equiv\binom{m+k} {k}\ \mod\ 2,$\\

  $ \binom{n_{2}+k_{2}} {n_{2}+1}=\binom{4m+1+4k} {4m+2}= \binom{(4m+2)+(4(k-1)+3)} {4m+2}\equiv 0\ \mod\ 2.$\\

Thus
$$\alpha(H_{n_1,n_2}^{\bI}(d_1,d_2))\equiv(k_{1}+1)\binom{m+k} {k}\ \mod\ 2.
$$
So $H_{n_1,n_2}^{\bI}(d_1,d_2)$  does not admit a Riemannian metric of PSC iff

$$
k_{1}\equiv 0\ \mod\ 2, \forall\ i, a_{i}([\frac{k_{2}}{4}])+a_{i}([\frac{n_{2}}{4}])\leq 1.
$$

  \item If $k_{2}\equiv 1\ \mod\ 4$, assume $k_{2}=4k+1$, for some $k\in\Z$.
  Thus
  $$\alpha(H_{n_1,n_2}^{\bI}(d_1,d_2))\equiv \sigma_{1}\binom{m+k} {k}\ \mod\ 2.$$
  So $H_{n_1,n_2}^{\bI}(d_1,d_2)$  does not admit a Riemannian metric of PSC iff
  $$
 \sigma_{1}\equiv 1\ \mod\ 2, \forall i, a_{i}([\frac{k_{2}}{4}])+a_{i}([\frac{n_{2}}{4}])\leq 1.
$$

 \item If $k_{2}\equiv 2 \ \mod\ 4$, assume $k_{2}=4k+2$, for some $k\in\Z$.
  Thus
  $$\alpha(H_{n_1,n_2}^{\bI}(d_1,d_2))\equiv (k_{1}+1+\sigma_{1})\binom{m+k} {k}\ \mod\ 2.$$
  So $H_{n_1,n_2}^{\bI}(d_1,d_2)$  does not admit a Riemannian metric of PSC iff
  $$
 (k_{1}+\sigma_{1})\equiv 0\ \mod\ 2, \forall i, a_{i}([\frac{k_{2}}{4}])+a_{i}([\frac{n_{2}}{4}])\leq 1.
$$

\vskip.2cm
\item If $k_{2}\equiv 3 \ \mod\ 4$, assume $k_{2}=4k+3$, for some $k\in\Z$.
  Thus
  $$\alpha(H_{n_1,n_2}^{\bI}(d_1,d_2))\equiv 0\ \mod\ 2.$$
  So $H_{n_1,n_2}^{\bI}(d_1,d_2)$ always admits a Riemannian metric of PSC.
\end{enumerate}

\vskip.3cm
  \item[3] If $k_{2}\leq -n_{2}-1$, the analysis is similar to $k_{2}\geq 0$.
\end{description}

\end{proof}



\begin{corollary} \label{n_1=2}
Assume $n_1+n_2\equiv 2\ \mod\ 4$,\ $n_1=2$ and $k_1=-\frac{n_1+1-d_1-\sigma_1}{2},k_2=-\frac{n_2+1-d_2}{2}$ are integers. A spin $H_{n_1,n_2}^{\bI}(d_1,d_2)$ doest not admit a Riemannian metric of PSC if and only if one of the following condition satisfied
\begin{itemize}
  \item $k_{2}\geq 0$, $k_{2}\equiv 0\ \mod\ 4$, $k_{1}\equiv 0\ or\ 1\ \mod\ 4$, $ \forall\ i, a_{i}([\frac{k_{2} }{4}])+a_{i}(\frac{n_{2}}{4})\leq 1$;
  \item $k_{2}\geq 0$, $k_{2}\equiv 1\ \mod\ 4$, $ \binom{k_1+2}{2}  +\frac{\sigma_1^2-2\sigma_2}{2}+\frac{(2k_1+3)\sigma_1}{2}  \equiv 1\  \mod\ 2,\forall\ i, a_{i}([\frac{k_{2} }{4}])+a_{i}(\frac{n_{2}}{4})\leq 1;$
  \item  $k_{2}\geq 0$, $k_{2}\equiv 2\ \mod\ 4$, $  \binom{k_1+2}{2} +\sigma_1^2-\sigma_2 \equiv 1\  \mod\ 2,\forall\ i, a_{i}([\frac{k_{2} }{4}])+a_{i}(\frac{n_{2}}{4})\leq 1;$
  \item $ k_{2}\geq 0$, $k_{2}\equiv 3\ \mod\ 4$, $ \binom{ k_{1}+2} {2}+\frac{\sigma_{1}(2k_{1}+3-\sigma_{1})}{2} \equiv 1\  \mod\ 2,\forall\ i, a_{i}([\frac{k_{2} }{4}])+a_{i}(\frac{n_{2}}{4})\leq 1;$
   \item $k_{2}= -n_{2}-1$, $ \frac{(k_{1}+1)(k_{1}+2)}{2}+\frac{\sigma_{1}(\sigma_{1}-2k_{1}-3)}{2} \equiv 1\  \mod\ 2;$

  \item $k_{2}\leq -n_{2}-2$, $-k_{2}\equiv 0\ \mod\ 4$, $ k_{1}\equiv 0\ or\ 1\ \mod\ 4,\forall\ i, a_{i}([\frac{-k_{2}-1-n_{2}}{4}])+a_{i}(\frac{n_{2}}{4})\leq 1;$
  \item $k_{2}\leq -n_{2}-2$, $-k_{2}\equiv 1\ \mod\ 4$, $ \binom{k_1+2}{2}+\frac{\sigma_{1}(\sigma_{1}-2k_{1}-3)}{2} \equiv  1\ \mod\ 2,\forall\ i, a_{i}([\frac{-k_{2}-1-n_{2}}{4}])+a_{i}(\frac{n_{2}}{4})\leq 1;
 $
  \item  $k_{2}\leq -n_{2}-2$, $-k_{2}\equiv 2\ \mod\ 4$, $ \binom{k_1+2}{2}+\sigma_1^2-\sigma_2 \equiv  1\ \mod\ 2,\forall \ i,a_{i}([\frac{-k_{2}-1-n_{2}}{4}])+a_{i}(\frac{n_{2}}{4})\leq 1;$
  \item$k_{2}\leq -n_{2}-2$, $-k_{2}\equiv 3\ \mod\ 4$, $ \binom{k_1+2}{2}+\frac{(2k_1+3+3\sigma_1)\sigma_1}{2}-\sigma_2 \equiv  1\ \mod\ 2,\forall\ i, a_{i}([\frac{-k_{2}-1-n_{2}}{4}])+a_{i}(\frac{n_{2}}{4})\leq 1;$
\end{itemize}
\end{corollary}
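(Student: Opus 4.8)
The plan is to run the same machine that proves Corollary \ref{n_1=1}, now with the $n_1=2$ input. First I would invoke Theorem \ref{14} (Stolz): a simply connected closed spin manifold of dimension $\geq 5$ fails to carry a PSC metric if and only if $\alpha\neq 0$; since $\dim H_{2,n_2}^{\bI}(d_1,d_2)=2(n_2+2)\equiv 2\bmod 8$, this $\alpha$ is the $\mathbb Z_2$-valued mod-$2$ index, and Theorem \ref{13} together with the explicit evaluation in Example \ref{3.6} gives
\[
\alpha \equiv \binom{k_1+2}{2}\binom{n_2+k_2}{n_2} + \Big(\!-\tfrac{\sigma_1^2-2\sigma_2}{2}+\tfrac{(2k_1+3)\sigma_1}{2}\Big)\binom{n_2+k_2}{n_2+1} + (\sigma_1^2-2\sigma_2)\binom{n_2+k_2+1}{n_2+2} + \sigma_2\binom{n_2+k_2}{n_2+2}\ \bmod 2 .
\]
Note $n_1+n_2\equiv 2\bmod 4$ forces $n_2\equiv 0\bmod 4$; write $n_2=4m$, so $[\tfrac{n_2}{2}]=2m$ is even, $[\tfrac{n_2+2}{2}]=2m+1$ is odd, and $[\tfrac{n_2}{4}]=[\tfrac{n_2+1}{4}]=[\tfrac{n_2+2}{4}]=m$. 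The whole problem is thus to compute the parity of the right-hand side as a function of $k_1,k_2,\sigma_1,\sigma_2$.

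Next I would split on $k_2$. For $-n_2\leq k_2\leq -1$ all four generalized binomial coefficients vanish, hence $\alpha=0$ and PSC exists; this accounts for the fact that no bullet covers that range. For $k_2=-n_2-1$ the arguments $n_2+k_2,n_2+k_2+1$ become $-1,0$, so each binomial is $0$ or $(-1)^{n_2}=\pm 1$ and no dyadic constraint survives; collecting the surviving terms and using that $\sigma_1(2k_1+3-\sigma_1)$ is always even gives exactly the fifth bullet. The remaining two regimes are $k_2\geq 0$ and $k_2\leq -n_2-2$; the latter reduces to the former by $\binom{n+k}{n}=(-1)^n\binom{-k-1}{n}$ (so $-k_2-1-n_2$ plays the role of $k_2$), which is why the two halves of the list are parallel. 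In each of these regimes I would further split on $k_2\bmod 4$ and apply Proposition \ref{16} to each of the four binomials: the parity of the floor quantities $[\tfrac{k_2-a}{2}]$ ($a=0,1,2$) determines which summands die modulo $2$, and each survivor is rewritten as $\binom{m+[k_2/4]}{m}$ up to the relevant shift, whose parity is controlled by Lucas' theorem, i.e. by the condition $a_i([\tfrac{k_2}{4}])+a_i([\tfrac{n_2}{4}])\leq 1$ for all $i$ (respectively with $[\tfrac{-k_2-1-n_2}{4}]$ in place of $[\tfrac{k_2}{4}]$).

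After the binomials are cleared, $\alpha$ modulo $2$ becomes $\big(\text{one common binomial factor}\big)\cdot P(k_1,\sigma_1,\sigma_2)$ for an explicit integer polynomial $P$ depending on the case; reducing $P$ modulo $2$ — using $\binom{k_1+2}{2}\equiv 1\iff k_1\equiv 0,1\bmod 4$, $\sigma_1^2-2\sigma_2\equiv\sigma_1\bmod 2$, and the integrality and parity of the half-integer-looking terms — produces precisely the congruence conditions in the nine bullets, and "$\alpha\neq 0$" translates to "$P$ odd and the surviving binomial odd". The conceptual input is minimal; I expect the main obstacle to be purely organizational: correctly tracking, across all ten cases, how the three row shifts $n_2,n_2+1,n_2+2$ and column shifts $k_2,k_2-1,k_2-2$ interact with the floor functions in Proposition \ref{16} to decide survival, and then simplifying the resulting polynomial in $k_1,\sigma_1,\sigma_2$ modulo $2$ without sign errors from $(-1)^{n_2}$ or from the ``$/2$'' coefficients. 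Getting that bookkeeping exactly right — rather than any single hard step — is where the proof lives.
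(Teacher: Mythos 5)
Your proposal follows the same route as the paper's proof of Corollary~\ref{n_1=2}: invoke Stolz (Theorem~\ref{14}) plus the $\alpha$-formula from Theorem~\ref{13}/Example~\ref{3.6}, set $n_2=4m$, split into the ranges $k_2\geq 0$, $-n_2\leq k_2<0$, $k_2=-n_2-1$, $k_2\leq -n_2-2$, then further split on $k_2\bmod 4$ and reduce each generalized binomial mod $2$ via the appendix propositions and Lucas' theorem. The handling of the vanishing middle range, the $k_2=-n_2-1$ boundary, and the reflection $\binom{n+k}{n}=(-1)^n\binom{-k-1}{n}$ for $k_2\leq -n_2-2$ all match the paper's argument, so this is essentially the paper's proof.
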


\begin{proof}

Assume $n_{2}=4m$, for some $m\in\Z$. The following discussion is similar to Corollary  \ref{n_1=1}.

\begin{description}
  \item[1] For $k_{2}\geq 0$:\\
\begin{align*}
\alpha(H_{n_1,n_2}^{\bI}(d_1,d_2))\equiv& \binom{k_1+2}{2}\binom{4m+k_2}{4m}+(-\frac{\sigma_1^2-2\sigma_2}{2}+\frac{(2k_1+3)\sigma_1}{2})\binom{4m+k_2}{4m+1}\\
&+(\sigma_1^2-2\sigma_2)\binom{4m+k_2+1}{4m+2}+\sigma_2\binom{4m+k_2}{4m+2}\ \mod\ 2
\end{align*}

   \begin{enumerate}
  \item If $k_{2}\equiv 0\ \mod\ 4$, assume $k_{2}=4k$, then
 \begin{small}
\begin{align*}
 \alpha(H_{n_1,n_2}^{\bI}(d_1,d_2))\equiv & \binom{k_1+2}{2}\binom{4m+4k}{4m}+(-\frac{\sigma_1^2-2\sigma_2}{2}+\frac{(2k_1+3)\sigma_1}{2})\binom{4m+4k}{4m+1}\\
&+(\sigma_1^2-2\sigma_2)\binom{4m+4k+1}{4m+2}+\sigma_2\binom{4m+4k}{4m+2}\ \mod\ 2\\
\equiv& \binom{k_1+2}{2}\binom{m+ k}{ m}+(-\frac{\sigma_1^2-2\sigma_2}{2}+\frac{(2k_1+3)\sigma_1}{2})\binom{4m+1+4k-1}{4m+1}\\
&+(\sigma_1^2-2\sigma_2)\binom{4m+2+4(k-1)+3}{4m+2}+\sigma_2\binom{4m+2+4k-2}{4m+2}\ \mod\ 2\\
\equiv& \binom{k_1+2}{2}\binom{m+ k}{ m} \mod\ 2.
\end{align*}\end{small}
And $\binom{2+k_{1}}{2}\equiv 1\ \mod\ 2$   equal to $ a_1(k_{1})=0$, more precisely, $k_{1}\equiv 0\ or\ 1\ \mod\ 4$.
\vskip.2cm
So $H_{n_1,n_2}^{\bI}(d_1,d_2)$ does not admit a Riemannian metric of PSC iff
  $$ k_{1}\equiv 0\ or\ 1\ \mod\ 4,\forall\ i, a_{i}([\frac{k_{2} }{4}])+a_{i}(\frac{n_{2}}{4})\leq 1.$$

 \item If $k_{2}\equiv 1\ \mod\ 4$, assume $k_{2}=4k+1$, then\\

 $\alpha(H_{n_1,n_2}^{\bI}(d_1,d_2))\equiv \{\binom{k_1+2}{2}  +\frac{\sigma_1^2-2\sigma_2}{2}+\frac{(2k_1+3)\sigma_1}{2}  \}\binom{m+k}{k}\ \mod\ 2.
$\\

 So $H_{n_1,n_2}^{\bI}(d_1,d_2)$  does not admit a Riemannian metric of PSC iff
 $$\binom{k_1+2}{2}  +\frac{\sigma_1^2-2\sigma_2}{2}+\frac{(2k_1+3)\sigma_1}{2} \equiv 1\  \mod\ 2,\forall i, a_{i}([\frac{k_{2} }{4}])+a_{i}(\frac{n_{2}}{4})\leq 1.$$

  \item If $k_{2}\equiv 2\ \mod\ 4$, assume $k_{2}=4k+2$, then\\

 $\alpha(H_{n_1,n_2}^{\bI}(d_1,d_2))\equiv(\binom{k_1+2}{2} +\sigma_1^2-\sigma_2)\binom{ m+ k }{ m }\ \mod\ 2.$\\

  \item If $k_{2}\equiv 3\ \mod\ 4$, assume $k_{2}=4k+3$, then\\

 $\alpha(H_{n_1,n_2}^{\bI}(d_1,d_2))\equiv (\binom{k_{1}+2} {2}+\frac{\sigma_{1}(2k_{1}+3-\sigma_{1})}{2})\binom{m+k}{m}\ \mod\ 2.$\\

\end{enumerate}

  \item[2]
If $-n_{2}\leq k_{2}< 0$,  $H_{n_1,n_2}^{\bI}(d_1,d_2)$ always admits a Riemannian metric of PSC.

\vskip.2cm
  \item[3]
If $k_{2}=-n_{2}-1$,
then

$\alpha(H_{n_1,n_2}^{\bI}(d_1,d_2))\equiv  \frac{(k_{1}+1)(k_{1}+2)}{2}+\frac{\sigma_{1}(\sigma_{1}-2k_{1}-3)}{2}\ \mod\ 2.
$\\

So $H_{n_1,n_2}^{\bI}(d_1,d_2)$  does not admit a Riemannian metric of PSC iff
$$\frac{(k_{1}+1)(k_{1}+2)}{2}+\frac{\sigma_{1}(\sigma_{1}-2k_{1}-3)}{2}\equiv 1\  \mod\ 2.$$

  \item[4]
If $k_{2}\leq -n_{2}-2$, the analysis  is similar to $k_{2}\geq 0$.
\end{description}

\end{proof}


\appendix

\section{Relations between $A(n,l)$ and some classical numbers}
The numbers
\begin{equation*}
A(n,l)=
\begin{cases}
\frac{1}{n!}\sum\limits_{m=0}^{l}(-1)^{l-m}\binom{l }{m}m^{n }, &  0\leq l\leq n;\\
0,&  \text{otherwise.}
\end{cases}
\end{equation*}
appear in Section \ref{A} has a generating function
  $$e^{y(e^{x}-1)}=\sum_{l,n\geq 0}A(n,l)x^{n}\frac{y^{l}}{l!}$$
and $A(n,l)$ can be obtained recursively by $ A(n,l)=\frac{l}{n}(A(n-1,l)+A(n-1,l-1))$ and $A(0,0)=1$.
\vskip.2cm
\begin{lemma}\label{4}\label{3}
$A(n,l)$ has the following relations with the Stirling number, Bell number, Bernoulli number (c.f. \cite{CG}), and divided difference (c.f. \cite{BF}).
\begin{enumerate}
\item [$(1)$] Let $S(n,l)$ be the Stirling number of the second kind which is generated by
   $$e^{y(e^x-1)}=\sum_{l,n\geq 0}S(n,l)y^l\frac{x^n}{n!}, $$
   we have
$$S(n,l)=\frac{n!}{l!}A( n,l)$$
and  $$\sum_{n=l}^{\infty}A(n,l)u^n=(e^{u}-1)^l.$$

\item [$(2)$] Let $B_n$ be  the  Bell number, we have
$$B_n=\sum_{l=0}^{n}\frac{n!}{l!}A(n,l).$$

\item [$(3)$] Let $B_{n}(0)$ be the Bernoulli number  which is generated by
 $$\frac{x}{e^{x}-1}=\sum_{n=0}^{\infty}B_n(0)\frac{x^n}{n!},$$ we have
$$B_{n}(0)=\sum_{l=0}^{n}\frac{n!}{l+1}A( n,l).$$

\item [$(4)$] Let $P_{n}[0,1,2,\cdots,l]$ be the $l$-th divided difference for $P_{n}(x)=x^n$, we have
$$P_{n }[0,1,2,\cdots,l] =\frac{n!}{l!}A(n,l).$$
\end{enumerate}
\end{lemma}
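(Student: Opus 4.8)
The plan is to reduce all four items to the single generating-function identity
$$\sum_{n\ge l}A(n,l)\,u^{n}=(e^{u}-1)^{l},$$
which is the common engine: once it is in place, (1)--(4) are essentially dictionary translations of the classical descriptions of the numbers in question.

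First I would prove the engine by the binomial theorem alone: write $(e^{u}-1)^{l}=\sum_{m=0}^{l}(-1)^{l-m}\binom{l}{m}e^{mu}$, substitute $e^{mu}=\sum_{n\ge0}m^{n}u^{n}/n!$, and interchange the (finite inner) sum to get $(e^{u}-1)^{l}=\sum_{n\ge0}\bigl(\tfrac1{n!}\sum_{m=0}^{l}(-1)^{l-m}\binom{l}{m}m^{n}\bigr)u^{n}$. For $0\le l\le n$ the parenthesis is exactly $A(n,l)$ by definition; since the left side vanishes to order $l$ at $u=0$, the coefficients with $n<l$ vanish too, matching the convention $A(n,l)=0$ there. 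Differentiating this identity and using $e^{u}=(e^{u}-1)+1$ also recovers the recursion $nA(n,l)=l\bigl(A(n-1,l)+A(n-1,l-1)\bigr)$, a useful sanity check.

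For (1): comparing the two bivariate generating functions displayed in the statement, $e^{y(e^{x}-1)}=\sum_{l,n}S(n,l)y^{l}x^{n}/n!=\sum_{l,n}A(n,l)x^{n}y^{l}/l!$, and equating coefficients of $x^{n}y^{l}$ gives $S(n,l)=\tfrac{n!}{l!}A(n,l)$ at once, while the engine identity is just $\sum_{n\ge l}S(n,l)u^{n}/n!=(e^{u}-1)^{l}/l!$ rewritten. For (2): use $B_{n}=\sum_{l=0}^{n}S(n,l)$ (equivalently, set $y=1$ and compare with $e^{e^{x}-1}=\sum_{n}B_{n}x^{n}/n!$) and insert (1). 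For (4): the $l$-th divided difference of $P_{n}(x)=x^{n}$ at the equally spaced nodes $0,1,\dots,l$ equals $\tfrac1{l!}$ times the $l$-th forward finite difference, i.e. $P_{n}[0,1,\dots,l]=\tfrac1{l!}\sum_{m=0}^{l}(-1)^{l-m}\binom{l}{m}m^{n}=\tfrac{n!}{l!}A(n,l)$, which is just $A(n,l)$ multiplied out.

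Item (3) is the only one with a genuine (if still mild) computational step, and I expect it to be the main obstacle. Here I would use the formal power-series expansion $\dfrac{\ln(1+y)}{y}=\sum_{l\ge0}\dfrac{(-1)^{l}}{l+1}y^{l}$ with $y=e^{x}-1$, which rewrites $\dfrac{x}{e^{x}-1}=\dfrac{\ln\bigl(1+(e^{x}-1)\bigr)}{e^{x}-1}$ as $\sum_{l\ge0}\dfrac{(-1)^{l}}{l+1}(e^{x}-1)^{l}$; expanding each $(e^{x}-1)^{l}$ by the engine identity and comparing with $\sum_{n}B_{n}(0)x^{n}/n!$ extracts the coefficient of $x^{n}$ and exhibits $B_{n}(0)$ as the asserted $A(n,l)$-weighted sum. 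The one point requiring care is the legitimacy of substituting $y=e^{x}-1$ inside the logarithmic series at the level of formal power series (valid because $e^{x}-1$ has zero constant term), together with careful bookkeeping of the alternating signs produced by that substitution; everything else in the lemma is routine once the engine identity is established.
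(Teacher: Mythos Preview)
Your proposal is correct and largely parallels the paper's proof: both establish the engine identity $\sum_{n\ge l}A(n,l)u^{n}=(e^{u}-1)^{l}$ by the same binomial expansion, and parts (2) and (3) are handled identically (the paper also writes $\frac{x}{e^{x}-1}=\sum_{l\ge0}\frac{(-1)^{l}}{l+1}(e^{x}-1)^{l}$ for (3), so your remark about bookkeeping the alternating sign is apt). The differences lie in (1) and (4). For (1), the paper first proves $S(n,l)=\frac{n!}{l!}A(n,l)$ by matching recurrences, checking that $\frac{l!}{n!}S(n,l)$ satisfies the same recursion $A(n,l)=\frac{l}{n}\bigl(A(n-1,l)+A(n-1,l-1)\bigr)$, and only then derives the engine; you reverse the order and read off the relation by comparing the two bivariate expansions of $e^{y(e^{x}-1)}$, which is tidier. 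For (4), the paper runs a two-claim induction on $n$ using the recursive definition of divided differences, whereas you invoke the standard fact that the $l$-th divided difference at the equally spaced nodes $0,1,\dots,l$ equals $\frac{1}{l!}$ times the $l$-th forward difference, which gives the formula in one line. Your route is more economical and unified around the engine identity; the paper's is more self-contained in that it does not presuppose the forward-difference formula for divided differences.
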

\begin{proof}

$(1)$:
 Stirling number of second kind is of recurrence relation:$$S(n,l)=lS(n-1,l)+S(n-1,l-1).$$
  Thus $$\frac{l!}{n!}S(n,l)=\frac{l}{n}\{\frac{l!}{(n-1)!}S(n-1,l)+\frac{(l-1)!}{(n-1)!}S(n-1,l-1)\}.$$
  Since $A(n,l)$ is of the same recurrence with $\frac{l!}{n!}S(n,l)$, thus $$A(n,l)=\frac{l!}{n!}S(n,l).$$

  And by definition, $$A(n,l)=\frac{l!}{n!}S(n,l)=\frac{l!}{n!}\frac{1}{l!}\sum_{m=0}^{l}(-1)^m\binom{l}{m}(l-m)^{n}=\frac{1}{n!}\sum_{m=0}^{l}(-1)^{l-m}\binom{l }{m}m^{n }.$$

So
\begin{align*}
&\sum\limits_{n=l}^{\infty}A(n,l)u^n\\
=&\sum\limits_{n=l}^{\infty}\frac{1}{n!}\sum\limits_{m=0}^{l}(-1)^{l-m}\binom{l }{m}m^{n }u^n\\
=&\sum\limits_{m=0}^{l}(-1)^{l-m}\binom{l}{m}\sum\limits_{n=l}^{\infty} \frac{(mu)^{n}}{n!}\\
=&\sum\limits_{m=0}^{l}(-1)^{l-m}\binom{l}{m}(e^{mu}-\sum\limits_{n=0}^{l-1} \frac{(mu)^{n}}{n!})\\
\end{align*}
\begin{align*}
=&\sum\limits_{m=0}^{l}(-1)^{l-m}\binom{l}{m}(e^{u})^m-\sum_{m=0}^{l}(-1)^{l-m}\binom{l}{m}\sum\limits_{n=0}^{l-1} \frac{(mu)^{n}}{n!}\\
=&(e^u-1)^l-\sum\limits_{n=0}^{l-1}\frac{u^n}{n!}\sum\limits_{m=0}^{l}(-1)^{l-m}\binom{l}{m}m^n\\
=&(e^u-1)^l-\sum\limits_{n=0}^{l-1}\frac{u^n}{n!}S(n,l)\\
=&(e^{ u}-1)^l.
\end{align*}

Note that $S(n,l)=0$ for $n<l$.\\

$(2)$:  a simple corollary of (1),$$B_n=\sum_{l=0}^{n}S(n,l)=\sum_{l=0}^{n}\frac{n!}{l!}A(n,l).$$

$(3)$:
$$\sum_{l,n\geq 0}\frac{(-1)^l}{l+1}A(n,l)x^n=\sum_{l=0}^{\infty}\frac{(-1)^l}{l+1}\sum_{n=l}^{\infty}A(n,l)x^n=\sum_{l=0}^{\infty}\frac{(-1)^l}{l+1}(e^x-1)^l=\frac{x}{e^x-1}.$$

$(4)$:
The divided difference of $P_n(x)=x^n$ is
$$P_{n}[x_{0},x_{1},\cdots,x_{l}]=\frac{1}{l!}\sum_{m=0}^{l}(-1)^{n} \binom{l}{m}x_{l-m}^n .$$
And
\begin{equation*}
P_{n}[x_{0},x_{1},\cdots,x_{l}]=
\begin{cases}
0 & l>n\\
1&  l=n\\
\sum\limits_{0\leq t_1\leq t_2\leq \cdots\leq t_{n-l}\leq l} x_{t_1}\cdots x_{t_{n-l}}& l<n
\end{cases}
\end{equation*}

By induction:
\begin{itemize}
  \item Claim 1: $A(l,l)=\frac{l!}{l!}P_{l }[0,1,2,\cdots,l]=1$ is true.
\vskip.2cm
  By the recurrence, $A(1,1)=1=\frac{1!}{1!}P_1[0,1]$ is true and $A(n,l)=0$ for $l>n\ \text{or}\ l<0$. Assume $A(l,l)=1$ is true for any integer less than $n$.\\
  \indent

  $A(n,n)=\frac{n}{n}(A( n-1,n)+A(n-1,n-1))=A(n-1,n-1)=\cdots=A(1,1).$\\
  \indent

  So the claim is proved.

  \item Claim 2: $A(n,l)=\frac{l!}{n!}P_{n }[0,1,2,\cdots,l]$ is true.
\vskip.2cm
By claim 1, $A(l,l)$ is true,  assume $A(n,l)$ is true for all integer $l$ and $n$ less than $l+1 $ resp $n+1$.

\begin{align*}
&A( n+1,l)\\
=&\frac{l}{n+1}(A( n,l)+A(n,l-1))\\
=&\frac{l}{n+1}(\frac{l!}{n!}P_{n }[0,1,2,\cdots,l]+\frac{(l-1)!}{n!}P_{n }[0,1,2,\cdots,l-1])\\
=&\frac{l!}{(n+1)!} \sum\limits_{\substack{1\leq i\leq n-l\\t_i\in\{0,1,2\cdots,l\}\\t_1\leq t_2\leq \cdots\leq t_{n-l}}}
 t_1\cdots t_{n-l}l+\frac{l!}{(n+1)!}\sum_{\substack{1\leq i\leq n+1-l\\t_i\in\{0,1,2\cdots,l-1\}\\t_1\leq t_2\leq \cdots\leq t_{n+1-l}}}
 t_1\cdots t_{n+1-l}\\
  =&\frac{l!}{(n+1)!}
\{ \sum\limits_{\substack{1\leq i\leq n-l\\t_i\in\{0,1,2\cdots,l\}\\t_1\leq t_2\leq \cdots\leq  t_{n+1-l}=l}}
 t_1\cdots t_{n-l}t_{n+1-l}+\sum_{\substack{1\leq i\leq n+1-l\\t_i\in\{0,1,2\cdots,l \}\\t_1\leq t_2\leq \cdots\leq  t_{n+1-l}< l}} t_1\cdots t_{n+1-l}\}\\
  =&\displaystyle\frac{l!}{(n+1)!}\sum\limits_{\substack{t_i\in\{1,2\cdots,l\}\\t_1\leq t_2\leq \cdots\leq  t_{n+1-l}\leq l}} t_1\cdots t_{n-l}t_{n+1-l}\\
 =&\frac{l!}{(n+1)!}P_{n+1}[0,1,2,\cdots,l].
\end{align*}
\end{itemize}

\end{proof}

\section{Some combinatorial properties}

Denote $a_i(n)$ the coefficient in dyadic expansion of $n$:
$$
n=a_{0}(n)+a_{1}(n)2^{1}+a_{2}(n)2^{2}+\cdots+a_{i}(n)2^{i},
$$
where $i\in \Z$ must be finite integer.


The following properties is based on the Lucas Theorem.

\begin{proposition}\label{18}
For $m, n\in \N$, $$\binom{4m+n} {4m}\equiv \binom{m+[\frac{n}{4}]} {m}\ \mod\ 2. $$
\end{proposition}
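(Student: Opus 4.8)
The plan is to reduce the statement to the Lucas Theorem stated above, used in its binary form: for nonnegative integers $a=\sum_i a_i(a)2^i$ and $b=\sum_i a_i(b)2^i$ one has $\binom{a}{b}\equiv\prod_i\binom{a_i(a)}{a_i(b)}\pmod 2$, with each factor read via $\binom{0}{0}=\binom{1}{0}=\binom{1}{1}=1$ and $\binom{0}{1}=0$ (so that a factor $0$ automatically accounts for the vanishing case). The whole argument is then bookkeeping with dyadic digits plus two applications of this.

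First I would write $n=4q+s$ with $q=[\tfrac n4]$ and $s\in\{0,1,2,3\}$, so that $4m+n=4(m+q)+s$. The key observation is that multiplication by $4=2^2$ merely shifts a dyadic expansion up by two places: $a_0(4k)=a_1(4k)=0$ and $a_i(4k)=a_{i-2}(k)$ for $i\ge 2$. Since $s<2^2$, the remainder occupies only the two lowest bits and produces no carry into the higher ones; hence $a_0$ and $a_1$ of $4(m+q)+s$ are exactly the two dyadic digits of $s$, while $a_i\!\left(4(m+q)+s\right)=a_{i-2}(m+q)$ for $i\ge 2$. Similarly $a_0(4m)=a_1(4m)=0$ and $a_i(4m)=a_{i-2}(m)$ for $i\ge 2$.

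Now I would apply the Lucas Theorem to $\binom{4m+n}{4m}=\binom{4(m+q)+s}{4m}$. The two factors at positions $i=0,1$ are $\binom{a_0(s)}{0}$ and $\binom{a_1(s)}{0}$, each equal to $1$. The product over the remaining positions $i\ge 2$ is
\[
\prod_{i\ge 2}\binom{a_{i-2}(m+q)}{a_{i-2}(m)}=\prod_{j\ge 0}\binom{a_j(m+q)}{a_j(m)}\equiv\binom{m+q}{m}\pmod 2,
\]
again by the Lucas Theorem (applied to $\binom{m+q}{m}$, which is an ordinary binomial coefficient since $m,q\ge 0$). Combining the two pieces gives $\binom{4m+n}{4m}\equiv\binom{m+[\tfrac n4]}{m}\pmod 2$, which is the claim.

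There is no genuine obstacle here: the only point requiring care is that the remainder $s=n-4[\tfrac n4]$ lies strictly below $2^2$, so the rewriting $4m+n=4(m+q)+s$ is ``digit-clean'' — no carry mixes the two low bits with the shifted high block — and the digits of $4m$, $4(m+q)+s$ line up block by block. Everything else is immediate from the digit form of the Lucas Theorem.
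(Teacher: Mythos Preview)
Your proof is correct and takes a genuinely different route from the paper's. The paper argues case by case on the residue of $n$ modulo $4$; for the representative case $n=4k$ it factors
\[
\binom{4m+4k}{4m}=\frac{(4m+4k-1)!!}{(4m-1)!!(4k-1)!!}\cdot\frac{(2m+2k-1)!!}{(2m-1)!!(2k-1)!!}\cdot\binom{m+k}{m},
\]
observes that the double-factorial ratios contribute only odd factors, and concludes the congruence; the remaining residues are declared ``similar''. Your approach instead applies the binary Lucas Theorem directly: writing $n=4q+s$ with $s<4$, the two lowest binary digits of $4m$ are zero so the corresponding Lucas factors are trivially $1$, and the shift by two bits reduces the remaining product to $\binom{m+q}{m}$. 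This handles all four residues $s\in\{0,1,2,3\}$ uniformly in a single stroke, avoids the double-factorial bookkeeping, and makes transparent \emph{why} the result holds (it is just the base-$2$ self-similarity encoded in Lucas). The paper's approach, by contrast, is more hands-on and does not invoke Lucas at this step, at the cost of needing separate cases.
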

\begin{proof}
If $n\equiv 0\ \mod\ 4$. Assume $n=4k$,
\begin{equation*}
\begin{aligned} 
\binom{4m+4k} {4m}
&=\frac{(4m+4k-1)!!}{(4m-1)!!(4k-1)!!}\frac{(4m+4k)!!}{(4k)!!(4m)!!}\\
&=\frac{(4m+4k-1)!!}{(4m-1)!!(4k-1)!!}\frac{(2m+2k)!}{(2k)!(2m)!}\\
&=\frac{(4m+4k-1)!!}{(4m-1)!!(4k-1)!!}\frac{(2m+2k-1)!!}{(2m-1)!!(2k-1)!!}\frac{(2m+2k)!!}{(2k)!!(2m)!!}\\
&=\frac{(4m+4k-1)!!}{(4m-1)!!(4k-1)!!}\frac{(2m+2k-1)!!}{(2m-1)!!(2k-1)!!}\frac{(m+k)!}{k!m!}\\
&\equiv \binom{m+k} {m}\ \mod\ 2.
\end{aligned}
\end{equation*}
For the rest dimensions, the analysis is similar.

\end{proof}

\begin{proposition}\label{11}
For $m, n\in \N$,
\begin{equation*}
\binom{4m+1+n}  {4m+1}\equiv
\begin{cases}
0\ \ \ \mod\ 2, & n \ \text{is odd};\\
\binom{m+[\frac{n}{4}]} {m}\ \ \ \mod\ 2,&  n\  \text{is even}.
\end{cases}
\end{equation*}
\end{proposition}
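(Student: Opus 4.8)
The plan is to derive both cases from the Lucas Theorem together with Proposition \ref{18}. The key elementary observation is that, since $4\mid 4m$, the integer $4m$ has vanishing two lowest dyadic digits, $a_0(4m)=a_1(4m)=0$; consequently adding $1$ produces no carry into the higher digits, so $a_0(4m+1)=1$, $a_1(4m+1)=0$, and $a_{i+2}(4m+1)=a_i(m)$ for all $i\geq 0$. This description of the dyadic digits of $4m+1$ is all that the Lucas Theorem needs.

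First I would treat the case $n$ odd. Then $4m+1+n$ is even, so $a_0(4m+1+n)=0$, which is strictly less than $a_0(4m+1)=1$. Since some digit of the ``top'' is smaller than the corresponding digit of the ``bottom'', the Lucas Theorem forces $\binom{4m+1+n}{4m+1}\equiv 0\ \mod\ 2$, which is the asserted value.

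For $n$ even I would instead apply Pascal's identity in the form $\binom{4m+1+n}{4m+1}=\binom{4m+n}{4m}+\binom{4m+n}{4m+1}$ and show the last term is even. Indeed, $4m+n$ has the same parity as $n$, so $a_0(4m+n)=a_0(n)=0<1=a_0(4m+1)$, and the Lucas Theorem again gives $\binom{4m+n}{4m+1}\equiv 0\ \mod\ 2$. Hence $\binom{4m+1+n}{4m+1}\equiv\binom{4m+n}{4m}\ \mod\ 2$, and Proposition \ref{18} rewrites the right-hand side as $\binom{m+[\frac n4]}{m}\ \mod\ 2$, completing the proof. All edge cases, e.g. $n=0$ or $m=0$, are covered by the usual convention $\binom ab=0$ for $b>a\geq 0$.

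There is no real obstacle here beyond keeping the digit bookkeeping for $4m+1$ straight; the one point to be careful about is precisely the absence of carries when passing from $4m$ to $4m+1$, which is immediate from $4\mid 4m$. A more pedestrian alternative would expand $\binom{4m+1+n}{4m+1}$ directly through the Lucas Theorem, splitting into $n\equiv 0,2\ \mod\ 4$ and tracking the carries generated by adding $n$ to $4m+1$ in binary; the Pascal shortcut above is preferable because it lets Proposition \ref{18} absorb that carry analysis.
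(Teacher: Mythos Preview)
Your proof is correct. The odd case is essentially the same as the paper's (both observe via Lucas that the lowest binary digit obstructs), but your treatment of the even case is genuinely different and in fact more efficient. The paper splits $n$ even into the subcases $n=4k$ and $n=4k+2$: for $n=4k$ it uses the symmetry $\binom{4m+1+4k}{4m+1}=\binom{4k+(4m+1)}{4k}$ and then invokes Proposition~\ref{18} with the roles of the two arguments swapped; for $n=4k+2$ it performs a direct double-factorial manipulation to extract $\binom{m+k}{m}$. Your Pascal step $\binom{4m+1+n}{4m+1}=\binom{4m+n}{4m}+\binom{4m+n}{4m+1}$, together with the Lucas observation that the second summand is even whenever $n$ is even, reduces both subcases to a single application of Proposition~\ref{18} and avoids the factorial computation entirely. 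The paper's route has the minor advantage of being self-contained at the level of explicit arithmetic, but your reduction is cleaner and makes the dependence on Proposition~\ref{18} more transparent.
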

\begin{proof} When $n,4m+1$ are both odd,  $a_{0}(m)=a_{0}(n)=1$, thus $\binom{4m+1+n}{n}$ is even.

When $n=4k$,  $$\binom{4m+1+n} {4m+1}=\binom{4k+4m+1} {4k}\equiv \binom{m+k} {k}\equiv \binom{m+k} {m}\ \mod\ 2.$$

When $n=4k+2$,
\begin{equation*}
\begin{aligned}
\binom{4m+1+4k+2} {4m+1}
=& \frac{(4m+1+4k+2)!!}{(4m+1)!!(4k+1)!!}\frac{(4m+4k+2)!!}{(4m)!!(4k+2)!!}\\
=& \frac{(4m+1+4k+2)!!}{(4m+1)!!(4k+1)!!}\frac{(2m+2k+1)! }{(2m)!(2k+1)! }\\
=& \frac{(4m+1+4k+2)!!}{(4m+1)!!(4k+1)!!}\frac{(2m+2k+1)!! }{(2m-1)!!(2k+1)!! }\frac{(m+k )! }{(m)!(k )! }\\
\equiv & \binom{m+k} {m}\ \mod\ 2.
\end{aligned}
\end{equation*}

\end{proof}

\begin{corollary}\label{8}
For $m, n\in \N$,
\begin{equation*}
\binom{m+n}  {n}\equiv
\begin{cases}
0\ \ \ \mod\ 2, & n\cdot m\ \text{or}\ [\frac{n}{2}]\cdot[\frac{m}{2}] \text{is odd};\\
\binom{[\frac{n}{4}]+[\frac{m}{4}]} {[\frac{n}{4}]}\ \ \ \mod\ 2,&  \text{otherwise}.
\end{cases}
\end{equation*}
\end{corollary}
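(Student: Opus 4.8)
The plan is to treat separately the two lines of the congruence in Corollary~\ref{8}, the first being immediate from parity bookkeeping and the second following from Propositions~\ref{18} and~\ref{11}.

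\emph{The vanishing line.} Observe that $n\cdot m$ is odd precisely when $a_0(m)=a_0(n)=1$, and $[\frac n2]\cdot[\frac m2]$ is odd precisely when $a_1(m)=a_1(n)=1$; in either case there is an index $i\in\{0,1\}$ with $a_i(m)+a_i(n)=2$. By the oddness criterion recorded in the main text just after Lucas' Theorem (a binomial $\binom{m+n}{n}$ is odd iff $a_i(m)+a_i(n)\le 1$ for all $i$, i.e. iff adding $m$ and $n$ in base $2$ is carry-free), $\binom{m+n}{n}$ is then even. This gives the first line.

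\emph{The generic line.} Assume from now on that $a_0(m)a_0(n)=0$ and $a_1(m)a_1(n)=0$; the claim is $\binom{m+n}{n}\equiv\binom{[\frac m4]+[\frac n4]}{[\frac n4]}\pmod 2$, and I would prove it by a case split on $m \bmod 4$, using $\binom{m+n}{n}=\binom{m+n}{m}$ freely and $\binom{a+b}{a}=\binom{a+b}{b}$ to put each answer into the stated form. If $4\mid m$, Proposition~\ref{18} with $m$ replaced by $\frac m4$ applies directly. If $m\equiv1\pmod4$, the hypothesis forces $a_0(n)=0$, i.e. $n$ is even, and Proposition~\ref{11} applied to $\binom{m+n}{m}=\binom{4[\frac m4]+1+n}{4[\frac m4]+1}$ yields $\binom{[\frac m4]+[\frac n4]}{[\frac m4]}$. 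If $m\equiv3\pmod4$, the hypothesis forces $a_0(n)=a_1(n)=0$, i.e. $4\mid n$, and Proposition~\ref{18} applied to $\binom{m+n}{m}=\binom{4(\frac n4)+m}{4(\frac n4)}$ finishes it. The case $m\equiv2\pmod4$ forces $a_1(n)=0$, hence $n\equiv0$ or $1\pmod4$: for $4\mid n$ use Proposition~\ref{18} on $\binom{m+n}{m}$, and for $n\equiv1\pmod4$ use Proposition~\ref{11} on $\binom{m+n}{n}=\binom{4[\frac n4]+1+m}{4[\frac n4]+1}$, which is legitimate since $m$ is even.

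The main obstacle is not any one computation but keeping the case analysis honest: one must verify that the generic hypothesis restricts the residue of $n$ modulo $4$ exactly enough for Propositions~\ref{18} and~\ref{11} to apply verbatim, and the residue class $m\equiv2\pmod4$ is the one that genuinely needs the further subdivision according to $n\bmod4$. It is also worth checking the degenerate inputs $m=0$ or $n=0$: these fall under the generic line, where the formula reads $\binom{[\frac m4]+[\frac n4]}{[\frac n4]}=1=\binom{m+n}{n}$, as required.
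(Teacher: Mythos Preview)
Your proposal is correct and follows essentially the same approach as the paper's own proof: first dispatch the vanishing line via the carry-free criterion from Lucas' Theorem, then treat the generic line by a residue-class case split invoking Propositions~\ref{18} and~\ref{11}. The only cosmetic difference is that you split on $m\bmod 4$ while the paper splits on $n\bmod 4$, which is immaterial by the symmetry $\binom{m+n}{n}=\binom{m+n}{m}$.
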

\begin{proof} When $n\cdot m$ is odd,$n,m$ are both odd, and since $a_{0}(m)=a_{0}(n)=1$, $\binom{m+n}{n}$ is even.
\\
\indent
When $[\frac{n}{2}][\frac{m}{2}]$ is odd, $n,m$ are both one of element in$\{4l_{1}+2,4l_{2}+3\}$, where $l_{1},l_{2}\in \N$. Since  $a_{1}(n)=a_{1}(m)=1$. Thus $\binom{m+n}{n}$ is even.
Otherwise
\begin{itemize}
  \item If $ n=4k$,$ \binom{m+n} {n}=\binom{4k+m} {m}\equiv \binom{k+[\frac{m}{4}]} {[\frac{m}{4}]}\equiv \binom{[\frac{m}{4}]+[\frac{n}{4}]} {[\frac{n}{4}]}\ \mod\ 2$.
  \item If $ n=4k+1$ and we only need to consider $m$  is even, since $n,m$ are both odd  discussed about above.
  \\$ \binom{m+n} {n}=\binom{4k+1+m} {m}\equiv \binom{[\frac{m}{4}]+[\frac{n}{4}]} {[\frac{n}{4}]}\ \mod\ 2$ by  proposition \ref{11}.
  \item If $ n=4k+2$ and we only need to consider $m=4l$ or $4l+1$, since the cases $n,m$ are elements in$\{4k_{1}+2,4k_{2}+3\}$ have been discussed about above.
   By  proposition \ref{11} and \ref{18}.

  $\displaystyle \binom{m+n} {n}=\binom{m+n} {m}=\binom{4k+2+4l} {4l} \equiv \binom{k+l} {k}\ \mod\ 2.$

  $\displaystyle \binom{m+n} {n}=\binom{m+n} {m}=\binom{4k+2+4l+1} {4l+1}\equiv \binom{k+l} {k}\ \mod\ 2.$
  \item If $ n=4k+3$ and we only need to consider $m=4l$. By proposition \ref{11},

  $\displaystyle \binom{m+n} {n}=\binom{m+n} {m}=\binom{4k+3+4l} {4l}\equiv \binom{k+l} {k}\ \mod\ 2.$
\end{itemize}
\end{proof}


\bibliographystyle{amsalpha}

\end{document}